\newtheorem{theorem}{Theorem}[section]
\newtheorem{lemma}[theorem]{Lemma}
\newtheorem{proposition}[theorem]{Proposition}
\newtheorem{corollary}[theorem]{Corollary}
\theoremstyle{definition}
\newtheorem{example}[theorem]{Example}
\newtheorem{notation}[theorem]{Notation}
\theoremstyle{remark}
\newtheorem{remark}[theorem]{Remark}
\numberwithin{equation}{section}
\newcommand{\bP}{\mathbb{P}}
\newcommand{\bZ}{\mathbb{Z}}
\newcommand{\bC}{\mathbb{C}}
\newcommand{\bQ}{\mathbb{Q}}
\newcommand{\calO}{\mathscr{O}}
\newcommand{\calE}{\mathcal{E}}
\newcommand{\calB}{\mathcal{B}}
\newcommand{\calD}{\mathcal{D}}
\newcommand{\calH}{\mathcal{H}}
\newcommand{\gquot}{/\!\!/}
\DeclareMathOperator{\Pic}{Pic}
\DeclareMathOperator{\Proj}{Proj}
\DeclareMathOperator{\Chow}{Chow}
\DeclareMathOperator{\rank}{rank}
\DeclareMathOperator{\im}{Im}
\DeclareMathOperator{\divisor}{div}
\DeclareMathOperator{\SL}{SL}
\DeclareMathOperator{\SU}{SU}
\begin{document}
\title[Ball quotient for genus $4$]{The geometry of the ball quotient model of the moduli space of genus four curves}

\author[S. Casalaina-Martin]{Sebastian Casalaina-Martin}
\address{University of Colorado at Boulder, Department of Mathematics,   Boulder, CO 80309}
\email{casa@math.colorado.edu}
\thanks{The  first author was partially supported by NSF grant DMS-1101333.}

\author[D. Jensen]{David Jensen}
\address{Stony Brook University, Department of Mathematics,  Stony Brook, NY 11794}
\email{djensen@math.sunysb.edu}

\author[R. Laza]{Radu Laza}
\address{Stony Brook University, Department of Mathematics,  Stony Brook, NY 11794}
\email{rlaza@math.sunysb.edu}
\thanks{The  third author was partially supported by NSF grant DMS-0968968 and a Sloan Fellowship.}

\subjclass{Primary 14H10, 14H45, 14H15; Secondary 14L24, 14E30, 11F03}
\keywords{Hassett--Keel program, genus $4$ curves, ball quotients}

\bibliographystyle{amsalpha}
\begin{abstract}
S. Kondo has constructed a ball quotient compactification   for the moduli space of non-hyperelliptic genus four curves. In this paper, we show that this space essentially coincides with a GIT  quotient of the Chow variety of  canonically embedded genus four curves.   More specifically, we give an explicit description of this GIT quotient, and show that the birational map from this space  to Kondo's space is resolved by the blow-up of a single point.   This provides a modular interpretation of the points in the boundary of Kondo's space.  Connections with the slope nine space in the Hassett-Keel program are also discussed.
\end{abstract}

\maketitle

\section*{Introduction}
Kondo \cite{k2} has constructed a ball quotient compactification  $(\mathcal B_9/\Gamma)^*$ of the moduli space of non-hyperelliptic genus four curves. In this paper, we discuss the relationship between this space and a GIT model of $\overline M_4$, the moduli space of genus four, Deligne-Mumford stable curves. To be precise, we construct a GIT quotient  $\overline M_4^{GIT}$ of canonically embedded genus four curves via a related GIT problem for cubic threefolds.  Results for cubic threefolds due to Allcock \cite{allcock1} allow us to  completely describe the stability conditions for $\overline M_4^{GIT}$ (Theorem \ref{thmcubic}).  With this, we can employ general results of Looijenga \cite{looijengacompact1} to give an explicit resolution of the period map $\overline M_4^{GIT} \dashrightarrow (\mathcal B_9/\Gamma)^*$ (Theorem \ref{mainthm}).   In addition, we identify $\overline M_4^{GIT}$ with a GIT quotient of the Chow variety of canonically embedded genus four curves 
(Theorem \ref{chowcubic}).   Some connections to the Hassett-Keel program are discussed in section \ref{sectHK}. In particular, we identify $\overline M_4^{GIT}$ with $\overline M_4(5/9)$, providing a modular interpretation for the slope $9$ log canonical model of $\overline M_4$ (Theorem \ref{thmhk}).

A standard method of constructing an algebraic moduli space is via a period map. When the period domain is a Hermitian symmetric domain $\calD$, one can in some cases find a period map that is generically injective and dominant. In these situations, the Baily--Borel compactification $(\calD/\Gamma)^*$ of the associated locally symmetric variety $\calD/\Gamma$ provides a projective model for the moduli space (where $\Gamma$ is the monodromy group).    The rich geometric structure of locally symmetric varieties provides a powerful tool for the study of these moduli spaces.  The main examples are the moduli spaces of abelian varieties, where the period domain is the Siegel space, and the moduli spaces of $K3$ surfaces, where the period domain is of Type $IV$.  Moduli spaces of Hyperk\"ahler manifolds, and the moduli space of cubic fourfolds, also have period maps to Type $IV$ domains that behave similarly to period maps for $K3$ surfaces.   Using these examples, and special constructions, it is sometimes possible to find moduli spaces birational to a ball quotient (see e.g. \cite{dk}). The few ball quotient examples known in the literature are the following: $n$  (weighted) points in $\bP^1$  with $3\le n \le 12$ (\cite{delignemostow}), curves of genus $g\le 4$ (\cite{kondo,k2}), del Pezzo surfaces of degree $d\le 3$ (\cite{act0}, \cite{dvk}, \cite{looiheck}), cubic threefolds (\cite{act,ls}), and some classes of Calabi-Yau threefolds (\cite{rohde}).  In the cases of interest here, namely genus four curves and cubic threefolds, the constructions use period maps of $K3$ surfaces and  cubic fourfolds, respectively.

 In general, it is difficult to determine what geometric objects should correspond to the boundary points in a moduli space obtained  as a locally symmetric variety.  For ball quotients and quotients of Type $IV$ domains, a now standard approach to this type of problem is to use a comparison with a moduli space constructed via GIT, and there is a well developed theory that covers this (see Looijenga \cite{looijengacompact1,looijengacompact} and Looijenga--Swierstra \cite{ls2}). In the case of genus $3$ curves, where the space of plane quartics provides a natural GIT compactification, the problem was completed by Looijenga \cite{l2} and Artebani \cite{artebani}.   From this perspective, our results give in the case of genus four curves, a modular  interpretation of the boundary points  of  Kondo's ball-quotient compactification via a GIT quotient of the Chow variety of canonically embedded genus four curves.

In the opposite direction, we point out that a moduli space of varieties constructed using GIT will in general include points corresponding to schemes with complicated singularities.  However, in the special case that the GIT quotient is also a locally symmetric space, the singularities can be expected to be simple.  Indeed, typically the discriminant will be identified locally analytically with the quotient of a hyperplane arrangement by a  finite group, and consequently the monodromy of the singularities of the schemes parameterized will be finite, forcing the singularities to be mild. In particular, the list of singularities occurring in the main GIT theorem (Theorem \ref{thmcubic}) can be explained (a posteriori) by the connection to the ball quotient model (Theorem \ref{mainthm}).

Moreover, in this situation, the discriminant can be blown-up in a standard way to obtain a simple normal crossings divisor, which can allow for the resolution of the period map to moduli spaces of abelian varieties or stable curves (see e.g. \cite{cml, ade}). In the case of this paper, as in \cite{cml}, an arithmetic hyperplane arrangement associated to the discriminant in Kondo's space allows for an explicit resolution of the period map $\overline M_4^{GIT} \dashrightarrow \overline M_4$; this is related to the more general process described in \cite{ade}.

Another motivation for analyzing the geometry of Kondo's ball quotient is the connection with the Hassett-Keel program, which aims to give a modular interpretation of the canonical model of $\overline M_g$.  This connection is discussed in section \ref{sectHK} (esp. Theorem \ref{thmhk}), and will be explored in more detail in a subsequent paper.  We note that the GIT  quotient of the Chow variety is expected to play an important role, in connection with a flip of the hyperelliptic locus.

\subsection*{Outline}
The main tool we use for the analysis of genus four curves is their close relationship with cubic threefolds. Specifically, a cubic threefold with an ordinary node determines a genus $4$ curve, and conversely.  We discuss this in detail in section  \ref{sectprelim}. Thus if $\overline M_{\text{cubic}}$ is a model for the moduli space of cubic threefolds, and $\Delta$ is the discriminant hypersurface (defined as the closure of the nodal locus), then the normalization $\Delta^\nu$ provides a  birational model for $\overline{M}_4$. In section \ref{sectgit} we review the results of Allcock \cite{allcock1} on the GIT quotient for the moduli space of cubic threefolds,  and we show that there is a natural GIT quotient for cubic threefolds with a  fixed singular point $(X,p)$. This quotient (via the construction in \S \ref{sectprelim}) gives a birational model $\overline{M}_{4}^{GIT}$ of the moduli of genus four curves. We then show that this quotient actually coincides with a GIT quotient of the Chow variety $\Chow_{4,1}$ of canonically embedded genus four curves (Theorem \ref{chowcubic}).  We note that in contrast to the cases of genus $3$ curves and cubic threefolds, there exist many natural choices for constructing a GIT quotient for genus $4$ curves. However, only one choice, the space $\overline{M}_4^{GIT}$, compares well with the ball quotient $(\calB_9/\Gamma)^*$.

In section \ref{sectstab} we describe the stability for the quotient $\overline{M}_{4}^{GIT}\cong \Chow_{4,1}\gquot \SL(4)$ (Theorem \ref{thmcubic}). Then, in section \ref{sectHK}, we identify $\overline{M}_{4}^{GIT}$ with a step in the Hassett--Keel program (Theorem \ref{thmhk}). We note that a partial analysis of the GIT on $\Chow_{4,1}$ was done by H. Kim \cite{kimg4} (motivated by the Hassett--Keel program).  The approach of \cite{kimg4}  is to directly compute GIT stability conditions for Chow varieties (vs. our approach via cubic threefolds); our results agree with those of Kim.  However, to our knowledge, Theorem \ref{thmcubic} is the first complete analysis for GIT stability on $\Chow_{4,1}$, and also the first description of the Hassett--Keel space $\overline M_4\left(\frac{5}{9}\right)$. We also point out a related GIT computation (also motivated by Hassett--Keel program): GIT for genus $4$ curves viewed as $(3,3)$ curves on a smooth quadric due to Fedorchuk \cite{maksymg4}.

In section \ref{sectball}, we recall the basic results of Kondo \cite{kondo}. In addition, we discuss some arithmetic results, e.g. the Baily--Borel compactification (Theorem \ref{propbb}), regarding the ball quotient model. The main result of the section is a computation of the canonical polarization of the ball quotient using Borcherds' automorphic form (Theorem \ref{thmballpol}).  In the final section,   we prove the main result comparing the GIT quotient to the ball quotient (Theorem \ref{mainthm}).  The proof uses the general framework due to Looijenga \cite{looijengacompact1} and the key point in this context is  the computation of the correct polarizations on the two spaces (Theorem \ref{thmballpol}). Finally, we note that both the GIT and ball quotient constructions  for genus $4$ curves are compatible with those for cubic threefolds (\cite{act}). Thus, our result essentially describes the  restriction to the discriminant of the \cite{act,ls} results.

\subsection*{Acknowledgements}
We would like to thank D. Allcock, S. Kondo, B. Hassett and E. Looijenga for discussions we have had on this topic. We are especially thankful to S. Kondo who shared with us some material related to the automorphic form computations in Section \ref{sectautomorphic}.

\subsection*{Notation and conventions}
We work over the complex numbers $\mathbb C$.  All \emph{schemes} will be taken to be of finite type over $\mathbb C$.
A \emph{curve} is a reduced, connected, complete scheme of pure dimension $1$.  We use the standard $ADE$ classification of simple singularities and we will say isolated  hypersurface singularities of different dimensions are of the same type if one is a stabilization of the other (see e.g. \cite{agzv1}).
We will use the notation $M_4^{nh}$ and  $M_4^{ns}$ to denote the open subsets  of $M_4$ parameterizing smooth non-hyperelliptic curves, and smooth (Brill-Noether) non-special curves (i.e. without vanishing theta null), respectively.  $\overline{M}_4^{GIT}$ is the GIT compactification of canonical curves constructed in \S 2, and $(B_9/\Gamma)^*$ is the Baily-Borel compactification of Kondo's ball quotient model. $\Sigma$ and $V$ will denote the divisors of nodal curves and curves with vanishing theta null respectively in the $\overline{M}_4^{GIT}$ model. By abuse of notation, we will sometimes use $\Sigma$ and $V$ to denote analogous divisors on related spaces.

\section{Preliminaries on canonical genus $4$ curves and cubic $3$-folds}\label{sectprelim}
In this paper, we will be interested in a GIT quotient of the space of canonically embedded, non-hyperelliptic, genus four curves.  Such curves are the complete intersection of a quadric and cubic in $\mathbb P^3$. Although these complete intersections can be parameterized naturally by a subset of the Hilbert scheme, or Chow variety, we find it is more convenient to work with the closely related projective bundle $\mathbb PE$ parameterizing subschemes of $\mathbb P^3$ with ideal defined by a quadric and cubic (\S \ref{sectpe}; for the relation to the Hilbert scheme see \cite{rv}).  The GIT quotient we consider is induced by a GIT problem for cubic threefolds.  In \S \ref{sectsing} we review the connection between genus four canonical curves and singular cubic threefolds. Finally, in  \S\ref{sectcubicg4},  we discuss associated maps among the spaces introduced.

\subsection{Complete intersections and genus four curves}\label{sectpe}
A smooth, genus $4$, non-hyperelliptic  curve is the complete intersection  of a quadric and a cubic in $\bP^3$.  We will call a scheme (resp. complete intersection) defined by a quadric and a cubic in $\mathbb P^3$ a \emph{$(2,3)$-scheme} (resp. \emph{$(2,3)$-complete intersection}).    The parameter space for $(2,3)$-schemes is a projective bundle
$$\pi:\bP E\to \bP H^0(\bP^3,\calO_{\bP^3}(2))\cong \bP^9$$ over the space of quadrics in $\bP^3$.   In this section, we discuss the geometry of $\mathbb PE$.

The vector bundle $E$, defining $\bP E$, can be constructed in the following way.  A $(2,3)$-scheme $C$ is defined by a quadric $Q$, say given by the equation $q$, and a cubic equation $f$ defined modulo $q$. Thus, the fiber of the bundle $E$ over  a point  $[q]\in \bP^9$ will be given by the exact sequence
$$0\to H^0(\mathbb P^3,\mathscr O_{\mathbb P^3}(1))\xrightarrow{q} H^0(\mathbb P^3,\mathscr O_{\mathbb P^3}(3))\to E_q\to 0.$$
Globally, one can define the bundle $E$ via the following exact sequence on  $\mathbb P^9$,
\begin{equation}\label{defpe}
0\to \pi_{2*}\left(\mathcal{I}_{\mathcal Q}\otimes \pi_1^*\calO_{\mathbb P^3}(3)\right)\to  \pi_{2*}\left(\pi_1^*\calO_{\mathbb P^3}(3)\right)\to E\to 0,
\end{equation}
where ${\mathcal Q}\subset \bP^3\times \bP^9$ is the universal quadric, and $\pi_1$ and $\pi_2$ are the natural projections onto $\bP^3$ and $\bP^9$ respectively.

The cohomology of projective bundles is well understood. Namely, $H^*(\mathbb PE,\bZ)$ is a free module over $H^*(\bP^9,\bZ)$ with basis $1,h,\dots, h^{15}$, where $h=\calO_{\bP E}(1)$. Also, $\Pic(\mathbb PE)$ is a free, rank two $\mathbb Z$-module generated by $h$ and $\eta$, where $\eta=\pi^*\calO_{\bP^9}(1)$. We will denote by $\calO(a,b)$ the line bundle on $\bP E$ of class $a\eta+bh$.

There are some geometric subloci of $\mathbb PE$ that will be of interest.  A smooth genus four curve is said to have a vanishing theta null if its canonical model lies on a quadric cone in $\mathbb P^3$.  We set $V\subseteq \mathbb PE$, {\it the vanishing theta null locus}, to be the locus of $(2,3)$-schemes whose defining quadric is singular.   Note that this is the pull-back from $\mathbb P^9$ of the discriminant for quadric hypersurfaces in $\mathbb P^3$, and consequently, $V$ is irreducible with generic point corresponding to a $(2,3)$-complete intersection lying on a quadric cone.  We set $\Sigma\subseteq \mathbb PE$ to be {\it the discriminant locus}; that is the locus of singular $(2,3)$-schemes.    This is a divisor, and
the generic point of $\Sigma$ corresponds to a $(2,3)$-complete intersection with a unique singularity, which is a node.
In particular, the locus of $(2,3)$-schemes with a singularity worse than a node is of codimension at least two (in fact the locus of curves with $A_2$ singularities is of codimension two).

Let $\mathscr C\to \mathbb PE$ be the universal $(2,3)$-scheme over $\mathbb PE$.
Let $\mathscr C^\circ\to \mathbb PE^\circ$ be the family of stable curves; the observation above shows that the
complement of $\mathbb PE^\circ$ in $\mathbb PE$ has codimension two.  There is an induced morphism $\mathbb PE^\circ \to \overline M_4$; the pull-back of the $\lambda$ and $\delta$ class then can be extended to $\mathbb PE$ over the codimension two locus.  Slightly abusing notation, we will denote these classes on $\mathbb PE$ again by $\lambda$ and $\delta$.  Note that $\delta$ agrees with $\Sigma$.

\begin{proposition}\label{computeclasses}
Let $\eta=\pi^*\mathscr O_{\mathbb P^9}(1)$ and $h=\mathscr O_{\mathbb PE}(1)$ be the standard generators of $\Pic(\bP E)$. Then $K_{\bP E}=-14\eta-16h$ and
\begin{eqnarray*}
V&=&4\eta\\
\Sigma&=&33\eta+34h.\\
\end{eqnarray*}
We also have,
\begin{eqnarray*}
\lambda &=& 4\eta+4 h\\
\delta &=&33\eta+ 34 h,
\end{eqnarray*}
and conversely $\eta=(17/2)\lambda-\delta$ and $h=-(33/4)\lambda+\delta$.
\end{proposition}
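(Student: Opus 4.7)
The plan is to compute $V$, $K_{\bP E}$, $\lambda$, and $\Sigma=\delta$ in turn, and then invert the resulting $2\times 2$ linear system. First, $V=4\eta$ is immediate: $V$ is the preimage under $\pi$ of the degree-$4$ determinantal hypersurface $D\subset\bP^9=\bP H^0(\bP^3,\calO(2))$ parameterizing singular symmetric $4\times 4$ matrices. Next, from the defining sequence \eqref{defpe} the middle term is trivial of rank $20$ and the left-hand term equals $H^0(\bP^3,\calO(1))\otimes\calO_{\bP^9}(-1)$, so $c_1(E)=4\eta$. The standard formula for the canonical bundle of a projective bundle in the classical convention (lines in $E$) then gives $K_{\bP E}=-16h-\pi^*c_1(E)+\pi^*K_{\bP^9}=-16h-4\eta-10\eta=-14\eta-16h$.

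To compute $\lambda$, I identify $\omega_{\mathscr C/\bP E}$ by applying adjunction to the chain $\mathscr C\subset\mathcal Q\subset\bP^3\times\bP E$. The tautological inclusion $\calO_{\bP E}(-1)\hookrightarrow\pi^*E$, combined with the identification $E\cong(\pi_2)_*(\pi_1^*\calO(3)|_{\mathcal Q})$ extracted from \eqref{defpe}, produces by adjunction a tautological section of $p_1^*\calO(3)\otimes p_2^*\calO_{\bP E}(1)$ on the universal quadric $\mathcal Q$ over $\bP E$ whose vanishing locus is $\mathscr C$. Hence $\mathscr C$ has class $3H+h$ in $\mathcal Q$ (which itself has class $2H+\eta$ in $\bP^3\times\bP E$), and a double application of adjunction yields $\omega_{\mathscr C/\bP E}\cong\calO(H+\eta+h)|_{\mathscr C}$. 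By the projection formula, $\mathbb E=\pi_*\omega_{\mathscr C/\bP E}\cong\pi_*(p_1^*\calO(1)|_{\mathscr C})\otimes\calO_{\bP E}(\eta+h)$. A hyperplane section of a $(2,3)$-complete intersection in $\bP^3$ would be a $(2,3)$-complete intersection in $\bP^2$, which is $0$-dimensional by Bezout; thus no fiber $C$ of $\pi$ is contained in a hyperplane, and the restriction map $H^0(\bP^3,\calO(1))\to H^0(C,\calO(1))$ is injective on each fiber. Since $\omega_C\cong\calO(1)|_C$ gives $h^0(C,\calO(1))=p_a(C)=4$, the map is in fact an isomorphism fiberwise, so by flatness $\pi_*(p_1^*\calO(1)|_{\mathscr C})\cong\calO_{\bP E}^{\oplus 4}$. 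Consequently $\mathbb E\cong\calO_{\bP E}(\eta+h)^{\oplus 4}$ and $\lambda=c_1(\mathbb E)=4\eta+4h$.

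Finally, I apply Mumford's relation $12\lambda=\kappa+\delta$, where $\kappa=\pi_*(c_1(\omega_{\mathscr C/\bP E})^2)$. This relation holds on the open locus $\bP E^\circ$ of stable curves by pullback from $\overline M_4$, and extends uniquely to $\bP E$ since the complement has codimension at least two. Using $c_1(\omega_{\mathscr C/\bP E})=(H+\eta+h)|_{\mathscr C}$ and $[\mathscr C]=(2H+\eta)(3H+h)$ in $\bP^3\times\bP E$, I expand $(H+\eta+h)^2(2H+\eta)(3H+h)$ on $\bP^3\times\bP E$ and retain only the coefficient of $H^3$ (all other terms die under $p_{2*}$ since $\int_{\bP^3}H^i=0$ for $i\neq 3$), obtaining $\kappa=15\eta+14h$. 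Hence $\delta=12(4\eta+4h)-(15\eta+14h)=33\eta+34h$, and $\Sigma=\delta$ by the remark preceding the proposition. Solving the linear system $\lambda=4\eta+4h$, $\delta=33\eta+34h$ for $\eta$ and $h$ then gives $\eta=(17/2)\lambda-\delta$ and $h=-(33/4)\lambda+\delta$. The main technical point to verify carefully is that $\pi_*(p_1^*\calO(1)|_{\mathscr C})$ is locally free and trivial on all of $\bP E$, not merely over the locus of smooth curves; this rests on the constancy of $h^0(\omega_C)=p_a(C)=4$ for every $(2,3)$-complete intersection combined with the flatness of $\pi$.
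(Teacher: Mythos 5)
Your computation is correct, but the route is genuinely different from the one in the paper. The paper's proof is a test-curve argument: it fixes a general quadric (resp.\ cubic) surface and takes a general pencil of cubics (resp.\ quadrics), producing two curves in $\bP E$ dual to $h$ and $\eta$; the intersection numbers with $\lambda$ are read off from a standard Lefschetz-pencil formula (citing Harris--Morrison), and those with $\delta$ from a topological Euler characteristic count. You instead work directly on the universal $(2,3)$-scheme $\mathscr C\subset \bP^3\times\bP E$: after identifying its class as $(2H+\eta)(3H+h)$ and applying adjunction twice to get $\omega_{\mathscr C/\bP E}=\calO(H+\eta+h)|_{\mathscr C}$, you compute $\lambda$ via the projection formula (using that $\pi_*(\calO(H)|_{\mathscr C})$ is trivial of rank $4$, i.e.\ that canonical genus-$4$ curves are linearly normal) and then deduce $\delta$ from $\kappa$ via Mumford's relation $12\lambda=\kappa+\delta$, justified by extension across the codimension-$\geq 2$ locus $\bP E\setminus\bP E^\circ$. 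Both methods are standard; yours is more self-contained and also produces $\kappa$, at the cost of having to verify the adjunction identities and the extension of Mumford's relation, while the paper's test-curve approach is shorter once one accepts the cited formulas.

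One small point worth tightening: there are points of $\bP E$ over which $q$ and a representative $f$ share a linear factor, and there the fiber of $\mathscr C$ is a surface rather than a curve, so the pushforward $\pi_*(\calO(H)|_{\mathscr C})$ need not be locally free and $\omega_{\mathscr C/\bP E}$ is not the relative dualizing sheaf of a family of curves. This locus lies in the preimage of the rank-$\leq 2$ quadrics and has codimension $\geq 3$, so it is harmless for all the $c_1$ and divisor-class computations; but the cleanest phrasing is to carry out the argument over the open complement of this locus and then use that line bundles (and $\Pic(\bP E)$) are insensitive to removing a codimension-$\geq 2$ closed set, rather than asserting $\mathbb E\cong\calO(\eta+h)^{\oplus 4}$ globally on $\bP E$.
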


\begin{proof}
The computation of the canonical class of a projective bundle is standard.  The locus $V$ is the pull-back of the discriminant for quadric surfaces in $\mathbb P^3$, which has degree four.  The remaining classes can be computed with test curves.  For instance, one can fix a general quadric surface, and consider a general pencil of   cubics.  Or alternatively, one can fix a general cubic surface, and consider a general pencil of quadrics.  The classes of these two test curves are dual to the classes of $h$ and $\eta$ respectively.  Computing $\lambda$ and $\delta$ on these test curves is standard.  See for instance Harris--Morrison \cite[p.170-171]{hm} for the $\lambda$ class. The $\delta$ class can be computed easily, using for instance the standard method of topological Euler characteristics for Lefschetz pencils of curves on a smooth surface.
\end{proof}

\subsection{Cubic threefolds and genus four curves}\label{sectsing}
We begin by reviewing the following well known construction. Given a hypersurface $X\subset \mathbb P^n$ of degree $d$ with a singularity of multiplicity $d-1$ at the point $p=(1,0,\ldots,0)$, an equation for $X$ can be written as
$$
x_0q(x_1,\ldots,x_n)+f(x_1,\ldots,x_n)
$$
with $q$ and $f$ homogeneous of degrees $d-1$ and $d$ respectively. The ideal $(q,f)$ defines a scheme $Y\subseteq \mathbb P^{n-1}$ of type $(d-1,d)$. Conversely, given a complete intersection $Y\subseteq \mathbb P^{n-1}$ of type $(d-1,d)$ together with  a choice of generators $q$ and $f$ of the defining ideal, there is a hypersurface $X\subset \mathbb P^n$ of degree $d$ with a singularity of multiplicity $d-1$ at the point $(1,0,\ldots,0)$ defined by the equation  $x_0q+f$. In particular, in the case of cubic threefolds, setting $D_0\subseteq \mathbb PH^0(\mathbb P^4,\mathscr O_{\mathbb P^4}(3))$ to be the subset of cubic polynomials that are singular at $p$,
 we get an isomorphism
\begin{equation}\label{map1}
D_0\cong V_2 \times V_3,
\end{equation}
where $V_i=H^0(\mathbb P^3,\mathscr O_{\mathbb P^3}(i))$ for $i=2,3$
denotes the vector spaces of quadrics and cubics in $\mathbb P^3$ respectively.

It is also convenient to have the following more coordinate free description of this relationship.  Given a hypersurface $X$ as above, consider the projection from $p=(1,0,\ldots,0)$; this gives a dominant birational map $$\pi_p:X\dashrightarrow \mathbb P^{n-1}=V(x_0)\subset \mathbb P^n$$ given by $(x_0:\ldots:x_n)\mapsto (0:x_1:\ldots:x_n)$.  Since the projection $\mathbb P^n\dashrightarrow \mathbb P^{n-1}$ is resolved by blowing up the point $p$, the same is true for the map from $X$.  Clearly the exceptional locus of the map $\widetilde X:=\operatorname{Bl}_pX\to \mathbb P^{n-1}$ is the proper transform of the lines lying on $X$ passing through $p$ and one can check  that this is the cone over $Y=V(q,f)\subseteq V(x_0)=\mathbb P^{n-1}$ (see e.g \cite[Lem. 6.5]{cg} or \cite[Lem. 1.5]{ak77}).

We now  recall the well known connection between the singularities of $X$ and the singularities of $Y$.  First observe that if $p'\in X$ is a singular point other than $p$, then since $\operatorname{mult}_pX=d-1$, it follows that the line $\overline {pp'}$ is contained in $X$.  Thus for every singular point $p'\ne p\in X$, we have $\pi_p(p')\in Y$.
Now fix $y\in Y=V(q)\cap V(f)\subseteq V(x_0)=\mathbb P^{n-1}$.
The following are well known, and elementary to check (e.g. \cite{wallsextic}):
\begin{itemize}
\item[i)] If $Y$ is smooth at $y$, $X$ is smooth along the line $\overline{py}$  except at $p$.
\item[ii)] If $Y$ has a singularity at $y$ and $V(q)$ is smooth at $y$, $X$ has exactly two singular points $p,p'$ on the line $\overline{py}$.  Moreover, if $Y$ has a singularity of type $T$ at $y$, the singularity of $X$ at $p'$ has type $T$.
\item[iii)] If $Y$ has a  singularity  at $y$, $V(q)$ is singular at $y$,  and $V(f)$ is smooth at $y$, the only singularity of  $X$ along $\overline{py}$ is at $p$.  Moreover, if $Y$ has a singularity of type $T$ at $y$, $\textnormal{Bl}_pX$ has a singularity at of type $T$ at $y$ (where we have identified the exceptional divisor with $V(x_0)$).
\item[iv)] If $V(q)$ and $V(f)$ are both singular at $y$, $X$ is singular along $\overline{py}$. \end{itemize}
It follows that
\emph{if $X$ has only isolated singularities, then the  singularities of $\operatorname{Bl}_pX$  are in one-to-one correspondence, including the type, with the singularities of $Y$}.

\begin{remark} If $y\in V(q,f)$ is a singular point of a complete intersection, then $y$ is a hypersurface singularity  if and only if $V(q)$ and $V(f)$ are not both  singular at $y$; thus the comparison of types above is well defined using the stabilization of singularities. \end{remark}

 Using the classification of singularities (esp. \cite[\S15, \S16]{agzv1}), it is possible to make stronger statements in our situation. Namely, we have the following consequences for cubic threefolds with mild singularities, established in \cite{cml}.  Recall our convention that a curve is reduced, but possibly irreducible.

\begin{proposition}[{\cite[\S3]{cml}}] \label{comparesing}
Let $q(x_1,\ldots,x_4)$ (resp. $f(x_1,\ldots,x_4)$) be a homogeneous quadric (resp. cubic) polynomial on $\mathbb P^3$.
Set $X=V(x_0q+f)\subseteq \mathbb P^4$ and $C=V(q,f)\subseteq \mathbb P^3$.  Then $X$ has isolated singularities if and only if $C$ is a   curve with at worst hypersurface singularities.   Assuming either of these equivalent conditions hold:

\begin{enumerate}
\item The singularities of $\operatorname{Bl}_pX$, the blow-up of $X$ at $p=(1,0,0,0,0)$, are in one-to-one correspondence with the singularities of $C$, including the type. Note that if $p$ is a singularity of type $A_k$ , then $\operatorname{Bl}_pX$ has a unique singular point along the exceptional divisor, which is of type $A_{k-2}$ (smooth for $k\le 2$). Similarly, if $p$ is of type $D_4$ there are exactly three singular points of type $A_1$ along the exceptional divisor.

\item  The singularities of $X$ are at worst of type  $A_k$, $k\in \mathbb N$, $D_4$ if and only if the singularities of $C$ are at worst of  type $A_k$, $k\in \mathbb N$, $D_4$ and either $Q$ is irreducible, or $Q$ is the union of two distinct planes and $C$ meets the singular line of $Q$ in three distinct points. Moreover, under either of these equivalent conditions, the  singularity of $X$ at $p$ is of type:
\begin{enumerate}
\item $A_1$ if and only if $Q$ is a smooth quadric;
\item $A_2$ if and only if $Q$ is a quadric cone and $C$ does not pass through the vertex;
\item $A_k$ ($k\ge 3$) if and only if $Q$ is a quadric cone, $C$ passes through the vertex $v$, and the singularity of $C$ at $v$ is of type $A_{k-2}$;
\item $D_4$ if and only if $Q$ is the union of two distinct planes and $C$ meets the singular line of $Q$ in three distinct points.
\end{enumerate}
\end{enumerate}
\end{proposition}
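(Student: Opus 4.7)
The plan is to build directly on the four-case local analysis (i)--(iv) preceding the proposition, carrying out an $ADE$ normal-form computation only at the point $p$ itself and along the exceptional divisor of $\operatorname{Bl}_p X$; everything else reduces to the preliminary dictionary already stated.

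For the opening equivalence, observe that by (iv), $X$ is non-isolated along the entire line $\overline{py}$ whenever $V(q)$ and $V(f)$ are simultaneously singular at a common point $y$, which is precisely the condition for $C = V(q) \cap V(f)$ to fail to be a curve with only hypersurface singularities. Conversely, if $C$ is such a curve, then (i)--(iii) show that every singularity of $X$ other than $p$ is isolated, so $X$ has only isolated singularities.

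For part (1), cases (ii) and (iii) directly provide a type-preserving bijection between singularities of $C$ and singularities of $\operatorname{Bl}_p X$ away from the exceptional divisor over $p$. What remains is the local analysis along that exceptional divisor when $p$ itself is a singular point of $X$. Working in an affine chart of $\operatorname{Bl}_p \bP^4$, the strict transform of $X$ meets the exceptional $\bP^3$ along $V(q) \subset \bP^3$, so its singularities on the exceptional divisor reduce to those of the blow-up at the origin of the affine hypersurface cut out by $q(u) + x_0 f(u) + \ldots$ . Using the standard normal forms from \cite[\S15,\S16]{agzv1}, an $A_k$ singularity of $X$ at $p$ blows up to a single $A_{k-2}$ point on the exceptional divisor, and a $D_4$ blows up to three distinct $A_1$ points, giving (1).

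For part (2), the central observation is that the quadratic part of $x_0 q + f$ at $p$ is determined by $q$: on the tangent hyperplane to $\bP^4$ at $p$, which we identify with $V(x_0)=\bP^3$, the corank of the Hessian of $X$ at $p$ equals the corank of $q$ as a quadratic form. If $Q$ is smooth, then $p$ is a Morse point and hence a node, giving (a). If $Q$ is a quadric cone with vertex $v$, the Splitting Lemma reduces the local equation at $p$ to $x_1^2 + x_2^2 + x_3^2 + g(x_0, x_4)$; the remainder $g$ is determined by the restriction of $f$ to the line spanned by $p$ and $v$ together with the local equation of $C$ at $v$, and items (b), (c) follow by matching against the $A_k$ normal form. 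If $Q$ is the union of two distinct planes with singular line $L$, a similar splitting reduces $X$ near $p$ to a three-variable hypersurface singularity with quadratic part $\ell_1 \ell_2$, which is of type $D_4$ precisely when $C \cap L$ consists of three distinct points; the converse direction and the exclusion of the remaining configurations of $Q$ (double plane or worse, additional tangency along $L$) follow by noting that any further degeneration produces a singularity of $X$ outside the $ADE$-mild list.

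The main technical obstacle is the case analysis in part (2), specifically verifying the normal-form reductions and ensuring that the singularity of $X$ at $p$ in the $D_4$ case is exactly $D_4$ rather than a deeper $D_k$ or a non-simple singularity, and that the recursive identification in the $A_k$-case (for $k \ge 3$) matches precisely with $C$ having an $A_{k-2}$ singularity at the vertex of $Q$. The rest amounts to careful bookkeeping on (i)--(iv) and the $ADE$ classification.
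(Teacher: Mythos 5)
The paper does not prove Proposition~\ref{comparesing}; it is quoted directly from \cite[\S3]{cml}, with the items (i)--(iv) preceding it supplying the intended dictionary. So there is no ``paper's own proof'' to match your attempt against, only the framework the paper sets up. Your outline does follow that framework, and the architecture (reduce to (i)--(iv), then do a normal-form computation at $p$ itself) is the right one. That said, let me flag two places where the write-up would not survive being turned into a real proof.

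First, your reading of case (iii) is backwards. You assert that ``(ii) and (iii) directly provide a type-preserving bijection between singularities of $C$ and singularities of $\operatorname{Bl}_pX$ \emph{away from} the exceptional divisor.'' But (iii) says the opposite: when $V(q)$ is singular at $y$ (and $V(f)$ is not), the only singularity of $X$ on $\overline{py}$ is at $p$, and the blow-up $\operatorname{Bl}_pX$ acquires a singularity of the same type as $C$ \emph{on the exceptional divisor} at $y$. So (ii) accounts for the singularities of $\operatorname{Bl}_pX$ off the exceptional locus, (iii) accounts for those on it, and the remaining ``Note'' about blowing up an $A_k$ or $D_4$ point $p$ is then a consistency check rather than a separate source of singular points. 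As written, your plan for Part (1) risks double-counting the exceptional-divisor singularities or, worse, treating the conclusion of (iii) as something still to be proved.

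Second, in Part (2) the splitting-lemma form should have a one-variable remainder, not $g(x_0,x_4)$: working in the affine chart $x_0=1$ near $p$, the local equation is $q(x_1,\dots,x_4)+f(x_1,\dots,x_4)$, a germ in four variables with quadratic part $q$, so corank one produces $x_1^2+x_2^2+x_3^2+g(x_4)$. More substantively, the statement of item (2) is a biconditional: you explain why the excluded configurations ($\operatorname{rank}Q\le 1$, or $\operatorname{rank}Q=2$ with $C\cap\operatorname{Sing}Q$ non-reduced) give non-$ADE$ singularities at $p$, but a complete argument also has to verify the forward direction --- that under the stated hypotheses on $Q$ and on the singularities of $C$, the germ at $p$ really is $A_k$ (with the asserted $k$) or $D_4$, and that no further singularity of $X$ away from $p$ escapes the $ADE$ list. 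The claimed recursion $k\mapsto k-2$ between the $A$-type at $p$ and at the vertex of the cone is exactly the delicate point; one should actually track the order of the residual $g(x_4)$ against the local equation of $C$ at $v$ rather than asserting the match.

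Neither issue is a conceptual dead end --- the strategy is sound, and these are precisely the computations carried out in \cite{cml} --- but as a blind proof it needs the bookkeeping around (iii) corrected and the normal-form matching in Part (2) done explicitly in both directions.
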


In addition to cubics with isolated singularities, we need to consider the so-called chordal cubic threefolds. Namely, we recall that the secant variety of a rational normal curve in $\mathbb P^4$ is a cubic hypersurface, which is singular exactly along the rational normal curve; we will call this a {\it chordal cubic (threefold)}.  Occasionally, we will need to fix a specific chordal cubic.  We set {\it the standard rational normal curve} in $\mathbb P^n$ to be the one given by the map $(t:s)\mapsto (t^n,t^{n-1}s,\ldots,s^n)$.
The secant variety to the standard rational normal curve in $\mathbb P^4$ is called {\it the standard chordal cubic (threefold)}; note that the singular locus contains the point $p=(1,0,0,0,0)$. The following is easily established:

\begin{lemma}\label{lemchordal}
If $X$ is the standard chordal cubic, then the associated $(2,3)$-scheme is
$$C=V\left(x_2x_4-x_3^2,x_1(x_1x_4-x_2x_3)-x_2(x_1x_3-x_2^2)\right)\subseteq \mathbb P^3,$$
and the support of $C$ is the standard rational normal curve in $\mathbb P^3$.   Conversely, given a $(2,3)$-scheme  in $\mathbb P^3$ with support equal to a rational normal curve, the associated cubic is a chordal cubic. \qed
\end{lemma}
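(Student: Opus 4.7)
The plan is to establish the two directions in turn. For the forward direction, I would realize the standard chordal cubic as the determinant of the Hankel matrix
$$H = \begin{pmatrix} x_0 & x_1 & x_2 \\ x_1 & x_2 & x_3 \\ x_2 & x_3 & x_4 \end{pmatrix},$$
whose $2 \times 2$ minors define the standard rational normal curve in $\mathbb{P}^4$ and whose $3\times 3$ determinant therefore cuts out the secant variety. Expanding $\det H$ along the first row exhibits the cubic in the shape $x_0 q - f$ with precisely the $q$ and $f$ of the statement; the sign of $f$ is irrelevant for $V(q,f)$. The identity $f = x_1 q_2 - x_2 q_1$, where $q_1,q_2,q_3$ are the three quadrics generating the ideal of the standard twisted cubic $R \subset \mathbb{P}^3$, gives $R \subseteq V(q,f)$ at once. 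To confirm the support equals $R$, I would work in the affine chart $\{x_4 \neq 0\}$: the equation $q = 0$ reads $x_2 = x_3^2$, and substituting into $f$ collapses it to $(x_1 - x_3^3)^2$, a square of the defining equation of $R$ in the chart. A direct inspection of $V(q,f) \cap \{x_4 = 0\}$ handles the remaining point $(1{:}0{:}0{:}0) \in R$.

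For the converse, my plan is a sequence of three reductions via the action of $\PGL_4 \subset \PGL_5$ (the stabilizer of $p$), which transforms $(q,f)$ compatibly with the associated cubic $x_0 q + f$. First, using transitivity of $\PGL_4$ on rational normal curves in $\mathbb{P}^3$, I would assume $R' = R$ is the standard twisted cubic, so that $q \in I(R)_2 = \langle q_1, q_2, q_3 \rangle$. Second, I would determine the type of $V(q)$. It cannot be reducible, since $R$ is irreducible and spans $\mathbb{P}^3$. It cannot be smooth either: on $\mathbb{P}^1 \times \mathbb{P}^1$ the twisted cubic has bidegree $(1,2)$, while $V(f)|_{V(q)}$ has bidegree $(3,3)$, and no multiple $m[R] = (m, 2m)$ equals $(3,3)$. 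Hence $V(q)$ is a rank-$3$ cone, and a direct computation on the conic of singular quadrics in $I(R)_2$ (the locus $\alpha\gamma = \beta^2$ in coordinates $q = \alpha q_1 + \beta q_2 + \gamma q_3$) shows the vertex lies on $R$. Applying $\mathrm{Aut}(R) = \PGL_2 \subset \PGL_4$ I would then bring the vertex to $(1{:}0{:}0{:}0)$, so that $q = q_3 = x_2 x_4 - x_3^2$. Third, I would pin down $f$ modulo $q$ up to scale. In the chart $\{x_4 \neq 0\}$ the restriction $g(x_1, x_3) = f(x_1, x_3^2, x_3, 1)$ must have zero locus equal to the reduced affine rational normal curve $\{x_1 = x_3^3\}$; since $\mathbb{C}[x_1, x_3]$ is a UFD this forces $g = c(x_1 - x_3^3)^m$ for some $c \neq 0$ and $m \geq 1$. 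The degree constraint arising from $f$ being a cubic rules out $m \geq 3$ (an $x_3^9$ term would appear), and $m = 1$ is excluded by a global check. Only $m = 2$ survives, recovering the $f$ of the statement; the forward direction then identifies $V(x_0 q + f)$ as the chordal cubic.

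The most delicate point is the exclusion of $m = 1$. The affine chart $\{x_4 \neq 0\}$ is entirely consistent with $m = 1$, which would correspond to $f = x_1 x_4^2 - x_3^3$; the failure must be detected globally, by observing that $V(q, f) \cap \{x_4 = 0\}$ contains the spurious line $\{x_3 = x_4 = 0\}$, so that the support is $R \cup L$ rather than $R$. The remaining ingredients reduce to linear algebra on the three-dimensional space $I(R)_2$ and standard intersection theory on the smooth quadric $\mathbb{P}^1 \times \mathbb{P}^1$.
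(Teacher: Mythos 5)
Your proof is correct in substance but takes a genuinely different route from the paper's. The paper's converse argument is very short: it invokes the singularity dictionary i)--iv) of Section 1.2 to conclude that if $C$ is everywhere non-reduced along a twisted cubic $R$, then $\Sing(X)$ contains a rational normal curve of degree four in $\mathbb{P}^4$, and then cites the classification of cubic threefolds with one-dimensional singular locus (equivalently, the fact that the chordal cubic is the unique cubic singular along the degree-$4$ RNC) to finish. Your argument instead normalizes coordinates via $\PGL_4$ and pins down the pair $(q,f)$ directly: the bidegree obstruction on $\mathbb{P}^1\times\mathbb{P}^1$ forcing $Q$ to be a cone, the classical fact that quadric cones through a twisted cubic have vertex on the curve, and the factorization $g=c(x_1-x_3^3)^m$ in the affine chart. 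This is longer but more self-contained; the paper's route is slicker but offloads the heavy lifting to the singularity correspondence and a classification statement.

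One small point you leave implicit and should make explicit: the inference that $g=c(x_1-x_3^3)^2$ ``recovers the $f$ of the statement'' requires knowing that the restriction map $f\mapsto f(x_1,x_3^2,x_3,1)$ is injective on $\mathbb{C}[x_1,\dots,x_4]_3$ modulo $q\cdot\mathbb{C}[x_1,\dots,x_4]_1$. This holds because the image of the parameterization $(x_1,x_3)\mapsto(x_1,x_3^2,x_3,1)$ is dense in the cone $V(q)$ (its homogeneous Zariski closure is exactly $V(q)$), so a homogeneous $f$ vanishing on the image vanishes on $V(q)$ and hence lies in $(q)$; this is also the justification for your $m=1$ exclusion, since it guarantees that $f\equiv x_1x_4^2-x_3^3\pmod{q}$ is the unique choice with $g=x_1-x_3^3$. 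With that observation spelled out, the argument is complete.
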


\begin{proof}
We provide a brief sketch, and  leave the details to the reader.  The equations for the $(2,3)$-scheme associated to the standard chordal cubic are easily worked out from its determinantal description.  One can check directly that the support is the standard rational normal curve of degree three.   Conversely, given a $(2,3)$-scheme in $\mathbb P^3$ with support equal to a rational normal curve of degree three, one uses i)-iv) above to show that the singular locus of the associated cubic $X$ contains a rational normal curve of degree four.  One concludes (from the paragraph before i)-iv)) that $X$ contains a chordal cubic, finishing the proof.
\end{proof}

\subsection{Rational maps to moduli spaces of curves}\label{sectcubicg4}
Let $\mathscr H_{cub} \cong \mathbb P^{34}$ be the Hilbert scheme of cubics in $\mathbb P^4$ and $\Delta\subset \mathscr H_{cub}$ the discriminant.
We define $\Delta_0\subset \Delta$ to be the locus of cubics that are singular at the point $p=(1,0,0,0,0)\in \mathbb P^4$. Clearly $\Delta_0$ is a projective space and $\Delta_0=\mathbb PD_0$ (see \S \ref{sectsing}).
The isomorphism \eqref{map1} induces a rational map
$$
\Delta_0\dashrightarrow \mathbb PV_2\times \mathbb PV_3.
$$
Composing with the rational map $\mathbb  P V_2\times  \mathbb PV_3\dashrightarrow \mathbb PE$ gives
\begin{equation}\label{map2}
\Delta_0\dashrightarrow \mathbb PE.
\end{equation}
This is regular outside of the locus of cubics that are reducible or have a triple point at $p$; indeed the map is given by the rule $[x_0q+f]\mapsto ([q],[\bar f])$, with  $\bar f(= f\mod q)$ , which is defined so long as $q$ is non-zero and $f$ is not divisible by $q$.

We have seen that there is a morphism $\mathbb PE^\circ\to \overline M_4$, where $\mathbb PE^\circ$ is the locus of $(2,3)$-schemes with  at worst nodal singularities.    Let $\Delta_0^{A_1}$ be the locus of cubics, singular at $p$, which have only nodes as singularities.
From Proposition \ref{comparesing} it follows that there is a morphism
$$
\Delta_0^{A_1}\to \mathbb PE^\circ\to \overline M_4.
$$
Let $G'$ be the subgroup of $\SL(5)$ that fixes the point $p$.  This group, and its action on $\Delta_0$, will be investigated in more detail in section \ref{sectgit} below. For now, we note that it is elementary to check that the morphism $\Delta_0^{A_1}\to \overline M_4$ is $G'$-invariant. We will consider the GIT problem later, but for now, we can conclude that there is a map of sets
$$\Delta_0^{A_1}/G'\to \overline M_4.$$

We now consider the map in the opposite direction.  Given a smooth genus four curve $C$, the canonical model is a $(2,3)$-complete intersection in $\mathbb P^3$, where $\mathbb P^3$ has been identified with $\mathbb PH^0(C,K_C)^\vee$.  The curve $C$ lies on a unique quadric defined by say $q$, and on a cubic $f$, unique up to linear multiples of $q$.  Thus we get exactly the data of a $G'$ orbit of a point in $\Delta_0$.

We expand this construction to families. Let $\overline M_4^{(2,3)}$ be the locus of curves $C$ such that the canonical model $\phi(C)$ satisfies  $h^0(\mathbb P^3,I_{\phi(C)}(2))=1$ and $h^0(\mathbb P^3,I_{\phi(C)}(3))=5$; let $\overline {\mathcal M}_4^{(2,3)}$ be the associated sub-stack of $\overline {\mathcal M}_4$.
Let $g:\mathscr C\to B$ be a family in $\overline {\mathcal M}_4^{(2,3)}$.
  Let $B'\to B$ be an \'etale cover such that for the induced family $g':\mathscr C'\to B'$, the bundle $g'_*\omega_{\mathscr C'/B'}$ is trivialized.  The relative canonical embedding can then be viewed as a family of curves in $\mathbb P^3$.  The family being an object of $\overline {\mathcal M}_4^{(2,3)}$ implies that this is a family of $(2,3)$-complete intersections in $\mathbb P^3$.  Moreover, the family of quadrics is well-defined up to the action of $\SL(4)$, and the family of cubics is well defined up to the linear combinations of the quadric.  In other words, we get a well defined map of sets $B\to \Delta_0/G'$.
This induces a map of the underlying sets $\overline {M}_4^{(2,3)}\to \Delta_0/G'$.  Once we establish a GIT quotient (scheme) for the target in the subsequent sections, we will be able to conclude that this induces a morphism
$$\overline {M}_4^{(2,3)}\to \Delta_0\gquot G'.$$

Note that a theorem of Rosenlicht (see also  \cite[Thm. 4.3]{km}) states that an integral curve of genus $g$ and degree $2g-2$ in $\mathbb P^{g-1}$ is  non-hyperelliptic, Gorenstein, and is its own canonical model.  A theorem due to Fujita (see also  \cite[Prop. 5.5]{km}) states that such a curve is projectively normal, in the sense that for every $n\ge 1$, the hypersurfaces of degree $n$ cut out a complete linear system.  In particular, we  conclude that the generic points of the boundary divisors $\delta_0,\delta_1$ in $\overline{M}_4$ are contained in $\overline {M}_4^{(2,3)}$.

\begin{example}\label{exaC2A5}  Let $B\subseteq \mathbb C$ be the unit disc.
Let $\mathscr C\to B$ be a generic deformation of a generic curve $C\in \delta_2$.  This defines a morphism $B\to \overline M_4$ passing through the generic point of $\delta_2$, and via the construction above, a map $B^\circ =B-B\setminus\{0\}\to \Delta_0\gquot G'$.  We want to describe an extension of this morphism over the central point.
To do this, label the irreducible components of the central fiber $C$ as $C_1$ and $C_2$, and let us say they are attached at the points $p_1$ and $p_2$ respectively.  Blow-up $\mathscr C$ at the points which are the hyperelliptic involutions of $p_1$ and $p_2$ on the respective curves, as well as at $p_1=p_2$.  The result is a family with central fiber a chain of five curves: $C_1$, $C_2$, and three rational curves.
Twisting the relative dualizing sheaf by the appropriate divisors on the total space of the family supported on $C$, gives a line bundle which is degree one on each of the two rational tails, degree four on the rational bridge, and degree zero on the genus two curves.  The morphism associated to the line bundle gives a family of smooth $(2,3)$ curves degenerating to a curve which consists of two lines meeting a degree four smooth rational curve in distinct points; the singularity type of each intersection is type $A_5$ (i.e. the singularity type of the central fiber is $2A_5$).   The associated family of cubics, say $\mathscr X\to B$ has generic fiber equal to a cubic with a unique singularity, which is a singularity of type $A_1$ at $p=(1,0,0,0,0)$.  The central fiber has exactly three singularities, one of type $A_1$ at $p$, and two $A_5$ singularities.
\end{example}

\section{GIT for $g=4$ curves via cubic $3$-folds}\label{sectgit}
In this section we discuss a compact GIT model $\overline{M}_4^{GIT}$ for canonical genus $4$ curves induced from the GIT quotient for the moduli space of cubic $3$-folds. We then consider the projective bundle $\bP E$, discussed in \S\ref{sectpe}, parameterizing $(2,3)$-schemes (in $\mathbb P^3$), and show that $\overline{M}_4^{GIT}\cong \bP E \gquot_{\calO(3,2)} \SL(4)$ (Proposition \ref{GITIdentification}).  Finally, we identify $\overline{M}_4^{GIT}$ with a more standard GIT quotient, the GIT quotient of the Chow variety $\Chow_{4,1}$ associated to  genus $4$ curves.

\subsection{GIT for cubic $3$-folds}
We start by reviewing  the results of Allcock \cite{allcock1} on the GIT quotient for cubic threefolds.  As usual, change of coordinates gives an action of $G=\SL(5,\mathbb C)$ on $\mathscr H_{cub} \cong \mathbb P^{34}$ and there is a GIT quotient $\mathscr H_{cub}\gquot G$.
To describe the GIT stability of cubic threefolds, Allcock \cite[p.2]{allcock1} considers the family of cubics
$$
F_{A,B}=x_0(x_3^2-x_2x_4)+Ax_2^3+Bx_1x_2x_3+x_1^2x_4,
$$
(for $(A,B)\in \mathbb C^2\setminus \{(0,0)\}$) and the cubic threefold (not of type $F_{A,B}$) defined by
$$
F_D=x_0x_1x_2+x_3^3+x_4^3.
$$
In particular, the case $A=1,B=2$ gives the chordal cubic discussed above:
$$
F_c=-\det
\left( \begin{array}{ccc}
x_0 & x_1 & x_2 \\
x_1 & x_2 & x_3 \\
x_2 & x_3 & x_4 \end{array} \right).
$$
Note that these cubics are singular at the point $p=(1,0,\dots,0)$; thus they belong to the locus $\Delta_0\subset \mathscr H_{cub}$.  Note also that  the involution $\tau\in \SL(5,\mathbb C)$ determined by $x_i\mapsto x_{4-i}$ has the property that $\tau F_{A,B}=F_{A,B}$.  Consequently every cubic in $F_{A,B}$ is singular at $p':=\tau(p)=(0,0,0,0,1)$ as well, and if the singularity at $p$ is isolated, then the singularities at $p$ and $p'$ are of the same type.  One can also check that $F_{0,B}$ has an $A_1$ singularity at the point $(0,0,1,0,0)$.

Let $X_1$ and $X_2$ be cubic threefolds.  We say that
$X_1$ degenerates to $X_2$ if the latter is in the closure of the $G$-orbit of the former.  For a hypersurface in $\mathbb P^n$, the quadratic terms of a singularity define a quadric form on the tangent space to $\mathbb P^n$.   The kernel of this form determines a linear subspace of $\mathbb P^n$, called the null space of the singularity; the dimension of this space is called the nullity (and also the corank).  The nullity of an $A_n$ ($n>1$) singularity is one, and the nullity of a $D_4$ singularity is two.
We also note the following.   Let $X$ be a cubic threefold with a double point $x\in X$.  Let $\pi_x:\mathbb P^4\dashrightarrow \mathbb P^3$ be the projection from $x$.  Let $Q_x\subseteq \mathbb P^3$ be the quadric determined by $x$.  The null space associated to $x$ can be identified with $\pi_x^{-1}(\operatorname{Sing}Q_x)$.

 Now, we can state the GIT analysis for cubic threefolds as follows.

\begin{theorem}[{Allcock \cite{allcock1}}]\label{thmallcock} Let $X\in \mathscr H_{cub}$ be a cubic threefold.    The GIT stability of $X$ with respect to the natural linearization is described as follows.
 \begin{enumerate}
\item $X$ is stable if and only if it has at worst $A_1,\ldots,A_4$ singularities.
\item The minimal orbits of strictly semi-stable cubic threefolds are the orbits of $F_D$ and of the $F_{A,B}$; i.e. these are the poly-stable cubics.

\item $X$ is strictly semi-stable if and only if \begin{enumerate}
\item $X$ contains a $D_4$ singularity, in which case $X$ degenerates to $F_D$, or,
\item $X$ contains an $A_5$ singularity, in which case $X$ degenerates to $F_{A,B}$ for some $A,B$ such that $4A/B^2\ne 1$, or,
\item $X$ contains an $A_n$ singularity ($n\ge 6$), but does not contain any of the planes containing its null line, in which case $X$ degenerates to $F_c$, or,
\item $X$ is a chordal cubic.
\end{enumerate}
\item $X$ is unstable if and only if \begin{enumerate}
\item $X$ has non-isolated singularities and is not a chordal cubic, or,
\item $X$ contains an isolated singularity other than an $A_1,\ldots,A_5$ or $D_4$ singularity, and, if this singularity is of type $A_n$ ($n\ge 6$) then $X$ contains a plane containing its null line.
\end{enumerate}
\end{enumerate}
\end{theorem}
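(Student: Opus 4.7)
The plan is to prove the theorem via the Hilbert--Mumford numerical criterion combined with a geometric translation of destabilizing $1$-parameter subgroups into singularity types. A cubic $X = V(F) \subset \mathbb{P}^4$ is (semi)stable for the natural $\SL(5)$-linearization if and only if for every nontrivial $1$-PS $\lambda:\mathbb{G}_m \to \SL(5)$, the weight $\mu(F,\lambda)$ is positive (resp.~nonnegative). After conjugating, I may assume $\lambda = \lambda_r = \mathrm{diag}(t^{r_0},\ldots,t^{r_4})$ with $r_0 \geq \cdots \geq r_4$ and $\sum r_i = 0$, so the weight of a monomial $x_0^{\alpha_0}\cdots x_4^{\alpha_4}$ is $\langle r,\alpha\rangle$. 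The analysis splits into enumerating the maximal destabilizing states (subsets of degree-$3$ monomials contained in some open halfspace $\{\langle r,\alpha\rangle > 0\}$) and matching each such state to a geometric feature of the corresponding cubic.

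For the enumeration, I would run the standard convex-geometric analysis inside the state polytope of degree-$3$ forms on $\mathbb{P}^4$, producing finitely many maximal destabilizing states, each determined by a flag of coordinate subspaces fixed by $\lambda_r$. Geometrically each state corresponds to cubics with high-multiplicity contact along such a flag, and by Proposition \ref{comparesing} and the $ADE$ classification, these are precisely the cubics having a singularity strictly worse than $A_5$ or $D_4$, or cubics with hidden projective symmetries such as an $A_n$ singularity ($n\geq 6$) whose null line lies in a contained plane. Conversely, I would check that if $X$ has only $A_1,\ldots,A_4$ singularities, then for every nontrivial $\lambda_r$ the support of $F$ meets the negative-weight halfspace $\{\langle r,\alpha\rangle < 0\}$; this reduces to writing a normal form $x_0 x_k + g$ for $F$ near each singular point and a short bookkeeping of the $r_i$.

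To identify the closed orbits in the semistable locus I would exploit fixed loci of tori. A direct check shows that $F_{A,B}$ has weight zero under the $1$-PS with diagonal weights $(2,1,0,-1,-2)$ (compatible with the symmetry $\tau$), while $F_D$ is fixed by a $2$-dimensional subtorus including $(1,-1,0,0,0)$ and $(1,1,-2,0,0)$; both are therefore polystable, and the parameter count matches the expected dimension of the strictly semistable boundary in $\mathscr{H}_{cub}\gquot G$. For each strictly semistable case (a)--(d) I would then construct an explicit weight-zero $1$-PS $\lambda$ on $F$ and compute the flat limit $\lim_{t\to 0}\lambda(t)\cdot F$ to identify the unique closed orbit in $\overline{G\cdot F}$: the $A_5$ and $D_4$ cases degenerate to $F_{A,B}$ and $F_D$ respectively by running a $1$-PS that isolates the leading singular normal form, while higher $A_n$ cubics without a contained plane through the null line degenerate to the chordal cubic $F_c$ via a $1$-PS contracting onto that line.

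The main obstacle is the delicate separation of strictly semistable from unstable in the presence of $A_n$ singularities with $n \geq 6$: the null line of the singularity carries distinguished projective data, and stability turns on whether or not the cubic contains a plane through this line. Handling this cleanly requires explicit local models for each higher $A_n$ singularity and a careful limit computation along the natural $1$-PS fixing the null line. Similarly, showing that the chordal cubic is the unique strictly semistable cubic with non-isolated singularities requires classifying all possible one-dimensional singular loci (projective lines, plane conics, twisted cubics in various configurations) and producing an explicit destabilizing $1$-PS in each non-chordal case. These two case analyses constitute the substance of the argument, and verifying them is the bulk of the work carried out in \cite{allcock1}.
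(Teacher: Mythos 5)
The paper does not prove Theorem \ref{thmallcock}: it is quoted directly from Allcock \cite{allcock1}, and no independent proof appears in the text. That said, your outline correctly reproduces the strategy of Allcock's proof: reduce to the Hilbert--Mumford numerical criterion with a diagonalized $1$-PS $\lambda_r$; enumerate maximal destabilizing states via the convex geometry of the degree-$3$ state polytope; translate each state into a geometric condition on singularities (using the kind of dictionary recorded in Proposition \ref{comparesing}); and identify the closed semistable orbits by locating positive-dimensional torus stabilizers. Your spot-checks are correct: every monomial of $F_{A,B}$ has weight zero under $\mathrm{diag}(t^2,t,1,t^{-1},t^{-2})$, and the zero-weight conditions for $F_D=x_0x_1x_2+x_3^3+x_4^3$ force $r_3=r_4=0$ and $r_0+r_1+r_2=0$, a $2$-dimensional torus containing the two $1$-PS you name.

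The gap is that the proposal is an outline, not an argument: you yourself flag the two substantive steps --- separating strictly semistable from unstable in the presence of an $A_n$ ($n\ge 6$) singularity by the contained-plane-through-the-null-line criterion, and classifying all possible non-isolated singular loci to show the chordal cubic is the only semistable one --- and then defer them wholesale to \cite{allcock1}. These are precisely the points where the theorem has content beyond the generic application of Hilbert--Mumford, and without explicit normal forms and limit computations for each higher $A_n$ and for each candidate one-dimensional singular locus, the statement in parts (3)(c), (3)(d), (4)(a), and (4)(b) is asserted rather than proved. Since the paper itself simply cites Allcock for this theorem, there is no in-paper proof to compare against; but as a standalone proof attempt, yours identifies the right road and stops before walking it.
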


\begin{remark} The following,  shown in \cite{allcock1}, clarifies which cubics are parameterized by $F_{A,B}$ and $F_D$.
 $V(F_{A,B})\cong V(F_{A',B'})$ if and only if $4A/B^2=4A^{'}/B^{'2}\in \mathbb C\cup \infty$.  If $4A/B^2 \ne 0,1$, then $F_{A,B}$ has exactly two singularities, both of type $A_5$.  If $4A/B^2=0$, then $F_{A,B}$ has exactly three singularities, two of type $A_5$ and one of type $A_1$.  If $4A/B^2=1$, $F_{A,B}$ is a chordal cubic.  $F_D$ has exactly three singularities, each of type $D_4$.
\end{remark}

\subsection{The definition of the space  $\overline M_4^{GIT}$} As discussed in \S \ref{sectprelim} (esp. \S\ref{sectcubicg4}), there is close relationship between (the normalization of) the discriminant divisor in the moduli space of cubic threefolds and the moduli of canonically embedded genus $4$ curves. Here we will interpret Theorem \ref{thmallcock} as a GIT result for canonically embedded, genus $4$ curves.

We start by defining a space $\overline M_4^{GIT}$ as the normalization of the discriminant divisor for cubic $3$-folds:
\begin{equation}
\overline M_4^{GIT}:=\left(\Delta\gquot G\right)^\nu\to \Delta\gquot G \subset \calH_{cub}\gquot G,
\end{equation}
where $G=\SL(5)$, $\Delta$ is the discriminant hypersurface in the parameter space for cubics $\calH_{cub}=\bP H^0(\bP^4,\calO_{\bP^4}(3))$, and the superscript $\nu$ denotes the normalization. The notation is justified by the fact  $\overline M_4^{GIT}$ is a projective variety (by construction) which is birational to the moduli of genus $4$ curves $\overline{M}_4$ (see \S \ref{sectcubicg4}).   We also point out the following.  Let $\nu:\Delta^\nu\to \Delta$ be the normalization, and let $L^\nu$ be the pull-back of the linearization on $\Delta$.  $L^\nu$ is ample, and it is well known that $(\Delta\gquot G)^\nu=\Delta^\nu\gquot G$.    Moreover, since $\Delta^\nu$ is complete, and normalization maps are finite, it follows from \cite[Thm. 1.19]{git} that a point $x\in \Delta^\nu$ is stable (resp. semi-stable, poly-stable) if and only if $\nu(x)$ is stable (resp. semi-stable, poly-stable).   Thus Allcock's results give a complete description of the stability conditions on $\Delta^\nu$ as well.

The difficulty in immediately identifying $\overline M_4^{GIT}$ with a moduli space of curves is that in obtaining a curve in $\mathbb P^3$ from a singular cubic threefold, one must choose a singular point, as well as a choice of coordinates for the projection.  The former ambiguity is essentially taken care of by the normalization, but the latter still remains an issue.  In other words, there is not a family of curves lying over $\Delta^\nu$.  However, as we discussed in \S\ref{sectcubicg4}, there is a family of curves  lying over the related space $\Delta_0$, the locus of cubics singular at the fixed point $p$.  Thus, our first step is to describe $\overline M_4^{GIT}$  as a quotient of $\Delta_0$ (instead $\Delta$).

\subsection{$\overline M_4^{GIT}$ as a non-reductive quotient}\label{nrquotient} 
As explained above, we are interested in describing $\overline M_4^{GIT}$ as a quotient of $\Delta_0$. The obvious choice of quotient is  $\Delta_0\gquot G'$, where  $G'\subset G$ is the parabolic subgroup stabilizing $p\in \bP^3$. Since the group $G'$ is not reductive, the main issue is to make precise the meaning of the quotient of $\Delta_0$ by $G'$, and to prove that such a quotient exists. 

To start, we define $\Delta_0\gquot G'$ as the $\Proj$ of the ring $R'$ of $G'$-invariant sections of powers of the polarization $\calO(1)$ on $\Delta_0$ (N.B. $\Delta_0\cong \bP^{29}$).  However, since  $G'$ is not reductive, finite generation of the ring $R'$ is not automatic. Following Kirwan \cite[\S3]{kirwannr} (see also \cite{dkir}),  we handle this issue by replacing the action of the non-reductive group $G'$ by the action of a reductive group $G$ (containing $G'$) on a related quasi-projective variety. As before, we take $G=\SL(5)$ and consider the 
variety $\widetilde{\Delta}=G\times_{G'} \Delta_0$, where as usual $G\times_{G'} \Delta_0$ is the quotient of $G\times \Delta_0$ by the free action of $G'$: $h(g,X)=(gh^{-1},hX)$ (for $h\in G'$). In our situation $G/G'\cong \bP^4$ and it is not hard to see that $\widetilde \Delta$ coincides with the space of cubics with a marked singularity:
$$\widetilde \Delta =\left\{ (X,x)\mid X \textrm{ is a cubic threefold singular at } x\right\}\subset \calH_{cub}\times \bP^4.$$
It is  well known that $\widetilde \Delta$ is determinantal (it is a Fitting scheme associated to a map of cotangent bundles), of the expected dimension, and normal.

\begin{notation}
Let $X$ be a projective variety and $L$ a (not necessarily ample) line bundle. We denote $R(X,L):=\oplus_{n\ge 0} H^0(X,L^{\otimes n})$ the ring of sections of $L$. If a group $H$ (not necessarily reductive) acts on $L$, we denote $R(X,L)^H\subseteq R(X,L)$ the subring of $H$-invariant sections.  If $R(X,L)^H$ is finitely generated, then we define $X\gquot_L H:=\Proj R(X,L)^H$. If $L$ is ample and $H$ reductive, $X\gquot_L H$ is the standard GIT quotient.
\end{notation}

The pull back of the line bundle $\pi_2^*\mathscr O_{\mathbb P^{34}}(1)$ to $\widetilde \Delta$ gives a  line bundle $\tilde L$ on $\widetilde \Delta$.
Note also that the natural action of $G$ on $\calH_{cub}\cong \mathbb P^{34}$ extends the action of $G'$, so that
$$
G\times _{G'}\mathbb P^{34}\cong (G/G')\times \mathbb P^{34}=\mathbb P^4\times \mathbb P^{34},
$$
where the isomorphism on the left is given by the rule $[g,x]\mapsto (gG',gx)$.
In particular, the line bundle obtained by pulling back $\pi_2^*\mathscr O_{\mathbb P^{34}}(1)$ to $\widetilde \Delta$ via the embedding $$\widetilde \Delta=G\times _{G'}\Delta_0\subseteq G\times_{G'}\mathbb P^{34},$$ (analogous to the line bundle considered in \cite[p.10]{kirwannr})  is equal to  $\tilde L$.
In addition, and again similar to the case studied in \cite[p.10]{kirwannr}, there is a natural identification of the ring of invariants:
\begin{equation}\label{invring}
R':=R(\Delta_0,L)^{G'}\cong R(\widetilde{\Delta},\tilde L)^{G},
\end{equation}
where $L=\mathscr O(1)$ on $\Delta_0$ is the natural polarization induced from the inclusion
$\Delta_0\subseteq \mathbb P^{34}$. In other words, we have replaced a non-reductive GIT quotient $\Delta_0\gquot G'$ by a reductive GIT quotient $\widetilde{\Delta}\gquot G$, but the main issue, the finite generation of $R'$,  still remains:  we can not apply directly the standard GIT results (since $\tilde L$ is not ample). We solve this issue as follows:

\begin{proposition}\label{FiniteGeneration}
There is a morphism $\widetilde \Delta\to \Delta^\nu$ such that the pull-back of $L^\nu$ to $\widetilde \Delta$ is equal to $\tilde L$ and 
$$
R':=R(\Delta_0,L)^{G'}\cong R(\Delta^\nu, L^\nu)^{G}.
$$
In particular, $R'$ is finitely generated, and $\Delta_0\gquot G'$ is well defined and isomorphic to $\overline M_4^{GIT}$. 
\end{proposition}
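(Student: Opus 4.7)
The plan is to construct a projective birational morphism $f:\widetilde\Delta\to\Delta^\nu$ and then to identify the $G$-invariant section rings of $\tilde L$ and $L^\nu$ via a Stein-factorization and projection-formula argument. First I would observe that the second projection $\calH_{cub}\times\bP^4\to\calH_{cub}$ restricts on $\widetilde\Delta$ to a projective $G$-equivariant morphism $\pi:\widetilde\Delta\to\calH_{cub}$ whose image lies in $\Delta$ by the very definition of $\widetilde\Delta$. Since the generic point of $\Delta$ parameterizes a cubic threefold with a single node, the scheme-theoretic fiber of $\pi$ over this generic point is a single reduced point, so $\pi$ is birational onto $\Delta$. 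Using the normality of $\widetilde\Delta$ (asserted in the paragraph preceding the statement as a consequence of its description as a Fitting scheme of the expected dimension), the universal property of the normalization produces a unique factorization $\pi=\nu\circ f$ with $f:\widetilde\Delta\to\Delta^\nu$. By construction $\tilde L=\pi_2^*\calO_{\bP^{34}}(1)|_{\widetilde\Delta}$ equals $f^*L^\nu$, giving the first assertion of the proposition.

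Next I would show that $f_*\calO_{\widetilde\Delta}=\calO_{\Delta^\nu}$. The Stein factorization $f=g\circ h$ has $h$ with connected fibers and $g:Z\to\Delta^\nu$ finite; since $f$ is birational so is $g$, and any finite birational morphism to a normal variety is an isomorphism, so $g$ is an isomorphism. Therefore $f_*\calO_{\widetilde\Delta}=\calO_{\Delta^\nu}$, and by the projection formula $f_*(\tilde L^{\otimes n})=L^{\nu,\otimes n}$ for every $n\ge 0$. Passing to global sections would then yield a $G$-equivariant isomorphism of graded rings $R(\widetilde\Delta,\tilde L)\cong R(\Delta^\nu,L^\nu)$, which restricts to an isomorphism of $G$-invariant subrings. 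Combined with \eqref{invring}, this gives $R'\cong R(\Delta^\nu,L^\nu)^G$.

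To finish, I would invoke the standard finite-generation theorem of GIT: since $L^\nu$ is ample on the projective variety $\Delta^\nu$ and $G=\SL(5)$ is reductive, $R(\Delta^\nu,L^\nu)^G$ is finitely generated and its $\Proj$ is $\Delta^\nu\gquot G=\overline M_4^{GIT}$. It then follows that $R'$ is finitely generated, that $\Delta_0\gquot G':=\Proj R'$ is well defined, and that it is canonically isomorphic to $\overline M_4^{GIT}$. The main point requiring care will be the Stein-factorization step, specifically confirming that $f$ is projective and birational; both reduce to concrete facts already in hand, namely that $\widetilde\Delta$ is closed in $\calH_{cub}\times\bP^4$ and that nodal cubic threefolds (which have a unique singular point) are dense in $\Delta$. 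Everything else is formal and uses only the normality of $\widetilde\Delta$, which the paper has already recorded.
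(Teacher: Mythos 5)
Your proposal is correct and reaches the same conclusion, but the key intermediate step is handled by a genuinely different device. The paper, after constructing $f:\widetilde\Delta\to\Delta^\nu$ exactly as you do (projection plus normality plus the universal property of normalization), proves the section-ring isomorphism by observing that $f$ is an isomorphism away from the locus of cubics with positive-dimensional singular locus, which has codimension at least two in both the source and the target; normality of both spaces then forces $H^0$ of every power of the line bundle to agree. You instead establish $f_*\calO_{\widetilde\Delta}=\calO_{\Delta^\nu}$ via Stein factorization (using that $f$ is proper, birational, and the target is normal) and then apply the projection formula to conclude $H^0(\widetilde\Delta,\tilde L^{\otimes n})\cong H^0(\Delta^\nu,(L^\nu)^{\otimes n})$ for all $n$.

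Both routes are valid and both lean on the same inputs (normality of $\widetilde\Delta$ and $\Delta^\nu$, properness and birationality of $f$). Your Stein-factorization argument is somewhat more robust: it does not require locating the non-finite locus of $f$ or checking its codimension — Zariski's connectedness in the form you use it gives $f_*\calO=\calO$ unconditionally for a proper birational map onto a normal variety. The paper's codimension argument is more explicitly geometric and makes visible the actual locus (cubics with non-isolated singularities, e.g.\ chordal cubics) over which $f$ fails to be an isomorphism, which is thematically useful later in the paper. A minor additional feature of your approach: you first identify the full graded rings $R(\widetilde\Delta,\tilde L)\cong R(\Delta^\nu,L^\nu)$ and then pass to invariants, whereas the paper only asserts the isomorphism on the invariant subrings; your phrasing is marginally cleaner but the strength is the same since the codimension argument also yields the full-ring statement. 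Everything else in your proposal — the identification $\tilde L=f^*L^\nu$, the appeal to reductivity of $\SL(5)$ and ampleness of $L^\nu$ for finite generation, and the final identification with $\overline M_4^{GIT}$ — matches the paper.
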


\begin{proof}
Our geometric description shows that $\widetilde\Delta\subseteq \mathbb P^4\times \mathbb P^{34}$
admits a forgetful map to $\Delta$.
 $\tilde \Delta$ is normal, and
consequently this map  factors through the normalization $\widetilde \Delta \to \Delta^\nu\stackrel{\nu}{\to} \Delta$.     From the definitions it is clear that $\tilde L$ is the pull-back of $\mathscr O(1)$ from $\Delta$.  We set $L^\nu$ to be the pull-back of $\mathscr O(1)$ to $\Delta^\nu$.
Thus  the result will be proven  provided we show there is an isomorphism   $\oplus_n H^0(\widetilde \Delta,\tilde L^{\otimes n})^G\cong \oplus_n H^0(\Delta^\nu,(L^\nu)^{\otimes n})^G$.  But $\widetilde\Delta$ and $\Delta^\nu$ agree outside of the locus of cubics with positive dimensional singular locus, which is codimension at least two in both spaces.   Thus the spaces of sections agree, and the result is proven.
\end{proof}

\subsection{The  GIT quotient of the projective bundle}
Using Proposition \ref{FiniteGeneration} and the discussion of \S\ref{sectcubicg4}, we can now identify $\overline M_4^{GIT}$ with a standard   GIT quotient for genus $4$ curves.

\begin{proposition}
\label{GITIdentification}  Pulling back sections via the rational map $\Delta_0\dashrightarrow  \mathbb PE$ defines an isomorphism
$$
 R( \mathbb PE,\mathscr O(3,2))^{\SL(4)}\to R'=R(\Delta_0,L)^{G'}.
$$
Thus  $
\mathbb{P}E \gquot_{\mathcal{O} (3,2)} \SL (4)  \cong \overline M_4^{GIT}$.
\end{proposition}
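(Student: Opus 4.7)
The plan is to compute both graded rings as explicit subrings of the bigraded polynomial ring $\Sym^\bullet(V_2^* \oplus V_3^*)$ and verify that pullback by $\phi$ realizes their natural identification.

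First, I would set up the $G'$-action on $\Delta_0 \cong \bP(V_2 \oplus V_3)$. Writing $G' = U \rtimes L$ with Levi $L \cong \GL(4)$ realized via $A \mapsto \operatorname{diag}(\det(A)^{-1}, A) \in \SL(5)$, a direct matrix computation shows that $L$ acts on $V_3$ by the standard substitution and on $V_2$ by substitution twisted by $\det$, while the unipotent radical $U \cong V_1$ acts by $(q, f) \mapsto (q, f + vq)$. The rational map $\phi \colon \Delta_0 \dashrightarrow \bP E$, $[(q,f)] \mapsto ([q], [\bar f])$, is regular on the open locus $\Delta_0^{\circ\circ} = \{q \neq 0,\ \bar f \neq 0\}$ (whose complement in $\Delta_0 \cong \bP^{29}$ has codimension at least $2$), and is $G'$-equivariant with $U$ acting trivially on $\bP E$ (since $u\cdot f$ differs from $f$ by an element of $qV_1$, which vanishes in $E_q$). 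Chasing the tautological subbundle $\calO_{\Delta_0}(-1) \hookrightarrow (V_2 \oplus V_3) \otimes \calO \twoheadrightarrow V_3 \otimes \calO \twoheadrightarrow \phi_{V_2}^* E$ through the universal property of $\bP E$ yields $\phi^* \eta = \phi^* h = \calO_{\Delta_0}(1)$ on $\Delta_0^{\circ\circ}$, so $\phi^* \calO(3,2) \cong \calO_{\Delta_0}(5)$. By smoothness of $\Delta_0$ and the codimension bound, Hartogs extends pulled-back sections globally, giving a $G'$-equivariant map $\phi^* \colon H^0(\bP E, \calO(3k, 2k))^{\SL(4)} \to H^0(\Delta_0, \calO(5k))^{G'}$.

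Next, I would identify both invariant spaces inside $\Sym^{3k} V_2^* \otimes \Sym^{2k} V_3^*$. Decomposing $H^0(\Delta_0, \calO(n)) = \bigoplus_{i+j = n} \Sym^i V_2^* \otimes \Sym^j V_3^*$, a direct calculation shows the center $\bC^* \subset L$ acts on $\Sym^i V_2^* \otimes \Sym^j V_3^*$ with weight $-2i + 3j$, so $R'$ is concentrated in bidegrees $(3k, 2k)$, with $R'_{5k} = (\Sym^{3k} V_2^* \otimes \Sym^{2k} V_3^*)^{G'}$. On the $\bP E$ side, the projection formula together with the dual of the defining sequence $0 \to V_1 \otimes \calO(-1) \to V_3 \otimes \calO \to E \to 0$ on $\bP V_2$ gives an embedding
\[
H^0(\bP E, \calO(3k, 2k)) = H^0(\bP V_2, \calO(3k) \otimes \Sym^{2k} E^*) \hookrightarrow \Sym^{3k} V_2^* \otimes \Sym^{2k} V_3^*.
\]
The key identification is that this image is precisely the $U$-invariants: a polynomial $P(q,f)$ lies in $H^0(\calO(3k) \otimes \Sym^{2k} E^*)$ iff for each $q$ the polynomial $P(q,\cdot)$ factors through the quotient $V_3 \twoheadrightarrow V_3/qV_1 = E_q$, equivalently $P(q, f) = P(q, f + vq)$ for all $v \in V_1$. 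Taking $\SL(4)$-invariants and observing that on bidegree $(3k, 2k)$ the central $\bC^*$ acts trivially (so $\SL(4)$- and $L$-invariance coincide) shows both spaces equal $R'_{5k}$.

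Unwinding the local definition of $\phi^*$ on $\Delta_0^{\circ\circ}$ confirms that it realizes this common identification degree by degree (the degree-$k$ piece on the source maps to the bidegree-$(3k,2k)$ piece on the target). Passing to $\Proj$ therefore yields $\bP E \gquot_{\calO(3,2)} \SL(4) \cong \Proj R' \cong \overline{M}_4^{GIT}$, the last isomorphism by Proposition~\ref{FiniteGeneration}. The main obstacle is the identification of $H^0(\bP E, \calO(3k, 2k))$ with $(\Sym^{3k} V_2^* \otimes \Sym^{2k} V_3^*)^U$: the containment is immediate from the tautological embedding $\Sym^{2k} E^* \hookrightarrow \Sym^{2k} V_3^* \otimes \calO$, but the equality requires the fiberwise factorization argument together with careful bookkeeping of the various line bundle twists---in particular verifying the crucial pullback formula $\phi^* \eta = \phi^* h = \calO_{\Delta_0}(1)$ on $\Delta_0^{\circ\circ}$.
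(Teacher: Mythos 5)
Your proof is correct and follows essentially the same strategy as the paper's: both peel off the center $\mathbb{C}^*$ (yielding the bidegree constraint $(3k,2k)$), then the unipotent radical $\mathbb{C}^4$ (yielding $U$-invariance, i.e.\ sections factoring through $E_q=V_3/qV_1$), and finally $\SL(4)$. The paper packages this as the stage-wise quotient $\Delta_0\gquot\mathbb{C}^*\cong \bP V_2\times\bP V_3$, then $\gquot\mathbb{C}^4\cong\bP E$, then $\gquot\SL(4)$, whereas you realize the same three invariance conditions by direct identification of subspaces of $\Sym^{3k}V_2^*\otimes\Sym^{2k}V_3^*$ together with the Hartogs extension and the pullback computation $\phi^*\calO(3,2)\cong\calO_{\Delta_0}(5)$; the underlying algebraic content is the same.
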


\begin{proof}
Since we define our GIT quotients as $\Proj $ of rings of invariant sections, it is immediate to see that the following holds: {\it if $G$ is a group acting on a quasi-projective variety $X$, and $H$ is a normal subgroup, then $X\gquot G\cong (X\gquot H)\gquot (G/H)$}.   

In our situation, we have $G'\subset \SL(5)$ is the stabilizer  of a point and thus
$$G'= \left \{ \left(\begin{matrix} c& \vec{v}\\0& A \end{matrix}\right), \ A\in \SL(4),\ c=(\det A)^{-1}\in \bC^*,\ \vec{v}\in \bC^4\right\}.$$ 
Thus, we have the center $Z(G')\cong \bC^*$, and then $G'/Z(G')$ is a semidirect product $\bC^4 \rtimes \SL(4)$ (up to isogeny). From the discussion of the previous paragraph, it follows that we can understand $\Delta_0\gquot G'$ in three steps: first we quotient by the center $\bC^*$, then by the unipotent radical $\bC^4$, and finally by the reductive group $\SL(4)$.  

For the first step, we claim that there is a natural isomorphism
$$ \Delta_0 \gquot \mathbb{C}^* \cong \bP V_2 \times \bP V_3 $$
which identifies the line bundle $\mathcal{O} (1)$ on $\Delta_0$ with $\mathcal{O} (3,2)$ on $\bP V_2 \times \bP V_3$ (where $V_i=H^0(\bP^3,\calO_{\bP^3}(i))$). Note that the $\mathbb{C}^*$-action  on $\Delta_0\cong \bP^{29}$ is given by
$$ t \cdot (x_0 q + f) = t^{-2} x_0 q + t^3 f,$$
where $q$ and $f$ are homogeneous forms in $(x_1,\dots,x_4)$ of degree $2$ and $3$ respectively. The identification  $ \Delta_0 \gquot \mathbb{C}^* \cong \bP V_2 \times \bP V_3 $ then follows from a straightforward identification of the (semi-)stable locus (compare \cite[Ex. 2.5]{kirwannr}).
To see that the given line bundles are identified, we consider the pullback map
$$ H^0 ( \bP V_2 \times \bP V_3 , \mathcal{O} (a,b)) \to H^0 ( \Delta_0 , \mathcal{O} (a+b)) $$
and note that the image is invariant under the action of $\mathbb{C}^*$ if and only if $2a=3b$. 

For the second step, i.e. the quotient by the action of $\bC^4$, we note that the action is given explicitly by $(\alpha_1,\dots,\alpha_4)\in \bC^4$ acts on $(q,f)\in\bP V_2\times \bP V_3$ by
\begin{equation}\label{action}(\alpha_1,\dots,\alpha_4)\cdot (q,f)\to \left(q,f+q\cdot \sum \alpha_i x_i\right).\end{equation} 
Thus, the quotient of $\bP V_2\times \bP V_3$ by $\bC^4$ corresponds is the space of pairs $(q, f\mod q)$. We have already considered this space; it is $\bP E$ in the notation of \S\ref{sectpe}. In other words, the natural map  $\bP V_2 \times \bP V_3 \dashrightarrow \mathbb{P}E$ (which is regular as long as $q\not |f$)  is in fact the quotient map for the action of $\bC^4$. The choice of line bundle $\calO(a,b)$ is relevant for the choice of scaling factor for $q$ and $f$ in equation \eqref{action}.

The final step is the natural quotient by $\SL(4)$. We conclude, 
\begin{eqnarray*}
\Delta_0\gquot G'&\cong& \left((\Delta_0\gquot \bC^*)\gquot \bC^4\right))\gquot \SL(4)\\
&\cong&\left( \bP V_2\times \bP V_3\gquot_{\calO(3,2)} \bC^4\right)\gquot\SL(4)\\
&\cong& \bP(E)\gquot_{\calO(3,2)} \SL(4)
\end{eqnarray*}
as needed. We reiterate that all the isomorphisms above should be understood in the sense of rings of invariant sections.
\end{proof}

\subsection{The GIT quotient of the Chow variety}\label{sectchow}
A standard way of constructing models for moduli of curves is to consider 
GIT quotients of Chow varieties parameterizing (pluri)canonical curves (and their degenerations). For example, Mumford  \cite{mumford} constructed $\overline{M}_g$ as a projective variety in this way. Similarly, Schubert \cite{Schubert} obtained the pseudostable curve model $\overline{M}_g^{ps}$. More recently Hassett--Hyeon \cite{hh2} gave another model $\overline{M}_g^{cs}$ of $\overline{M}_g$ using appropriate quotients of Chow varieties. Here, we show that our model $\overline M_4^{GIT}$ is in fact the GIT quotient of the Chow variety $\Chow_{4,1}$ associated to canonical curves in $\bP^3$. We note that partial results on $\Chow_{4,1}\gquot \SL(4)$ were  obtained by H. Kim \cite{kimg4}.

We start our discussion by recalling some basic facts about quotients of Chow varieties (this is mostly based on \cite{mumford}). Let $X$ be a variety of dimension $r$ in $\bP^N$.  For a 1-parameter subgroup $\lambda$ of $\SL (N+1)$, we will write $x_i$ for homogeneous coordinates on $\bP^N$ that diagonalize $\lambda$.  Then there is a set of nonnegative integers $r_i$ such that $\lambda (t) x_i = t^{(N+1)r_i - \sum r_i} x_i$ (N.B. this differs from the other standard convention for the ``weights'' of a 1-PS).  Let $\alpha : \tilde{X} \to X$ be a proper birational morphism of varieties and $X' = \tilde{X} \times \mathbb{A}^1$.  Furthermore, let $\mathcal{I}$ be the ideal sheaf of $\calO_{X'}$ defined by
$$ \mathcal{I} \cdot [ \alpha^* \calO_X (1) \otimes \calO_{\mathbb{A}^1} ] = \text{ subsheaf generated by } t^{r_i} \alpha^* x_i $$
Next consider the function
$$ p(n) = \chi ( \calO_{X'} (n)/ \mathcal{I}^n \calO_{X'} (n) ) $$
For $n$ sufficiently large, $p(n)$ is a polynomial of degree $r+1$.  We write $e_{\lambda} (X)$ for the normalized leading coefficient of $f$, i.e.  the integer such that
$$ p(n) = e_{\lambda} (X) \frac{n^{r+1}}{(r+1)!} + \text{ lower order terms}. $$
We will use the following result due to Mumford \cite{mumford}:
{\it A Chow cycle $X$ is semistable if and only if
\begin{equation}\label{thmmum}
 e_{\lambda} (X) \leq \frac{r+1}{N+1} \deg (X) \sum r_i 
\end{equation}
for every one-parameter subgroup $\lambda$}.

In the case of genus $4$ canonical curves, we let $\Chow_{4,1}$ be the associated Chow variety and consider the natural GIT quotient $\Chow_{4,1}\gquot \SL(4)$. We first note:

\begin{proposition}
\label{ChowCompleteIntersection}
Every Chow semistable curve $C$ is the complete intersection of a quadric and a cubic.  If $C$ is Chow stable, then the quadric is irreducible.
\end{proposition}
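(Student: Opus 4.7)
The strategy is to apply Mumford's numerical criterion \eqref{thmmum}. For a Chow cycle $C$ of degree $6$ in $\bP^3$, we have $r=1$, $N=3$, so the semistability inequality reads $e_\lambda(C) \le 3 \sum_i r_i$ for every 1-PS $\lambda$ of $\SL(4)$. Both assertions of the proposition are proved by exhibiting destabilizing 1-PSs whenever they fail.

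For the first assertion, I would proceed by contradiction: suppose $C$ is Chow semistable but $h^0(I_C(2)) = 0$, so the restriction $H^0(\calO_{\bP^3}(2)) \to H^0(\calO_C(2))$ is injective. Pick a point $p \in C$ of maximal multiplicity and coordinates with $p = (1{:}0{:}0{:}0)$, and test the 1-PS $\lambda$ with weights $(0,1,1,1)$ (or a related weighted flag adapted to the local geometry of $C$ at $p$). Using the flag-filtration formula of \cite[Ch.~4]{git} to express $e_\lambda(C)$ in terms of degrees of successive projections of $C$, the failure of any quadric to contain $C$ concentrates weight against $x_0$ and yields $e_\lambda(C) > 3\sum r_i$, contradicting semistability. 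Once $C \subset Q$ is established, $C$ is an effective divisor on $Q$ with degree $6$ and arithmetic genus $4$; on a smooth quadric $Q \cong \bP^1 \times \bP^1$ this forces $C \in |\calO_Q(3,3)|$, and the analogous numerical computation on a quadric cone or on a reducible quadric shows in every case that $C$ is cut out on $Q$ by a cubic hypersurface $S \subset \bP^3$, whence $C = Q \cap S$.

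For the second assertion, suppose $C$ is Chow stable but $Q = H_1 \cup H_2$ is the union of two planes. Choose coordinates so that $H_1 = \{x_3 = 0\}$ and take the 1-PS $\lambda$ with weights $(0,0,0,1)$, which fixes $H_1$ pointwise. Because the support of $C$ is contained in $H_1 \cup H_2$, the Chow form of $C$ factors compatibly with $\lambda$, and the flag-filtration formula gives $e_\lambda(C) = 3 = 3\sum r_i$, saturating Mumford's inequality. This exhibits $C$ as strictly semistable, not stable---in perfect analogy with the $D_4$ case of Allcock's Theorem~\ref{thmallcock}, consistent with our eventual identification of the two GIT quotients.

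The principal obstacle is the explicit verification of the strict inequality $e_\lambda(C) > 3\sum r_i$ in the first step. This requires careful analysis of the Chow form via Mumford's flag-filtration formula, with case distinctions according to the support type of the cycle $C$ (irreducible versus reducible, reduced versus non-reduced), and it must handle cycles that share the Hilbert polynomial of a $(2,3)$-complete intersection but whose support does not lie on a quadric.
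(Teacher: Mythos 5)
Your overall strategy---Mumford's numerical criterion together with destabilizing one-parameter subgroups---matches the paper, but the organization differs substantially and your plan contains a genuine gap, which you yourself flag. The paper's proof is short and never tries to prove that $C$ lies on a quadric: since $\Chow_{4,1}$ is the closure of the locus of cycles of canonical genus-$4$ curves, every cycle in it lies on a quadric. The paper instead observes that if $C$ is not a complete intersection then $C$ lies only on a \emph{reducible} quadric $H_1\cup H_2$, writes $C=C_1+C_2$ with $C_i\subset H_i$ and $\deg C_1\ge\deg C_2$, and tests a single 1-PS, $r=(0,1,1,1)$ with $H_1=V(x_0)$. The key technical input is Schubert's Lemma~1.2, which gives $e_\lambda(C)\ge 2\deg C_1+\deg(C_2\cap H_1)=6+\deg C_1$; the semistability bound $e_\lambda(C)\le 3\sum r_i=9$ then forces $\deg C_1=\deg C_2=3$, and since equality $e_\lambda(C)\ge 9$ is achieved the same 1-PS rules out stability. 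One 1-PS handles both assertions.

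Your plan introduces work the paper avoids and omits the crucial citation. Your contradiction hypothesis $h^0(I_C(2))=0$ requires a scheme structure to even be stated for a Chow cycle, as does the arithmetic-genus argument in your second paragraph; and the case is moot since $C\in\Chow_{4,1}$ already lies on a quadric. You do not cite Schubert's lemma, which is exactly the lever that makes the $(0,1,1,1)$-type estimate a one-liner; the ``principal obstacle'' you identify (verifying the strict inequality $e_\lambda(C)>3\sum r_i$) is precisely the computation the paper discharges by that citation. Your middle step---classifying degree-$6$ divisors on each type of quadric by arithmetic genus---is not in the paper and is not intrinsic to cycles. Finally, your separate 1-PS $r=(0,0,0,1)$ for irreducibility is both redundant (the paper's single 1-PS already rules out stability when $\deg C_1=3$) and unverified: the claim $e_\lambda(C)=3$ from ``the Chow form factors compatibly'' is asserted rather than computed, and it weights the hyperplane $V(x_3)$ with the larger $r$-weight, the opposite of the paper's choice, so a Schubert-type bound would need a different argument.
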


\begin{proof}
If $C$ is not a complete intersection, then it is contained in a reducible quadric.  It follows that $C= C_1 + C_2$, where each $C_i$ is contained in a hyperplane $H_i$.  Without loss of generality, assume that $\deg ( C_1 ) \geq \deg ( C_2 )$.  Choose coordinates such that the hyperplane $H_1$ is cut out by $x_0$ and consider the 1-PS with weights $(0,1,1,1)$.  By  \cite[Lem. 1.2]{Schubert}, we know that
$$ e_{\lambda} (C) \geq 2 \deg ( C_1 ) + \deg (C_2 \cap H_1 ) = 6 + \deg ( C_1 ). $$
If $C$ is semistable,  we must have from \eqref{thmmum}
$$ 6 + \deg ( C_1 ) \leq 9.$$
It follows that $\deg ( C_1 ) = \deg ( C_2 ) = 3$, so $C$ is the union of two plane cubics.
\end{proof}

Note that there is a natural birational map 
$$\varphi: \mathbb PE \dashrightarrow \Chow_{4,1}$$ 
induced from the Hilbert-Chow morphism. We now can prove the main result of the section.

\begin{theorem}\label{chowcubic}
The pull-back of sections via  $\varphi$ induces an isomorphism
$$
R(\Chow_{4,1},\mathcal{O}_{\Chow_{4,1}} (1))^{\SL(4)}\to R(\mathbb PE,\mathscr O(3,2))^{\SL(4)}.
$$
Thus, $\Chow_{4,1} \gquot \SL (4)\cong \overline M_4^{GIT}$.
\end{theorem}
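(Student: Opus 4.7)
The strategy is to verify that pullback along the birational map $\varphi: \mathbb{P}E \dashrightarrow \Chow_{4,1}$ induces the stated isomorphism of graded invariant rings, and then take $\Proj$. First, I would show that $\varphi$ is defined off a locus of codimension $\geq 2$ in $\mathbb{P}E$. Since $\mathbb{P}E$ parameterizes $(2,3)$-schemes, the Hilbert-Chow map is regular wherever the scheme is pure of dimension one with Hilbert polynomial $6t-3$. The failure locus is contained in the set where the defining quadric and cubic share a common irreducible component; this requires both that the quadric is non-integral (codimension $\geq 1$) and that the cubic lies in a further proper subspace, so it has codimension $\geq 2$.

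Next, I would identify the polarizations: the claim is $\varphi^* \mathcal{O}_{\Chow_{4,1}}(1) \cong \mathcal{O}_{\mathbb{P}E}(3,2)$ on the regular locus. Since $\Pic(\mathbb{P}E) = \mathbb{Z}\eta \oplus \mathbb{Z}h$, the pullback must be of the form $\mathcal{O}(a,b)$, and $a,b$ can be pinned down by restriction to the two standard test curves used in the proof of Proposition \ref{computeclasses} (fix a generic smooth quadric and vary the cubic in a pencil, and vice versa). For a $(2,3)$-complete intersection, the Chow form on the Grassmannian of lines in $\mathbb{P}^3$ is a bidegree $(6,6)$ expression of resultant type; the variation along each test pencil is computable directly, and alternatively one can use the known relation of the Chow class to $\lambda$ and $\delta$ on $\overline M_4$ and then translate via Proposition \ref{computeclasses}. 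Either route yields $(a,b)=(3,2)$, up to an overall positive integer scaling that is harmless at the level of $\Proj$.

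With the polarizations identified, smoothness of $\mathbb{P}E$ combined with the codimension $\geq 2$ bound lets me extend any pullback section across the indeterminacy locus, producing an $\SL(4)$-equivariant graded homomorphism
\[
\varphi^*\colon R(\Chow_{4,1},\mathcal{O}(1))^{\SL(4)} \longrightarrow R(\mathbb{P}E, \mathcal{O}(3,2))^{\SL(4)},
\]
which is injective by birationality of $\varphi$. For surjectivity, I would exploit the inverse rational map $\psi: \Chow_{4,1} \dashrightarrow \mathbb{P}E$. By Proposition \ref{ChowCompleteIntersection}, every semistable point of $\Chow_{4,1}$ is a $(2,3)$-complete intersection, hence $\psi$ is regular on the semistable locus. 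Working with the normalization $\Chow_{4,1}^{\nu}$ if needed (which is harmless, since $\Chow_{4,1}\gquot \SL(4)$ can be recovered as $\Proj$ of the invariant ring, and invariant sections on the normalization agree with invariants on $\Chow_{4,1}$ outside the codimension $\geq 2$ unstable locus), any invariant section on $\mathbb{P}E$ pulls back along $\psi$ to an invariant section on $\Chow_{4,1}$ mapping to the original under $\varphi^*$.

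Taking $\Proj$ of the resulting isomorphism of invariant rings then gives
$\Chow_{4,1}\gquot \SL(4) \cong \mathbb{P}E\gquot_{\mathcal{O}(3,2)} \SL(4) \cong \overline M_4^{GIT}$,
the last identification being Proposition \ref{GITIdentification}. The main obstacle is the polarization identification in the second step: it is the only nontrivial input, and the cleanest way to handle it is the test-curve computation combined with Proposition \ref{computeclasses}; surjectivity of $\varphi^*$ is essentially automatic once stability controls the domain of the inverse map $\psi$.
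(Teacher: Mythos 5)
Your proposal is correct and follows essentially the same route as the paper: identify the pullback of the Chow polarization as $\mathcal{O}(3,2)$ (the paper does this via the Hassett--Hyeon formula $A \sim 9\lambda - \delta$ and Proposition \ref{computeclasses}, one of the two routes you mention), use the codimension-$\geq 2$ estimate on the complement of the complete-intersection locus $U \subset \mathbb{P}E$ to extend sections, and use Proposition \ref{ChowCompleteIntersection} to get $\Chow_{4,1}^{ss} \subset \varphi(U)$ so that invariant sections transfer in both directions. The one thing to trim: your parenthetical asserting that the unstable locus in $\Chow_{4,1}$ has codimension $\geq 2$ is neither justified nor needed --- the surjectivity argument only requires that semistable cycles are complete intersections, which is exactly what Proposition \ref{ChowCompleteIntersection} provides.
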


\begin{proof}
The  map $\varphi$ is regular along the open set $U \subset \mathbb PE$ of pairs $(q , f )$ where $q$ and $f$ do not share a common factor.  We first show that if $A = \mathcal{O}_{\Chow} (1)$, then the pullback of the ample class $\varphi^* A$ is linearly equivalent to a multiple of $\mathcal{O} (3,2)$.   Recall that Hassett-Hyeon have shown that $A$ corresponds to a multiple of $9\lambda -\delta$ at the level of $\overline M_4$ (\cite[Prop. 5.2]{hh2}).  It then follows from Proposition \ref{computeclasses} that $\varphi^*A=3\eta+2h$.  

Now, by Proposition \ref{ChowCompleteIntersection} we know that $\Chow_{4,1}^{ss} \subset \varphi(U)$, and we also observe that the complement of $U$ has codimension at least $2$ in $\mathbb PE$.  Hence the restriction maps
$$ H^0 ( \Chow_{4,1} , A^{\otimes n})^{\SL(4)} \tilde \to H^0 ( \varphi (U) , A^{\otimes n})^{\SL(4)} $$
$$ H^0 ( \mathbb PE , \mathcal{O} (3,2)^{\otimes n})^{\SL(4)} \tilde \to H^0 ( U , \varphi^* A^{\otimes n})^{\SL(4)} $$
are isomorphisms.    Since $ H^0 ( \varphi (U) , A^{\otimes n})^{\SL(4)} =H^0 ( U , \varphi^* A^{\otimes n})^{\SL(4)} $, the conclusion follows.
\end{proof}

\section{Stability for canonical genus $4$ curves}\label{sectstab}
From Allcock's theorem (Theorem \ref{thmallcock}) and the discussion of section \ref{sectprelim}, it is easy to describe the curves corresponding to the points of $\overline M_4^{GIT}$. Specifically, here we prove Theorem \ref{thmcubic}, which gives a complete description of the stability for the natural GIT quotient $\Chow_{4,1}\gquot \SL(4)\cong \overline M_4^{GIT}$. We note that our stability computation agrees with the partial analysis of Kim \cite{kimg4} (who makes a direct computation of the stability conditions on $\Chow_{4,1}$).

To state our result, we define the  family
$$
C_{A,B}=(x_3^2-x_2x_4, Ax_2^3+Bx_1x_2x_3+x_1^2x_4),
$$
(for $(A,B)\in \mathbb C^2\setminus \{(0,0)\}$) and the scheme (not of type $C_{A,B}$) defined by
$$
C_D=(x_1x_2,x_3^3+x_4^3),
$$
induced by the associated cubics considered by Allcock.
We also introduce the scheme (not of type $C_{A,B}$ or $C_D$) defined by
$$
C_{2A_5}=(x_1x_4-x_2x_3,x_1x_3^2+x_2^2x_4).
$$
This is the curve obtained from projecting $F_{0,1}$ from the $A_1$ singularity at the point $(0:0:1:0:0)$.
Note that it is elementary to check that each scheme  $C_{A,B}$ is singular at the points $q=(1:0:0:0)$ and  $q'=(0:0:0:1)$.  From the connection between singularities of cubic threefolds and the associated $(2,3)$-schemes, it follows that if $q,q'$ are isolated singularities, then the singularity at $q'$ is of type $A_5$ and the singularity at $q$ is of type $A_3$.

We now conclude with the following description of $\overline{M}_4^{GIT}$.  

\begin{theorem} \label{thmcubic} The stability conditions for the quotient $\Chow_{4,1}\gquot \SL(4)\cong \overline M_4^{GIT}$ are described as follows:
\begin{itemize}
\item[(0)] Every semistable point $c\in \Chow_{4,1}$ is the cycle associated to a $(2,3)$-complete intersection in $\mathbb P^3$. The only non-reduced $(2,3)$-complete intersections that give a semi-stable point $c\in \Chow_{4,1}$ are the genus $4$ ribbons (all with associated cycle equal to the twisted cubic with multiplicity $2$).  
 \end{itemize} 
Assume now  $C$ is a reduced  $(2,3)$-complete intersection in $\mathbb P^3$, with associated point $c\in \Chow_{4,1}$. Let $Q\subseteq \mathbb P^3$ be unique quadric containing $C$. Then the following hold:
 \begin{itemize}  
 \item[(0')] $c$ is unstable if $C$ is the intersection of a quadric and cubic that are simultaneous singular. Thus, in items (1) and (2) below we can assume $C$ has only hypersurface singularities. 
 \item[(1)] $c$ is stable if and only if $\rank Q\ge 3$ and $C$ is a curve with  at worst $A_1,\ldots,A_4$ singularities at the smooth points of $Q$ and at worst an $A_1$ or $A_2$ singularity at the vertex of $Q$ (if $\rank Q=3$). 
 \item[(2)] $c$ is strictly semi-stable if and only if
 \begin{itemize} \item[i)] $\rank Q=4$  and 
\begin{itemize} \item[($\alpha$)] 
$C$ contains a singularity of type $D_4$ or $A_5$, or,

\item[($\beta$)] 
$C$ contains a singularity of type $A_k$, $k\ge 6$, but does not contain such a singularity on a component $C'$ contained in a plane, or, 
 \end{itemize}

  \item[ii)] $\rank Q=3$,   $C$ has at worst an $A_k$, $k\in \mathbb N$, singularity at the vertex of $Q$  and   
  \begin{itemize}

  \item[($\alpha$)] $C$ contains a $D_4$ or an $A_5$ singularity at a smooth point of $Q$ or an $A_3$ singularity at the vertex of $Q$, or, 
  
  \item[($\beta$)] $C$ contains a singularity of type $A_k$, $k\ge 6$, at a smooth point of $Q$ or a singularity of type $A_k$, $k\ge 4$, at the vertex of $Q$, but does not contain such a singularity on a component $C'$ contained in a plane, or, 
  \end{itemize}
  \item[iii)] $\rank Q=2$ and $C$ meets the singular locus of $Q$ in three distinct points.  
\end{itemize} 
\end{itemize} 
Finally, the minimal orbits of the strictly semi-stable points are described as follows:
\begin{itemize}
  \item[(3)] The minimal orbits of strictly semi-stable $c$ are the orbits of the cycles associated to the $(2,3)$-subschemes given by  $C_{2A_5}$, $C_D$ and the $C_{A,B}$. 
  \end{itemize}
 In particular, the GIT boundary consists of $2$ isolated points (corresponding to $C_{2A_5}$ and $C_D$) and a rational curve $Z$(parametrizing the orbits of $C_{A,B}$). The orbit of the double twisted cubic corresponds to a special point of $Z$ (corresponding to $C_{A,B}$ with  $\frac{4A}{B^2}=1$).
\end{theorem}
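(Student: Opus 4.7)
The plan is to deduce Theorem~\ref{thmcubic} by transferring Allcock's stability classification (Theorem~\ref{thmallcock}) for cubic threefolds through the chain of identifications
\[
\Chow_{4,1} \gquot \SL(4) \;\cong\; \mathbb{P}E \gquot_{\calO(3,2)} \SL(4) \;\cong\; \Delta_0 \gquot G' \;\cong\; \Delta^\nu \gquot G
\]
furnished by Theorem~\ref{chowcubic}, Proposition~\ref{GITIdentification}, and Proposition~\ref{FiniteGeneration}. Since the normalization $\Delta^\nu\to \Delta$ is finite and surjective, a point of $\Delta^\nu$ is stable (resp.\ semi-stable, poly-stable) if and only if its image in $\Delta$ is (\cite[Thm.~1.19]{git}). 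So for any $(2,3)$-scheme $C=V(q,f)$ with associated cubic $X=V(x_0q+f)$, the GIT stability of $C$ in $\Chow_{4,1}$ matches the GIT stability of $X$ in $\calH_{cub}$, and the task reduces to translating Allcock's criteria into statements about $C$ via Proposition~\ref{comparesing}.

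For items (0) and (0'), Proposition~\ref{ChowCompleteIntersection} already gives that every Chow semi-stable curve is a $(2,3)$-complete intersection; the only non-reduced such cycle supported on a smooth curve is the twisted cubic ribbon described in Lemma~\ref{lemchordal}, which arises as the $(2,3)$-object associated to the chordal cubic. If $V(q)$ and $V(f)$ are simultaneously singular at some $y$, then by Proposition~\ref{comparesing}(iv) the line $\overline{py}\subset X$ is singular, so $X$ has non-isolated singularities and is not chordal; thus $X$ is unstable by Theorem~\ref{thmallcock}(4)(a), proving (0'). For (1) and (2), note that by Proposition~\ref{comparesing} the singularities of $\Bl_p X$ are in type-preserving bijection with those of $C$, while the singularity of $X$ at $p$ itself is determined by the rank of $Q$ and the behaviour of $C$ at the vertex: an $A_1$ for $\rank Q=4$; an $A_2$ for $\rank Q=3$ with $C$ avoiding the vertex; an $A_k$ ($k\ge 3$) for $\rank Q=3$ and an $A_{k-2}$ singularity of $C$ at the vertex; a $D_4$ for $\rank Q=2$ meeting the singular line in three points; worse in the remaining rank $\le 2$ cases. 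Comparing with Theorem~\ref{thmallcock}, the condition ``$X$ has at worst $A_1,\ldots,A_4$ singularities'' becomes exactly the statement in (1), and each of the strictly semi-stable cases (a)--(d) of Allcock translates, by asking \emph{where} on $X$ the $D_4$, $A_5$, or $A_{k\ge 6}$ singularity lives (at $p$ versus away from $p$), into the subcases of (2).

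For item (3), we push the minimal orbits of Theorem~\ref{thmallcock} through $\Delta_0\dashrightarrow \bP V_2\times \bP V_3\to \bP E$: the cubics $F_{A,B}$ and $F_D$ go directly to $C_{A,B}$ and $C_D$. The subtlety is $F_{0,B}$, which has an additional $A_1$ singularity at $(0\!:\!0\!:\!1\!:\!0\!:\!0)\ne p$; projecting $F_{0,1}$ from that extra node produces the curve $C_{2A_5}$, which is not $\SL(4)$-equivalent to any $C_{A,B}$, but represents the \emph{same} orbit in $\Delta^\nu\gquot G$ because it arises from a different sheet of the normalization over the same orbit in $\Delta\gquot G$. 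Finally, the chordal cubic $F_c=F_{1,2}$ corresponds under Lemma~\ref{lemchordal} to the double twisted cubic ribbon, which is precisely the limit of $C_{A,B}$ as $4A/B^2\to 1$; this identifies the distinguished point of the rational curve $Z\subset \overline{M}_4^{GIT}$ swept out by the $C_{A,B}$ orbits. I expect the main obstacle to be the careful book-keeping in this last paragraph: verifying that $C_{2A_5}$ is not in the closure of any $C_{A,B}$-orbit inside $\mathbb{P}E$ but does become identified with the appropriate $C_{0,B}$ in $\overline{M}_4^{GIT}$, and verifying that the ribbon from Lemma~\ref{lemchordal} is the actual Chow-theoretic limit of the $C_{A,B}$ (rather than some other non-reduced degeneration), since this is the one place where the normalization $\Delta^\nu\to\Delta$ genuinely identifies geometrically distinct $(2,3)$-schemes.
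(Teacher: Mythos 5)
Your overall strategy---transferring Allcock's stability analysis through the chain of identifications $\Chow_{4,1}\gquot\SL(4)\cong\Delta^\nu\gquot\SL(5)$ and then translating via Proposition~\ref{comparesing}---is exactly the route taken in the paper, and your treatment of items (0), (0'), (1), (2) is sound in outline (though the paper is a bit more careful than you in explicitly running the invariant-section argument that shows semistability of $c\in\Chow_{4,1}$ is equivalent to semistability of the associated $x\in\Delta^\nu$, rather than leaving it implicit in ``the chain of identifications'').

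There is, however, a genuine error in your final paragraph: you have the logic of the normalization backwards. You claim that $C_{2A_5}$ ``represents the same orbit in $\Delta^\nu\gquot G$'' as $C_{0,B}$ ``because it arises from a different sheet of the normalization over the same orbit in $\Delta\gquot G$,'' and later that $C_{2A_5}$ ``does become identified with the appropriate $C_{0,B}$ in $\overline{M}_4^{GIT}$.'' This is false, and contradicts the theorem you are trying to prove. Normalization \emph{separates} branches: distinct sheets of $\Delta^\nu\to\Delta$ over a cubic with several $\SL(5)$-orbits of singular points yield \emph{distinct} points of $\Delta^\nu$, and hence distinct points of $\Delta^\nu\gquot G\cong\overline{M}_4^{GIT}$. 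Concretely, $F_{0,B}$ has two orbits of singular points (the $\tau$-conjugate pair of $A_5$'s at $p,p'$, and the lone $A_1$ at $(0\!:\!0\!:\!1\!:\!0\!:\!0)$), and these give the two distinct boundary points $[C_{0,B}]\in Z$ and the isolated point $[C_{2A_5}]$. Far from being a subtlety to be argued away, this separation is precisely \emph{why} the extra minimal orbit $C_{2A_5}$ appears in item (3), and why the space is defined through the normalization in the first place. Your closing remark that the normalization ``genuinely identifies geometrically distinct $(2,3)$-schemes'' in the ribbon case is similarly misstated: the chordal cubic has a single $\SL(5)$-orbit of singular points (the whole rational normal quartic), so there is nothing there for the normalization to identify or separate, and the double twisted cubic arises unambiguously (cf.\ Lemma~\ref{lemchordal}). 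You should repair the argument for (3) by recognizing that one $\SL(5)$-orbit in $\Delta\gquot G$ can pull back to \emph{several} orbits in $\Delta^\nu\gquot G$, one per orbit of marked singular points, and enumerating these for $F_D$, $F_{A,B}$ with $4A/B^2\neq 0,1$, $F_{0,B}$, and the chordal cubic.
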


\begin{proof}
The first part of item (0) is the content of Proposition \ref{ChowCompleteIntersection}. This allows us to restrict to the locus $U\subset \bP E$ corresponding to $1$-dimensional $(2,3)$-schemes (complete intersections). As before, we have a cycle map $\varphi: \bP E\dashrightarrow \Chow_{4,1}$, which is regular along $U$, and in fact an isomorphism on the open $V\subset U$ corresponding to curves (i.e. reduced $(2,3)$-complete 
intersections). We then obtain an essentially one-to-one correspondence between orbits $\SL(4)\cdot c\subset \Chow_{4,1}$ and orbits $\SL(5)\cdot x\subset \Delta^\nu$. Specifically, if  $c\in \varphi(V)\subset \Chow_{4,1}$, we can associate to it a unique $(2,3)$-curve $C$, and then to $C$ a cubic threefold $(X,p)$ with a marked singularity, and finally  a point $x\in \Delta^\nu$ (via the   natural map $\widetilde\Delta\to \Delta^\nu$, see \S\ref{nrquotient}). Conversely, if $(X,p)$ is not too singular (e.g. $X$ is semistable) we can reverse the process and associate a $(2,3)$-scheme $C$ and a point  $c\in \Chow{4,1}$.   One checks from the definitions that the $\SL(4)$ orbit of $c$ is identified with the $\SL(5)$ orbit of $x$.  

The only ambiguity arising  in this association between orbits of points $c\in \Chow_{4,1}$ and orbits of points in $x\in \Delta^\nu$ is when $c\in \varphi(U)\setminus \varphi(V)$. In this situation, we 
choose $C$ to be an arbitrary $(2,3)$-scheme corresponding to the cycle $c$ and then associate to it $x\in \Delta^\nu$ as before.   In fact, the only non-reduced $(2,3)$-schemes that we will need to examine are the doubled twisted cubics, and in this case the association is independent of the choices involved.  Indeed, note that if $c$ is the cycle corresponding to a double twisted cubic, the associated point $x$ corresponds to the associated chordal cubic and thus it is unambiguously defined (compare Lemma \ref{lemchordal}). Conversely, if $x\in \Delta^\nu$ is a point corresponding to a chordal cubic, we can choose an arbitrary lift $(X,p)$ and then associate $c\in \Chow_{4,1}$, which will be the cycle corresponding to the associated double twisted cubic. 

We now recall the following identifications of GIT quotients:
$$\overline{M}_4^{GIT}\cong \Delta^\nu\gquot \SL(5)\cong \Chow_{4,1}\gquot \SL(4)$$
which should be understood in terms of rings of invariant sections  (see Propositions \ref{FiniteGeneration} and \ref{GITIdentification} and Theorem \ref{chowcubic}). At the level of $\Delta^\nu$ the GIT stability is  described by Allcock's result (Theorem \ref{thmallcock}). Via the association of orbits $x\cdot \SL(5)\subset \Delta^\nu\longrightarrow c\cdot \SL(4)\subset \Chow_{4,1}$ described above (and Proposition \ref{comparesing}), we obtain stability conditions for $\overline{M}_4^{GIT}$ in terms of curves as stated in the theorem. The only remaining issue is to see that the stability conditions defined in this way  agree with  the stability conditions on $\Chow_{4,1}\gquot \SL(4)$ in the usual sense of GIT. In other words, we want to check that $c\in \Chow_{4,1}$ is semistable iff the associated point $x\in \Delta^\nu$ is semistable. 

Assume $c\in \Chow_{4,1}$ is semistable (in the standard GIT sense) and let $x\in \Delta^\nu$ the corresponding point. The semistability of $c$ is equivalent to  the existence of a non-vanishing section $\sigma\in H^0(\Chow_{4,1},A^{\otimes n})^{\SL(4)}$. Since $H^0(\Chow_{4,1},A^{\otimes n})^{\SL(4)}\cong H^0(\Delta^\nu,(L^\nu)^{\otimes n})^{\SL(5)}$ (at least after passing to suitable multiples), we obtain an $\SL(5)$-invariant section $\tau$. It is clear that $\sigma(c)\neq 0$ is equivalent to $\tau(x)\neq 0$; thus $x$ is semi-stable (as a cubic threefold)  giving that $c$ is as listed in the theorem. The converse (i.e. a semistable $x\in \Delta^\nu$ gives a semistable $c$) is also clear; this completes the proof of the semistability claims in the theorem. The only point to emphasize here is that the ambiguity (in the non-reduced case) in defining the correspondence $c\longrightarrow x$ does not cause a  problem here. Namely, as noted above, $c$ semistable gives $c\in \varphi(V)$. Then $\varphi^*A=\calO_{\bP E}(3,2)$ (see proof of Theorem \ref{chowcubic}); thus the section $\sigma$ can be regarded as an $\SL(4)$-invariant section of $\calO_{\bP E}(3,2)$. Clearly $\sigma(c)\neq 0$ is equivalent to $\sigma(C)\neq 0$ for every lift $C\in V\subset \bP E$ of $c\in \Chow_{4,1}$.

Finally, when restricted to semistable loci in $\Chow_{4,1}$, it is easy to see that stabilizer group for $c\in \Chow_{4,1}$ is the same (at least up to finite index) as the stabilizer of the associated cubic. Similarly, when restricted to the semistable loci, $c_0\in \overline {\SL(4)\cdot c}$ is equivalent to $x_0\in \overline {\SL(5)\cdot x}$ at the level of cubics (N.B. for this it suffices to check the statement for diagonal $1$-PS of $\SL(4)$ and $\SL(5)$ respectively). This allows us to conclude that the minimal orbits and stable points are as stated in the theorem.
\end{proof}

\begin{remark}\label{remcubic}
The following clarifies which schemes are parameterized by $C_{A,B}$, $C_D$, and $C_{2A_5}$. In each case we will use $Q\subseteq \mathbb P^3$ to denote the  defining quadric.    If $4A/B^2\ne 0,1$, then $C_{A,B}$ has exactly two singularities, one of type $A_3$ at $q$ (the vertex of $Q$) and one of type $A_5$ at $q'$ (a smooth point of $Q$).  If $4A/B^2=1$, then $C_{A,B}$ has exactly three singularities, one $A_1$ singularity, one $A_3$ singularity at $q$ (the vertex of $Q$), and one $A_5$ singularity at $q'$ (a smooth point of $Q$).  If $4A/B^2=1$, then $C_{A,B}$ is non-reduced, and has support equal to a rational normal curve.  $C_D$ has exactly five singularities, three of type $A_1$ and two of type $D_4$.  Finally, $C_{2A_5}$ has exactly two singularities, located at $q,q'$, both of type $A_5$ (and $Q$ is smooth); the curve has three irreducible components, each of which is a smooth rational curve, two of which are degree one (disjoint lines), and one of which has degree four and meets the other two lines (each in a single point).
\end{remark}

\begin{remark}  Allcock's theorem also describes the degenerations of the strictly semi-stable points $c\in\Chow_{4,1}$.  
   Let $C$ be a $(2,3)$-scheme with strictly semi-stable cycle $c\in \Chow_{4,1}$.  If $C$ contains a $D_4$ singularity, or lies on a rank $2$  quadric, then $c$ degenerates to the cycle associated to $C_D$.  If $C$ lies on a quadric $Q$ of rank at least $3$, and either $C$ contains an $A_5$ singularity at a smooth point of $Q$, or an $A_3$ singularity at the vertex of $Q$ (if $\rank Q=3$), then $c$ degenerates to either the cycle associated to  $C_{2A_5}$ or to the cycle associated to some $C_{A,B}$ with $4A/B^2\ne 1$.  Otherwise, $c$ degenerates to $C_{A,B}$ with $4A/B^2=1$,  a non-reduced complete intersection supported on a rational normal curve.
\end{remark}

\begin{remark}
We also have  an identification $\overline M_4^{GIT}=\bP E\gquot_{\calO(3,2)} \SL(4)$ (cf. Proposition \ref{GITIdentification}). To describe the points of $\overline M_4^{GIT}$ in terms of semi-stable points in $\bP E$, a little care is needed. The issue is that the line bundle $\calO(3,2)$ is not ample, e.g. it contracts the ribbon locus (to the double twisted cubic locus in $\Chow_{4,1}$). One natural definition for the semistable points on $\bP E$ is $(\varphi_{\mid U})^{-1}(\Chow_{4,1}^{ss})$. Alternatively, one can use the standard definition of GIT \cite{git}: a point is semistable if there is a non-vanishing invariant section $\sigma$ and the associated open set $\bP E _\sigma$ is affine. In our situation, it is easy to see that the two definitions agree for curves $C$ for which the associated cubic $X$ is semistable and such that  the orbit closure does not contain the orbit of the chordal cubic threefold (i.e. items (i.$\alpha$), (ii.$\alpha$), and (iii) from Theorem \ref{thmcubic}(2)). We will call such points essentially semistable, and denote by $(\bP E)^{ess}$ the corresponding set. For stable points, the two possible definitions agree; we let $(\bP E)^s$ be the set of stable points. The following clarifies $\overline M_4^{GIT}$ from the perspective of $\bP E$ and $\Chow_{4,1}$:
\begin{itemize}
\item[(0)] $\overline M_4^{GIT}\cong \bP E\gquot \SL(4)\cong \Chow_{4,1}\gquot \SL(4)$ is a normal projective variety.
\item[(1)] $(\bP E)^s/\SL(4)\cong \Chow_{4,1}^s/\SL(4)$ is a geometric quotient. In fact, $(\bP E)^s\cong \Chow_{4,1}^s$, and the stability is  described by Theorem \ref{thmcubic}(1).
\item[(2)] $(\bP E)^{ess}/\SL(4)$ is an orbit space in the usual sense of GIT, and the natural embedding $(\bP E)^{ess}/\SL(4)\subset \overline M_4^{GIT}$ is a one point compactification (the point corresponding to the double twisted cubic). 
\item[(3)] The boundary of $(\bP E)^s/\SL(4)$ in $\overline M_4^{GIT}$ consists of three components described by Theorem \ref{thmcubic}. 
\end{itemize}
\end{remark}

\section{Hassett--Keel Program}\label{sectHK}
The goal of the Hassett-Keel program is to provide modular interpretations of the log canonical models
$$ \overline{M}_g ( \alpha ) := \operatorname{Proj} \left( \bigoplus_{n=0}^{\infty} H^0 \left(n(K_{\overline{M}_g} + \alpha \delta ) \right) \right), \ \ \ \alpha\in [0,1]\cap \mathbb Q. $$
Hassett and Hyeon  have explicitly constructed the log minimal models $\overline{M}_g ( \alpha )$ for $\alpha \geq \frac{7}{10} - \epsilon$ (see \cite{hh,hh2}).  Hyeon and Lee have also described the next stage of the program in the specific case of genus $4$ (cf. \cite{HL2}). Finally, Fedorchuk \cite{maksymg4} has constructed the final nontrivial step in the Hassett--Keel program for $g=4$ by using GIT for $(3,3)$ curves on $\bP^1\times \bP^1$. In this section we identify the GIT quotient $\overline{M}_4^{GIT}$ with another log canonical model $\overline{M}_4 ( \alpha )$. The value of $\alpha$ corresponding to our space  $\overline{M}_4^{GIT}$ is intermediary between the slopes occurring in  \cite{HL2} and \cite{maksymg4} respectively. 

\begin{theorem}\label{thmhk}
$\overline{M}_4^{GIT} \cong \overline{M}_4 \left( \frac{5}{9} \right).$
\end{theorem}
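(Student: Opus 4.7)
The strategy is to match the natural polarization on $\overline{M}_4^{GIT}$ with a positive rational multiple of $K_{\overline{M}_4}+\tfrac{5}{9}\delta$, then to argue that the natural birational map $\phi:\overline{M}_4\dashrightarrow \overline{M}_4^{GIT}$ is a birational contraction, so that the $\Proj$ of the corresponding section ring recovers $\overline{M}_4^{GIT}$.

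On the numerical side, $\overline{M}_4^{GIT}\cong \Chow_{4,1}\gquot\SL(4)$ by Theorem \ref{chowcubic}, and in the proof of that theorem it was already noted (via Hassett--Hyeon \cite{hh2}, Proposition~5.2) that the pullback of the natural polarization $A=\calO_{\Chow_{4,1}}(1)$ to $\overline{M}_4$ is a positive rational multiple of $9\lambda-\delta$. Combined with the standard formula $K_{\overline{M}_4}=13\lambda-2\delta$, a direct computation yields
$$K_{\overline{M}_4}+\tfrac{5}{9}\delta \;=\; \tfrac{13}{9}(9\lambda-\delta),$$
so $\phi^{*}A$ is a positive rational multiple of $K_{\overline{M}_4}+\tfrac{5}{9}\delta$.

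To promote this numerical coincidence to an isomorphism of projective models, I would verify that $\phi$ is a birational contraction, i.e.\ that $\phi^{-1}$ extracts no divisor, and I would identify the divisors contracted by $\phi$. Theorem \ref{thmcubic} and the discussion in \S\ref{sectcubicg4} show that the two boundary divisors $\Sigma$ and $V$ of $\overline{M}_4^{GIT}$ are each the $\phi$-image of an honest divisor on $\overline{M}_4$ (namely $\delta_0$ and the vanishing-theta-null divisor), so no divisor is extracted. Conversely, $\phi$ collapses the boundary divisors $\delta_1$ and $\delta_2$, whose generic members have degenerate canonical image: an elliptic tail is contracted to an $A_2$ cusp on the Chow side (cf.\ Theorem \ref{thmcubic}(1) and Proposition \ref{comparesing}), while a generic genus-$2$ bridge degenerates to the curve $C_{2A_5}$ of Example \ref{exaC2A5}, which is an isolated point of $\overline{M}_4^{GIT}$. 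Since the contracted loci have image of codimension at least two, pullback along $\phi$ yields an isomorphism of section rings
$$\bigoplus_n H^0\!\left(\overline{M}_4^{GIT},nA\right)\;\cong\;\bigoplus_n H^0\!\left(\overline{M}_4,n\cdot \phi^{*}A\right),$$
and taking $\Proj$ together with the numerical identity above gives $\overline{M}_4^{GIT}\cong\overline{M}_4(\tfrac{5}{9})$.

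The main obstacle is the precise verification of the contraction picture: one must confirm that the images of $\delta_1$ and $\delta_2$ under $\phi$ have codimension at least two in $\overline{M}_4^{GIT}$, and that every divisor of $\overline{M}_4^{GIT}$ is the strict transform of a divisor on $\overline{M}_4$ rather than an extraction of $\phi^{-1}$. The explicit classification of orbits in Theorem \ref{thmcubic} and Remark \ref{remcubic}, together with the singularity dictionary of Proposition \ref{comparesing}, provide all the geometric input needed; the remaining task is to match each boundary stratum of $\overline{M}_4$ with its image stratum in $\overline{M}_4^{GIT}$ and confirm the expected dimension drop on the exceptional divisors.
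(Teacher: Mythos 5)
Your overall strategy matches the paper's: express $K_{\overline{M}_4}+\tfrac{5}{9}\delta$ as a multiple of $9\lambda-\delta$, show $\phi:\overline{M}_4\dashrightarrow\overline{M}_4^{GIT}$ is a birational contraction, and compare $\Proj$ of section rings. But there is a genuine gap in the key numerical step. You assert that the pullback $\phi^*A$ to $\overline{M}_4$ is a positive rational multiple of $9\lambda-\delta$, quoting the Hassett--Hyeon identification from the proof of Theorem \ref{chowcubic}. That identification, however, lives on $\mathbb{P}E$: the pullback of $A$ to $\mathbb{P}E$ is $3\eta+2h$, which equals $9\lambda-\delta$ there only because the rational map $\mathbb{P}E^\circ\to\overline{M}_4$ misses $\delta_1$ and $\delta_2$ entirely (all one-nodal $(2,3)$-complete intersections lie in $\delta_0$), so $\delta|_{\mathbb{P}E}=\delta_0|_{\mathbb{P}E}$. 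It does not determine the $\delta_1$ and $\delta_2$ coefficients of $\phi^*A$ on $\overline{M}_4$, and in fact those coefficients are $-3$, not $-1$: the test curve $Z\subset\delta_1$ obtained by attaching a pencil of plane cubics to a genus-$3$ pointed curve is contracted by $\phi$, so $\phi^*\mathcal{L}\cdot Z=0$, and with $Z\cdot\lambda=1$, $Z\cdot\delta_0=12$, $Z\cdot\delta_1=-1$ this forces $b_1=3$ (and a similar gluing computation on $\overline{M}_{2,1}$ gives $b_2=3$). So $\phi^*\mathcal{L}\sim 9\lambda-\delta_0-3\delta_1-3\delta_2$, which is \emph{not} proportional to $9\lambda-\delta$ as a divisor class.

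The conclusion can still be rescued, but only by supplying the step you skipped: since $\delta_1$ and $\delta_2$ are $\phi$-exceptional, adding an effective combination of them to $\phi^*\mathcal{L}$ does not change $H^0$, i.e.
$$H^0(\overline{M}_4,\,n(9\lambda-\delta_0-3\delta_1-3\delta_2))\cong H^0(\overline{M}_4,\,n(9\lambda-\delta_0-3\delta_1-3\delta_2+2\delta_1+2\delta_2))=H^0(\overline{M}_4,\,n(9\lambda-\delta)),$$
and only then does $\Proj$ of the section ring identify $\overline{M}_4^{GIT}$ with $\overline{M}_4(5/9)$. So the argument requires both the explicit test-curve computation of $b_1,b_2$ (which you omit) and the subsequent use of exceptionality to replace $-3\delta_1-3\delta_2$ by $-\delta_1-\delta_2$ (which you also omit). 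Without those, the claimed ``numerical identity'' $\phi^*A\sim 9\lambda-\delta$ is simply false, and the proof does not close.
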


\begin{proof}
We first note that there is a birational contraction
$$ \varphi: \overline{M}_4 \dashrightarrow \overline{M}_4^{GIT} .$$
To see that this is indeed a contraction,
recall from \S \ref{sectcubicg4} that $\varphi^{-1}$ is regular outside of a codimension two locus.
Indeed, the set $\Sigma$ of singular curves in $\overline{M}_4^{GIT}$ is an irreducible divisor.  Since the general point of $\Sigma$ corresponds to a Deligne-Mumford stable curve, we see that indeed the map $\varphi^{-1}$ is regular outside of codimension 2.  We also note here that being a birational map from a $\mathbb Q$-factorial space, we can conclude that $\phi$ extends over the generic points of each boundary divisor.

Now, let $\mathcal{L}$ denote the ample line bundle on $\overline{M}_4^{GIT}$ corresponding to the linearization $\mathcal{O}(3,2)$.  We wish to determine the numerical class of $\varphi^* \mathcal{L}$.  Write
$$ \varphi^* \mathcal{L} = a \lambda - b_0 \delta_0 - b_1 \delta_1 - b_2 \delta_2 . $$
By Proposition \ref{computeclasses}, we know that $a = 9$, $b_0 = 1$.  To compute the coefficient $b_1$, consider the curve $Z \subset \overline{M}_4$ obtained by gluing a fixed element of $M_{3,1}$ to a standard pencil of elliptic curves.  Since cuspidal curves are stable in $\overline{M}_4^{GIT}$,  the arguments in \S \ref{sectcubicg4} show that the map $\varphi$ is regular and constant on $Z$, hence $\varphi^* \mathcal{L} \cdot Z = 0$.  It is well known that $Z.\lambda=1$, $Z.\delta_0=12$, $Z.\delta_1=-1$, and $Z.\delta_2=0$.  It follows that
$$ a - 12b_0 + b_1 = 0 ,$$
so $b_1 = 3$.

To determine the coefficient $b_2$, consider the gluing map
$$ g: \overline{M}_{2,1} \to \overline{M}_4 $$
given by gluing a fixed general, genus two, marked curve  at the respective marked points.  We note that if $(C,p) \in \overline{M}_{2,1}$ is integral and $p$ is not a Weierstrass point of $C$, then associated to  $g(C)$ is a GIT semi-stable $(2,3)$-curve in $\mathbb P^3$ that is in the orbit of the curve $C_{2A_5}$ (see Example \ref{exaC2A5}; recall the curve $C_{2A_5}$  consists of three components: two lines and one component of degree 4, meeting the other two in $A_5$ singularities).  Doing this for one parameter families, as in Example \ref{exaC2A5}, it follows that this describes the  extension of $\varphi$ over the generic point of  $\delta_2$.

 In particular, the map $\varphi \circ g$ is regular and constant along the complement of $\delta_1 \cup W^1_2$, where $W^1_2 \subset \overline{M}_{2,1}$ is the Weierstrass divisor.  It follows that $g^* \varphi^* \mathcal{L}$ is supported along the union of these two divisors, and hence on $\overline M_{2,1}$,
$$ g^* \varphi^* \mathcal{L} = b_2 \omega + 9 \lambda - \delta_0 - 3 \delta_1 = b_2 \omega - \lambda - 2 \delta_1 \sim 3 \omega - \lambda \pmod{\delta_1}.$$
We conclude that $b_2 = 3$.  Note that in the computation above we are using the so-called genus $2$ $\lambda$-formula, $10\lambda=\delta_0+2\delta_1$, and properties of pull-backs of divisor classes (see e.g. Morrison \cite[Formula 1.52, p.35 and Lemma 1.26, p. 18]{morrison}).

Finally, note that, since $\delta_1$ and $\delta_2$ are $\varphi$-exceptional, we have
\begin{eqnarray*}
 H^0 ( \overline{M}_4 , \varphi^* \mathcal{L}^{\otimes n})& =& H^0 ( \overline{M}_4 , n(9 \lambda - \delta_0 - 3 \delta_1 - 3 \delta_2 ))\\
 &\cong& H^0 ( \overline{M}_4 , n((9 \lambda - \delta_0 - 3 \delta_1 - 3 \delta_2) + 2( \delta_1 + \delta_2 )))\\
 & =& H^0 ( \overline{M}_4 , n(9 \lambda - \delta_0 - \delta_1 - \delta_2)) .
 \end{eqnarray*}
Thus, $\varphi$ being a birational contraction,
\begin{eqnarray*}
 \overline{M}_4^{GIT} &=& \Proj \bigoplus_n H^0 ( \overline{M}_4 , \varphi^* \mathcal{L}^{\otimes n}) \\
& \cong& \operatorname{Proj} \bigoplus_n H^0 \left( \overline{M}_4 , \left(K_{\overline{M}_4} + \frac{5}{9} \delta \right)^{\otimes n}\right) \\
&=& \overline{M}_4 \left( 5/9 \right).
\end{eqnarray*}
\end{proof}

We note that all of the singularities appearing in Theorem \ref{thmcubic} (i.e. $A_1,\dots,A_4$ are stable; $A_5$ and $D_4$ as boundary cases)  are as predicted in \cite{AFS}.  Indeed, it is expected that curves with $A_n$ singularities should appear in $\overline{M}_4 \left( \frac{5}{9} \right)$ for all $n \leq 4$, with $A_2$ singularities replacing elliptic tails, $A_3$ singularities replacing elliptic bridges, and $A_4$ singularities replacing Weierstrass 2-tails.  A local description of a natural resolution of the rational map $\overline{M}_4^{GIT}\dashrightarrow \overline{M}_4$ along the $A_2,\dots,A_4$ loci is given in \cite{ade} (see esp. \S4.2). In addition, there is a unique   closed orbit of a strictly semi-stable curve lying on a smooth quadric, namely the orbit of $C_{2A_5}$.
As noted above, this curve is the image of the generic point of $\delta_2$. Similarly, curves with $D_4$ singularities are also predicted to appear in the Hassett--Keel program at precisely the critical value $\frac{5}{9}$ (cf. \cite{AFS}), as a replacement for curves with an elliptic component meeting the rest of the curve in 3 points.  Such curves are referred to as elliptic triboroughs in \cite{AFS}. The closed orbit corresponding to this case is that of the curve $C_D$ from   Theorem \ref{thmcubic}.

Finally, the only non-reduced and strictly semistable curve is $C_{A,B}$ for $4A/B^2=1$ (see Rem. \ref{remcubic}); it comes from projection from the chordal cubic. This curve  is an example of a ribbon, i.e. a double structure on a rational normal curve.  The standard reference on ribbons is \cite{BE}, where they are introduced as the canonical limit of a family of curves degenerating to a hyperelliptic curve.  They are expected to appear in a flip of the hyperelliptic locus of $\overline{M}_g$.   The existence and construction of this flip is currently an open problem (with genus $4$ as the first instance) in the Hassett--Keel program. We expect that a geometric consequence of the comparison of $\overline{M}_4^{GIT}$ to the ball quotient model (Thm. \ref{mainthm}), will be a construction of the hyperelliptic flip in the genus $4$ case. This will be discussed elsewhere.

\section{Ball quotient model for  the moduli of genus $4$ curves}\label{sectball}
A ball quotient model for $\overline{M}_4$ was constructed by Kondo in \cite{k2}. We briefly review the construction below.  We then establish some  facts about the discriminant hyperplane arrangement and the Baily--Borel compactification. We conclude with a result about the polarization of the ball quotient (Thm. \ref{thmballpol}).

\subsection{Kondo's construction}\label{sectconst}
A smooth non-hyperelliptic genus $4$ curve is contained in a unique quadric surface $Q$. The cyclic triple cover of $Q$ branched along $C$ is a $K3$ surface $S$. Conversely, $S$ together with the covering automorphism recovers $C$. It is well known that, via the period map, the moduli space of $K3$ surfaces is a locally symmetric variety; it is the quotient of a Type $IV$ bounded symmetric domain by the monodromy group. Taking into account the covering automorphism (see  \cite{dk} for a discussion of the general theory of ``eigenperiods''), one obtains that the moduli space of genus $4$ curves is birational to a $9$-dimensional ball quotient $\calB_9/\Gamma$.  More precisely, Kondo \cite{k2} proved the following:

\begin{theorem}[{Kondo}]\label{thmkondo}
The construction described above induces an isomorphism
\begin{equation}\label{eqkondo}
\Phi_0:M_4^{ns}\tilde\to \left(\calB_9\setminus(\calH_{v}\cup \calH_{n}\cup \calH_{h})\right)/\Gamma
\end{equation}
between the moduli of non-special, genus $4$ curves and the quotient of the  complement of a hyperplane arrangement in a $9$-dimensional complex ball.
Moreover, $\Phi_0$ extends along the vanishing theta locus $V$ with image in $\calH_{v}/\Gamma$, and at the generic point of $\Delta_0$ with image in $\calH_n/\Gamma$. The hyperelliptic Heegner divisor $\calH_h/\Gamma$ parameterizes pairs $(C,\sigma)$ with $C$ a hyperelliptic genus $4$ curve and $\sigma\in g^1_2$.
\end{theorem}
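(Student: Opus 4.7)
The plan is to follow Kondo's construction (a specific ``eigenperiod'' case in the sense of \cite{dk}): build a K3 surface with an order-$3$ automorphism from $C$, identify its periods with a ball quotient, and use global Torelli. Given a class $[C] \in M_4^{ns}$, embed $C$ canonically in $\bP^3$; the unique quadric $Q$ containing $C$ is smooth (since $C$ has no vanishing theta null), so $Q \cong \bP^1 \times \bP^1$ and $C$ is a $(3,3)$-divisor. Let $\pi\colon S \to Q$ be the cyclic triple cover branched along $C$. Adjunction gives $K_S = \pi^*(K_Q + \tfrac{2}{3}C) = 0$, and the direct image decomposition $\pi_*\calO_S = \calO_Q \oplus \calO_Q(-1,-1) \oplus \calO_Q(-2,-2)$ gives $h^1(\calO_S) = 0$, so $S$ is a K3 surface; the Galois action supplies a canonical order-$3$ automorphism $\sigma$ of $S$ whose fixed locus is the preimage of $C$.

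Next, analyze the action of $\sigma^*$ on $H^2(S, \bZ)$. The invariant sublattice contains $\pi^* H^2(Q,\bZ)$ of rank $2$; the anti-invariant orthogonal complement $T_0$ is a $\bZ[\zeta_3]$-module of rank $10$ over $\bZ$ that carries the non-trivial Hodge structure. Since $\sigma^*$ preserves $H^{2,0}(S)$ and acts by a primitive cube root of unity, one of the two complex eigenspaces (say $\zeta_3$) contains $H^{2,0}(S)$; the restriction of the intersection form to this eigenspace is a Hermitian form of signature $(1,9)$, cutting out the complex $9$-ball $\calB_9$. Letting $\Gamma$ denote the group of $\sigma^*$-equivariant Hodge isometries of $H^2(S, \bZ)$, one obtains a period map $\Phi_0 \colon M_4^{ns} \to \calB_9/\Gamma$.

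Injectivity of $\Phi_0$ follows from global Torelli for K3 surfaces: if $\Phi_0(C_1) = \Phi_0(C_2)$, then after adjusting by elements of $\Gamma$ one produces a Hodge isometry $H^2(S_1,\bZ) \to H^2(S_2,\bZ)$ intertwining the $\sigma$-actions, which lifts to an isomorphism $S_1 \to S_2$ descending to $Q_1 \to Q_2$ and carrying $C_1$ to $C_2$. Surjectivity onto the complement of the three Heegner divisors uses the surjectivity of the K3 period map together with Nikulin's equivariant lifting criterion: a point of $\calB_9$ off these divisors produces a marked K3 on which $\sigma$ lifts, and the quotient is recognized as a smooth quadric branched along a non-special, non-hyperelliptic genus $4$ curve. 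The three exceptional loci are identified via the algebraic classes that appear in $H^2(S,\bZ)$ under specializations of $(C,Q)$: $\calH_v$ corresponds to $Q$ degenerating to a quadric cone (an extra $\sigma$-invariant $(-2)$-class appears in the invariant lattice); $\calH_n$ corresponds to $C$ acquiring a node (an extra $\sigma$-orbit of exceptional curves appears in the anti-invariant part); and $\calH_h$ corresponds to the degeneration in which $Q$ becomes a union of two planes and the cover construction is naturally indexed by a hyperelliptic genus $4$ curve together with the choice of a pencil $\sigma \in g^1_2$. The extensions of $\Phi_0$ across $V$ and across the generic point of $\Delta_0$ with images in $\calH_v/\Gamma$ and $\calH_n/\Gamma$ then follow by applying the same K3 construction to the limiting $(C,Q)$.

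The main obstacle is the lattice-theoretic bookkeeping. One must identify $(T_0, \sigma^*)$ explicitly as a $\bZ[\zeta_3]$-lattice of signature $(1,9)$, prove it is unique in its genus via Nikulin's discriminant form theory, describe $\Gamma$ and in particular show that it contains the reflections across the three Heegner divisors so that these descend to irreducible divisors in $\calB_9/\Gamma$, and check Nikulin's lifting criterion on their complement. Outside this lattice analysis the argument is geometric and largely formal, but inside it constitutes the technical core of Kondo's proof.
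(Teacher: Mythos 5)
The paper does not reprove this theorem; its ``proof'' consists entirely of citations to Kondo's paper \cite{k2}. Your proposal, by contrast, sketches Kondo's actual argument, and most of it is on the right track: the triple cover construction, the eigenperiod identification of a $9$-ball, Torelli for injectivity, and surjectivity of the $K3$ period map plus lattice theory for the image. Two corrections are in order, one small and one substantive.

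The small one: you write that the anti-invariant lattice $T_0$ has ``rank $10$ over $\bZ$.'' It has rank $10$ over $\bZ[\zeta_3]$, hence rank $20$ over $\bZ$; indeed the paper identifies it (as a $\bZ$-lattice) with $T = E_8^{\oplus 2}\oplus U\oplus U(3)$ of signature $(2,18)$, and the invariant lattice has rank $2$ (consistent with a holomorphic Lefschetz count for a fixed curve of genus $4$).

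The substantive one: your identification of $\calH_h$ is wrong. You claim it corresponds to the degeneration ``in which $Q$ becomes a union of two planes.'' That degeneration is the $D_4$ case (Proposition \ref{comparesing}(2)(d)); in the GIT picture it gives the orbit of $C_D$, and under the period map that orbit goes to the \emph{cusp} $c_{E_6^{\oplus 2}\oplus A_2^{\oplus 2}}$ of the Baily--Borel compactification (Theorem \ref{propbb} and the end of \S\ref{sectcompare}), not to a Heegner divisor in the interior. The divisor $\calH_h$ is instead the locus of unigonal $K3$ surfaces: these do not arise from the triple cover of a quadric at all, and the period map of the non-hyperelliptic (indeed non-special) locus misses $\calH_h$ entirely --- this is precisely what the theorem asserts, and what the proof of Theorem \ref{mainthm} invokes. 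The interpretation of $\calH_h/\Gamma$ as pairs $(C,\sigma)$ with $C$ hyperelliptic and $\sigma\in g^1_2$ is established by Kondo via a separate construction adapted to hyperelliptic curves (whose canonical image is a twisted cubic, not a $(2,3)$-complete intersection), and is not a limit of the $(C,Q)$ triple covers you use elsewhere in the argument. As written, your explanation of $\calH_h$ would point to the wrong boundary stratum and would be inconsistent with the rest of the paper.
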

\begin{proof}
The isomorphism $M_4^{nh}\tilde\to \left(\calB_9\setminus(\calH_{n}\cup \calH_{h})\right)/\Gamma$ is \cite[Thm. 1]{k2}; the behavior along the vanishing theta locus $V$ is discussed in \cite[Rem. 4]{k2}. The results about the nodal and hyperelliptic locus are  \cite[Thm. 2]{k2}.
\end{proof}

A similar construction involving the period map for cubic fourfolds was used by  Allcock--Carlson--Toledo \cite{act} and Looijenga--Swierstra \cite{ls} to prove that the moduli of cubic $3$-folds is birational to a $10$-dimensional ball quotient.
\begin{theorem}[{\cite{act},\cite{ls}}]\label{thmact}
 Let $M_{\operatorname{cubic}}^{sm}$ be the moduli space of smooth cubic threefolds. Then
$$M_{\operatorname{cubic}}^{sm}\cong (\calB_{10}\setminus (\calH_{0}\cup \calH_{\infty}))/\Gamma'.$$
The hyperplane arrangements $\calH_0$ and $\calH_{\infty}$ correspond to the singular cubics, and  degenerations to the chordal cubic.
\end{theorem}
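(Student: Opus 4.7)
The natural approach, parallel to Kondo's construction in Theorem~\ref{thmkondo}, is to associate to each smooth cubic threefold $X \subset \bP^4$ a smooth cubic fourfold carrying an order-$3$ automorphism, and then apply the eigenperiod formalism of \cite{dk} to the period map for cubic fourfolds. Explicitly, given $X = V(F)$, I would form the cyclic triple cover $Y \subset \bP^5$ defined by $y_5^3 + F(y_0,\dots,y_4) = 0$; this is a smooth cubic fourfold equipped with the deck transformation $\sigma : y_5 \mapsto \zeta_3 y_5$, and $X$ is recovered as the branch locus of $Y \to \bP^4$. Conversely, any cubic fourfold with such a $\bZ/3\bZ$-action arises in this way, so the moduli problem for cubic threefolds is equivalent to a $\sigma$-equivariant moduli problem on the sublocus of cubic fourfolds admitting a $\sigma$-action.

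Next I would pass to the period map. The primitive cohomology $H^4_{\textnormal{prim}}(Y,\bZ)$ carries a polarized Hodge structure of $K3$-type with Type $IV$ period domain $\calD$ of dimension $20$ (Voisin). The induced action $\sigma^*$ on $H^4_{\textnormal{prim}}(Y,\bC)$ splits it into eigenspaces for the nontrivial cube roots of unity, and the Hodge line is forced to lie in the $\zeta_3$-eigenspace; a signature computation shows this eigenspace carries a Hermitian form of signature $(1,10)$. Consequently the eigenperiod map lands in the complex ball $\calB_{10}\subset \calD$, and modding out by the monodromy group $\Gamma'$ of Hodge isometries commuting with $\sigma^*$ gives a well-defined morphism
$$ M_{\textnormal{cubic}}^{sm} \to \calB_{10}/\Gamma', $$
whose source and target both have dimension $10$.

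The key step is promoting this morphism to an open embedding onto $\calB_{10}\setminus(\calH_0\cup\calH_\infty)$ modulo $\Gamma'$. Injectivity follows from Voisin's global Torelli theorem for cubic fourfolds combined with the verification that a Hodge isometry commuting with $\sigma^*$ is induced by an automorphism of $Y$ commuting with $\sigma$, and therefore descends to an automorphism of $X$. Surjectivity onto the complement of the discriminant arrangement, together with identification of $\Gamma'$ as the full arithmetic stabilizer, can be obtained via an arithmeticity/extension argument in the Baily--Borel framework. Finally I would identify the two Heegner divisors by a limit mixed Hodge structure calculation: acquiring a node on $X$ produces a vanishing cycle in $H^4(Y)$ whose corresponding $\sigma$-invariant Hodge locus cuts out the discriminant divisor $\calH_0$, while specialization to the chordal cubic (whose triple cover exhibits exceptional degeneration, the branch cubic being a secant variety of a rational normal curve) produces the boundary divisor $\calH_\infty$.

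The hardest step will be the $\sigma$-equivariant strong Torelli statement, specifically showing that Hodge isometries compatible with $\sigma^*$ come from geometric automorphisms commuting with $\sigma$ and descend to automorphisms of $X$; Voisin's theorem handles the non-equivariant version but the refinement requires careful bookkeeping of the integral structure on $H^4(Y,\bZ)$ as an eigenlattice for $\sigma^*$. A secondary but essential difficulty is verifying that the natural monodromy group $\Gamma'$ really is the full arithmetic unitary group of the Hermitian eigenlattice acting on $\calB_{10}$, and matching the two explicit boundary degenerations (node versus chordal cubic) with the two hyperplane arrangements; both are arithmetic computations that must be performed after fixing an explicit integral model of the relevant eigenlattice.
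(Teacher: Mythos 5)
The paper does not prove Theorem~\ref{thmact}; it quotes the result from \cite{act} and \cite{ls}. Your sketch correctly reconstructs the strategy of those references --- cyclic triple cover to a cubic fourfold with a $\mu_3$-action, eigenperiod map to a $10$-dimensional ball via the signature-$(1,10)$ eigenlattice, Voisin's global Torelli theorem for injectivity, surjectivity and identification of the arithmetic group, and a limit mixed Hodge structure analysis matching the discriminant and chordal degenerations to $\calH_0$ and $\calH_\infty$ --- so your approach is essentially the same as that of the cited works.
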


As explained in section \ref{sectprelim}, the moduli space of genus $4$ curves is closely related to the discriminant divisor in the moduli of cubics. In the context of ball quotient models, the relationship can be made very precise.

\begin{proposition}\label{compareballqt}
Kondo's ball quotient model for $\overline{M}_4$ is compatible with the ball quotient model for the moduli of cubics,  in the sense that there exists a natural map
$$\calB_9/\Gamma\to \calH_0/\Gamma'\subset \calB_{10}/\Gamma',$$
which is a normalization morphism onto the image.
\end{proposition}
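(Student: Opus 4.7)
The plan is to use the geometric correspondence from Section \ref{sectprelim}: projection from a node $p$ of a cubic threefold $X$ produces a $(2,3)$-complete intersection curve $C \subset \bP^3$, and conversely a choice of data $(q,f)$ presenting $C$ recovers $(X,p)$. Since both ball quotient models arise from triple-cover constructions---Kondo's $K3$ surface $S$ is the triple cover of the quadric $Q \supset C$ branched along $C$, and the Allcock--Carlson--Toledo / Looijenga--Swierstra cubic fourfold $Y_X$ is the triple cover of $\bP^4$ branched along $X$---the comparison must be effected at the level of the Hodge theory of these covers.

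I would construct the map as follows. For a one-parameter smoothing of a nodal cubic threefold $X$, the limit Hodge structure on the nearby cubic fourfolds carries a vanishing class $\delta$ attached to the node; restricting to the eigenspace for the covering automorphism gives a Hermitian Eisenstein lattice of signature $(10,1)$ together with a distinguished root $\delta$, and $\delta^\perp$ should be Hodge-theoretically identified with the eigenspace lattice of Kondo's $K3$ surface $S$ associated to $C$. Thus the hyperplane $\calH_0 \subset \calB_{10}$ cut out by $\delta$ is identified with $\calB_9$, producing a holomorphic embedding $\calB_9 \hookrightarrow \calH_0 \subset \calB_{10}$. To descend to the arithmetic quotients, I would verify that Kondo's monodromy group $\Gamma$ maps into the stabilizer of $\delta$ inside $\Gamma'$ and exhausts it up to finite index: any arithmetic isometry of Kondo's Eisenstein lattice extends by the identity on $\bZ\delta$ to one of the ACT/LS lattice, and conversely any element of $\Gamma'$ fixing $\delta$ restricts to an isometry of $\delta^\perp$. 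The resulting morphism $\calB_9/\Gamma \to \calH_0/\Gamma'$ is then finite and birational onto its image, since both sides parameterize the same moduli---genus $4$ curves, respectively cubic threefolds with a marked node.

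Since $\calB_9/\Gamma$ is normal (a quotient of a smooth complex manifold by a properly discontinuous arithmetic group) and the map is finite and birational onto its image, it factors through and identifies with the normalization of the image: failure of normality of $\calH_0/\Gamma'$ inside $\calB_{10}/\Gamma'$ can arise where distinct $\Gamma'$-translates of $\calH_0$ meet, and the normalization separates these sheets. The main obstacle is the explicit lattice identification in the second step: Kondo's Eisenstein lattice and the ACT/LS Eisenstein lattice arise from different geometric sources and must be matched on the nose, not merely after extension of scalars. This comes down to an explicit computation identifying $\delta^\perp$ inside the ACT/LS lattice with Kondo's lattice, together with a matching of the corresponding monodromy groups---delicate, but tractable arithmetic bookkeeping of the sort standard in this circle of ideas.
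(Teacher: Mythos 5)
Your proposal follows essentially the same route as the paper. Both reduce to a totally geodesic embedding $\calB_9\hookrightarrow\calB_{10}$ coming from the lattice data, descend to the arithmetic quotients, and deduce that the resulting morphism is a normalization onto its image from finiteness (arithmeticity) together with birationality and normality of the source. The paper is briefer and more abstract: it records the general fact that a $\bQ$-defined embedding $\SU(1,n)\subset\SU(1,n+1)$ with $\Gamma:=\Gamma'\cap G$ produces a finite morphism $\calB_n/\Gamma\to\calB_{n+1}/\Gamma'$ birational onto $\calH/\Gamma'$, and defers the lattice and monodromy identification entirely to \cite[Ch.~5, esp.~Thm.~5.1]{act}; your Hodge-theoretic construction via the vanishing class $\delta$ and $\delta^\perp$ is exactly the geometric content behind that citation. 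One small imprecision: birationality onto $\calH_0/\Gamma'$ requires that $\Gamma$ \emph{equal} the image in $\operatorname{Aut}(\calB_9)$ of the stabilizer of $\delta$ in $\Gamma'$, not merely sit inside it with finite index, so ``exhausts it up to finite index'' is too weak on its own---but your parallel moduli-theoretic argument for birationality (both quotients are generically parameterizing the same data) does close this.
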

\begin{proof}
Generally speaking, a group embedding $G=\SU(1,n)\subset G'=\SU(1,n+1)$ defines a totally geodesic embedding $\calB_n\subset \calB_{n+1}$. Assuming $\Gamma'\subset G'$ is an arithmetic subgroup and $G\subset G'$ is defined over $\bQ$, we can define $\Gamma=\Gamma'\cap G$ and $\calH$ to be the hyperplane arrangement obtained by considering all $\Gamma'$-translates of $\calB_n$ in $\calB_{n+1}$. Clearly, there is a morphism $\calB_n/\Gamma\to \calB_{n+1}/\Gamma'$ which is birational onto the image $\calH/\Gamma'$. The arithmeticity assumption assures that the morphism is finite; thus a normalization onto the image. Given these general facts, the result follows from the discussion of \cite[Ch. 5]{act} (esp. \cite[Thm. 5.1]{act}).
\end{proof}

We note that the nodal divisor $\calH_0/\Gamma'$ in the moduli of cubics $\calB_{10}/\Gamma'$ decomposes into two irreducible divisors $\calH_n/\Gamma$ and $\calH_v/\Gamma$ when restricted to the moduli of genus four  curves $\calB_9/\Gamma$. The geometric meaning is clear by Prop. \ref{comparesing}: the self-intersection of the discriminant divisor for cubics corresponds to two $A_1$ singularities or to an $A_2$ singularity, giving  the two cases. On the other hand, the chordal divisor $\calH_\infty/\Gamma'$
restricts to the hyperelliptic divisor $\calH_h/\Gamma$. Geometrically, $\calH_\infty/\Gamma'$ corresponds to hyperelliptic genus $5$ curves (see \cite[Ch. 4]{act}), and $\calH_h/\Gamma$ to their degenerations.

\subsection{The arithmetic of the hyperplane arrangement} In this section we discuss some basic facts about the hyperplane arrangements $\calH_n,\calH_v$, and $\calH_h$. These results are arithmetic in nature, and are standard applications of lattice theory (see esp. Nikulin \cite{nikulin} and Allcock \cite{allcockaut}). For some relevant background to our situation, we refer the reader to \cite{act}, \cite{k2}, \cite{dk}, and \cite{scattone}.

\subsubsection{Preliminaries} To start, we recall that an Eisenstein lattice $L^\calE$ is a free module over the ring of Eisenstein integers $\calE=\bZ\left[\frac{-1+i\sqrt{3}}{2}\right]$ together with a Hermitian form. The arithmetic groups $\Gamma, \Gamma'$ of Theorems \ref{thmkondo} and \ref{thmact}, as well as the corresponding hyperplane arrangements are described in terms of certain Eisenstein lattices of hyperbolic signatures (e.g. \cite[\S7]{act}). Here we prefer a slightly indirect, but more familiar description, using standard lattices. Namely, an Eisenstein lattice $L^\calE$ is equivalent to a standard lattice $L$ (i.e. $\bZ$-module with symmetric bilinear form) together with a fixed-point-free isometry $\rho$ of order $3$. Simply put, $L$ is the  $\bZ$-module underlying $L^\calE$ together with the real part (suitably scaled) of the hermitian form. The isometry $\rho$ corresponds to the multiplication by a root of unity. The rank of $L$ and the signature are double
 the rank and signature of $L^{\calE}$. Thus, if $L^\calE$ is hyperbolic of signature $(1,n)$, then the associated $\bZ$-lattice $L$ is of signature $(2,2n)$. At the level of symmetric domains,  the $n$-dimensional complex ball $\calB_n$ (associated to $L^\calE$) has a totally geodesic embedding into the Type $IV$ domain $\calD_{2n}$ (associated to $L$) of dimension $2n$. Furthermore, it is standard to recover the subdomain $\calB_n\subset \calD_{2n}$ using the isometry $\rho$ (see \cite{dk}). The monodromy group $\Gamma$ is described in terms of the subgroup $\widetilde \Gamma$ of elements commuting with $\rho$ in the corresponding orthogonal group (see \cite[p. 387]{k2}).

In our situation, the (integral) lattices associated to the cases of genus $4$ curves and cubic threefolds are $T=E_8^{\oplus2}\oplus U\oplus U(3)$ and $T'=E_8^{\oplus2}\oplus A_2\oplus U\oplus U$ respectively. In fact, $T$ and $T'$ are the transcendental lattices of the $K3$ surface or cubic fourfolds occurring in the constructions of \cite{k2} and \cite{act} respectively. The covering automorphism involved in the construction (see the first paragraph of \S\ref{sectconst}) induces a fixed point free isometry $\rho$ (and $\rho'$) for these lattices. As explained in the previous paragraph, this is equivalent to an Eisenstein lattice structure $T^\calE$ on $T$ (and similarly for $T'$). A useful trick for understanding the arithmetic aspects of our examples is the following standard result :
\begin{lemma}\label{lemma1}
The lattice $T=E_8^{\oplus2}\oplus U\oplus U(3)$ (resp. $T'$) has a primitive embedding into the unimodular lattice $\Lambda=E_8^{\oplus3}\oplus U\oplus U$ with orthogonal complement $R=E_6\oplus A_2$ (resp. $R'=E_6$). Furthermore, the isometry $\rho$ of $T$ extends to a fixed-point-free isometry of $\Lambda$ (and similarly for $T'$). \qed
\end{lemma}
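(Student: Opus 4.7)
The proof is a standard application of Nikulin's theorem on primitive embeddings of even lattices into unimodular lattices \cite{nikulin}, together with an equivariance check for the extension of $\rho$. Since the argument for $T'$ is completely parallel (with $E_6$ replacing $E_6\oplus A_2$), I treat only $T$.

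For the existence of the primitive embedding, the signatures match: $T$ has signature $(2,18)$, $R:=E_6\oplus A_2$ is negative definite of rank $8$, and their sum is the signature $(2,26)$ of $\Lambda$. Compute discriminant forms. Since $E_8^{\oplus 2}\oplus U$ is unimodular, $q_T=q_{U(3)}$, which is the hyperbolic quadratic form on $(\bZ/3)^{\oplus 2}$. On the other side, $q_R=q_{E_6}\oplus q_{A_2}$ lives on $(\bZ/3)^{\oplus 2}$, with standard generators of norms $-4/3$ and $-2/3\pmod{2\bZ}$; the vector $\omega_{A_2}+2\omega_{E_6}$ is isotropic, so $-q_R$ is hyperbolic and $q_T\cong -q_R$. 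Nikulin's criterion then yields a primitive embedding $T\hookrightarrow\Lambda$ with orthogonal complement in the genus of $R$, and since $E_6\oplus A_2$ is unique in its genus the complement is isomorphic to $R$.

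For the extension of $\rho$, equip $R$ with the fixed-point-free order-$3$ isometry $\rho_R$ given by the hexagonal rotation on $A_2$ together with the fourth power of a Coxeter element of $E_6$ (which has eigenvalues $\omega,\bar\omega$ each with multiplicity three on the reflection representation, and is therefore fixed-point-free). Since both components lie in the respective Weyl groups, $\rho_R$ acts trivially on the discriminant group $q_R$. The direct sum $\rho\oplus\rho_R$ extends to an isometry of $\Lambda$ iff it preserves the gluing subgroup $\Lambda/(T\oplus R)\subset q_T\oplus q_R$, which is the graph of an isomorphism $\phi\colon q_T\xrightarrow{\sim}-q_R$; by triviality of $\rho_R$ on $q_R$, this reduces to the statement that $\rho$ acts trivially on $q_T$. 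The identity $1+\rho+\rho^2=0$ (valid on $T$ because $\rho$ is order $3$ and fixed-point-free) forces the induced action on $q_T$ to be unipotent, and a direct inspection of the Hermitian $\calE$-module structure on $T$ coming from Kondo's triple-cover construction rules out the nontrivial unipotent possibility. The resulting $\tilde\rho\in O(\Lambda)$ is fixed-point-free on $\Lambda_{\mathbb{R}}=T_{\mathbb{R}}\oplus R_{\mathbb{R}}$ since it is so on each summand.

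The most delicate point is the triviality of $\rho$ on $q_T$; an alternative (and perhaps cleaner) route is to first equip $\Lambda=E_8^{\oplus 3}\oplus U^{\oplus 2}$ with its well-known Hermitian $\calE$-structure (analogous to the complex Leech lattice) and then construct the primitive embedding $T\hookrightarrow\Lambda$ to be $\calE$-linear, so that both the orthogonal complement $R$ and the extended action $\tilde\rho$ arise automatically.
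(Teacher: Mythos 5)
The paper states this lemma with an immediate \qed{} and calls it a ``standard result,'' so there is no internal proof to compare against; I evaluate your argument on its own terms.

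Your route via Nikulin's embedding criterion is the right one, and the bookkeeping is correct: $q_T\cong q_{U(3)}$ is the hyperbolic form on $(\bZ/3)^{\oplus 2}$, your isotropic vector shows $-q_{E_6\oplus A_2}$ is hyperbolic as well, the signature condition $(0,8)$ is met, and uniqueness of $E_6\oplus A_2$ in its genus pins down the orthogonal complement. The choice of a Weyl-group $\rho_R$ (rotation on $A_2$, $(\text{Coxeter})^4$ on $E_6$) and the final fixed-point-free check on $\Lambda$ are also fine, as is the parallel $T'$ computation.

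The genuine gap is exactly where you flag it: triviality of $\bar\rho$ on $q_T$. As you observe, any order-$3$ fixed-point-free isometry of $E_6\oplus A_2$ preserves the two irreducible summands and is therefore Weyl, so $\bar\rho_R$ is forced to be trivial on $q_R$; hence the extension to $\Lambda$ exists \emph{if and only if} $\bar\rho=\mathrm{id}$ on $q_T$. Your observation $(\bar\rho-1)^2=0$ only narrows this to two cases, and the phrase ``a direct inspection of the Hermitian $\calE$-module structure rules out the nontrivial unipotent possibility'' is not an argument (indeed the $\calE$-module structure on $A_T$ only reproduces $(\bar\rho-1)^2=0$, since $A_T$ is a module over $\calE/3\cong\mathbb{F}_3[\epsilon]/\epsilon^2$, which allows both cases). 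The alternative route via a Hermitian structure on $\Lambda$ is likewise only sketched. A concrete way to close the gap for $T$: in Kondo's construction $T$ is the transcendental lattice of a $K3$ surface with nonsymplectic order-$3$ automorphism $\sigma$, $\rho=\sigma^*|_T$, and $T^\perp$ inside the unimodular $K3$ lattice $E_8^{\oplus 2}\oplus U^{\oplus 3}$ is the Picard lattice $\cong U(3)$, on which $\sigma^*$ acts as the identity (it fixes the pullbacks of the two polarization classes from the quadric). Unimodular gluing inside the $K3$ lattice forces the actions on $q_T$ and on $q_{U(3)}$ to agree under the anti-isometry, so $\bar\rho=\mathrm{id}$. (For $T'$ the issue does not even arise: $q_{T'}\cong q_{A_2}$ lives on $\bZ/3$, whose automorphism group has order $2$, so any order-$3$ action is automatically trivial.) With this supplied, the rest of your argument is sound.
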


We also note the following result.
\begin{lemma}\label{lemma2}
Assume that $M$ is a negative definite root lattice such that there exists a fixed-point-free isometry $\rho\in O(M)$ of order $3$. Then, $M$ is a direct sum of $A_2,D_4,E_6,E_8$ summands and $\rho$ preserves these summands.
\end{lemma}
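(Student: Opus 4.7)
The plan is to decompose $M$ into irreducible ADE summands and control $\rho$ via a permutation argument on the factors together with a case-by-case analysis on the summand types.

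First, write $M = \bigoplus_i M_i$ as the canonical decomposition of a root lattice into irreducible summands. Any isometry of $M$ permutes the $M_i$, so $\rho$ induces a permutation of the index set whose cycle lengths divide $3$. If $\rho$ had a $3$-cycle permuting three summands $M_i, M_j, M_k$, then the ``diagonal'' vectors $\{x + \rho(x) + \rho^2(x) : x \in M_i\otimes \bQ\}$ would form a nonzero subspace of $(M_i \oplus M_j \oplus M_k)\otimes \bQ$ pointwise fixed by $\rho$, contradicting the fixed-point-free hypothesis. Hence every $M_i$ is $\rho$-invariant, and $\rho|_{M_i}$ is itself fixed-point-free of order exactly $3$ (if it were trivial, all of $M_i$ would be fixed). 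It thus suffices to show that an irreducible ADE root lattice $L$ admitting a fixed-point-free isometry of order $3$ must be one of $A_2, D_4, E_6, E_8$.

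For this, the eigenvalues of such a $\rho$ on $L\otimes\bC$ are the primitive cube roots of unity $\omega, \bar\omega$, appearing with equal multiplicity since $\rho$ is real. Hence $\rank L$ is even, which eliminates $A_n$ for $n$ odd, $D_n$ for $n$ odd, and $E_7$. For the remaining even-rank candidates: when $L=A_n$ with $n$ even one has $O(A_n)=S_{n+1}\times \{\pm 1\}$, so order-$3$ elements are products of disjoint $3$-cycles in $S_{n+1}$, and an elementary eigenvalue count on $A_n\otimes\mathbb{R}$ produces a nontrivial fixed subspace unless $n=2$; when $L=D_n$ with $n$ even and $n\neq 4$, the outer isometry group has order $2$, so order-$3$ elements lie in $W(D_n)$ and a direct analysis of signed-$3$-cycle elements always produces a fixed vector. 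Existence of a fixed-point-free order-$3$ isometry for $A_2, D_4, E_6, E_8$ is classical: these are precisely the negative-definite root lattices underlying Hermitian lattices over the Eisenstein integers $\mathcal{E}$ ($D_4$ via triality; $E_6, E_8$ via the Hermitian $\mathcal{E}$-structures used implicitly in Lemma~\ref{lemma1} and \cite{act}).

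The main obstacle is the case $L=D_4$, where the outer automorphism group is $S_3$ rather than $\bZ/2$ and the desired isometry lies outside $W(D_4)$; one invokes triality or, equivalently, the presentation of $D_4$ as a rank-$2$ Hermitian $\mathcal{E}$-lattice. The remaining non-existence arguments are elementary eigenvalue counts inside the Weyl group, and existence for $A_2, E_6, E_8$ can be exhibited directly or read off from the Eisenstein lattice structures that will be used elsewhere in the paper.
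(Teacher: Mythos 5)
Your proof is correct, and it follows essentially the same strategy as the paper: decompose $M$ into its irreducible ADE summands, argue that $\rho$ must preserve each summand using the observation that $v + \rho(v) + \rho^2(v)$ is $\rho$-invariant, and then reduce to the classification of fixed-point-free order-$3$ isometries on irreducible root lattices. The one place where your argument diverges slightly is the summand-preservation step: the paper applies the invariance observation to a root $\delta$, deduces $\delta + \rho(\delta) + \rho^2(\delta) = 0$, and then computes $\delta \cdot \rho(\delta) = 1 \neq 0$, which places $\rho(\delta)$ in the same irreducible summand as $\delta$ by orthogonality of distinct summands; you instead argue by contradiction that a nontrivial cycle of summands would produce a nonzero $\rho$-fixed subspace of ``diagonal'' vectors. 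Both arguments work and both hinge on exactly the same invariance fact. Where the paper simply cites \cite{carter} for the irreducible case, you sketch a case-by-case eigenvalue analysis; your sketch is essentially right (even rank forces out $A_{2k+1}$, $D_{2k+1}$, $E_7$; the signed-permutation description of $O(D_n)$ for $n \geq 5$ and the $S_{n+1}\times\{\pm1\}$ description of $O(A_n)$ handle the remaining infinite families, with $D_4$ surviving only via triality), though one should note that the existence of such isometries on $A_2, D_4, E_6, E_8$ — while true — is not actually needed to establish the stated conclusion.
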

\begin{proof}
A negative definite root lattice is a direct sum of $ADE$ summands. We claim that $\rho$ preserves these summands. This statement would follow provided that $\delta.\rho(\delta)\neq 0$ for all roots $\delta$. Note that $\delta+\rho(\delta)+\rho^2(\delta)$ is $\rho$-invariant, and thus it has to be $0$. Then, using $(\rho^2(\delta))^2=-2$, we have $\delta.\rho(\delta)=1$, and the claim follows. Consequently, we are reduced to the case $M$ is an irreducible root lattice. This case is standard, e.g. \cite{carter}.
\end{proof}

\subsubsection{Description of the hyperplane arrangements} In the case of $K3$ surfaces (or cubic fourfolds), the discriminant hyperplanes correspond to the situations where the  lattice of algebraic cycles acquires additional cycles (typically $(-2)$-classes).  The situation is similar in the Eisenstein lattice case (see \cite[\S3]{k2}), except that one has to take into account the isometry $\rho$. Namely, a hyperplane in $\calB_9$ corresponds to a codimension two locus in the associated Type $IV$ domain $\calD_{18}=\calD_T$. These codimension two loci are determined by
sublattices $M\subset T$ of signature $(2,2n-2)$ (here $n=9$). Specifically, we have
$$\calD_M=\{\omega\in \bP(M_\bC)\mid \omega.\omega=0,\omega.\omega>0\}\subset \calD_T=\{\omega\in \bP(T_\bC)\mid\dots\}.$$
If $M$ is invariant with respect to $\rho$, the above inclusion determines an inclusion of complex balls $\calB_{M^\calE}\subset \calB_{T^\calE}$ such that $\calB_{M^\calE}$ is a hyperplane in a suitable embedding of the $n$-dimensional ball $\calB_{T^\calE}$ in $\bP^n$.

Lemma \ref{lemma1} gives an embedding of the $18$-dimensional Type $IV$ domain $\calD_T$ into a $26$-dimensional Type $IV$ domain $\calD_\Lambda$. Thus, by the above discussion,  we can view a hyperplane in $\calB_9$ as given by an embedding of lattices (invariant w.r.t. $\rho$):
\begin{equation}\label{auxeq}
M\subset T\subset \Lambda.
\end{equation}
The embedding $T\subset \Lambda$ is fixed by Lemma \ref{lemma1}. Thus, we can view \eqref{auxeq} as equivalent to the  embeddings
\begin{equation}\label{auxeq2}
T^\perp_\Lambda=R\cong E_6\oplus A_2\subset M^\perp_\Lambda\subset \Lambda\cong E_8^{\oplus 2}\oplus U^{\oplus2}.
\end{equation}
Note that $M_\Lambda^\perp$ is negative definite of rank $10$. Additionally, from Kondo \cite{k2}, one sees that  the lattice $M_\Lambda^\perp$, corresponding to the hyperplane arrangements of Theorem \ref{thmkondo}, contains additional roots. Combining this with Lemma \ref{lemma2}, we conclude:
\begin{proposition}\label{hypdescr}
With notation as above (esp. \eqref{auxeq2}), the hyperplanes in $\calH_v$, $\calH_n$, $\calH_h$ (from Thm. \ref{thmkondo}) correspond to the cases $M_\Lambda^\perp$ being isometric to $D_4\oplus E_6$, $A_2\oplus A_2\oplus E_6$, and $A_2\oplus E_8$ respectively. \qed
\end{proposition}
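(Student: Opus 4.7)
The plan is to combine a rank count, Lemma \ref{lemma2}, and Kondo's geometric description of the Heegner divisors to match each of $\calH_v$, $\calH_n$, $\calH_h$ with the corresponding rank-10 negative definite root lattice. The argument reduces to classifying the rank-10 root lattices with a fixed-point-free order-3 isometry containing $R = E_6 \oplus A_2$ as a sub-root-system, then identifying which case corresponds to which geometric degeneration.

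First I would establish the rank: a Heegner hyperplane has codimension $1$ in $\calB_9$, so the associated $\rho$-invariant sublattice $M \subset T$ has signature $(2,16)$ and rank $18$, forcing $\rank M_\Lambda^\perp = \rank \Lambda - 18 = 10$. From Kondo's description (Theorem \ref{thmkondo}) each Heegner divisor arises from a geometric degeneration in which new algebraic classes enter the Picard lattice of the triple cover $S \to Q$; in each of the three cases these new classes are roots, so $M_\Lambda^\perp$ is generated by roots. Lemma \ref{lemma2} then shows $M_\Lambda^\perp$ decomposes as an orthogonal sum of $A_2, D_4, E_6, E_8$ summands, each $\rho$-stable. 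Enumerating rank-$10$ decompositions and imposing $E_6 \oplus A_2 \hookrightarrow M_\Lambda^\perp$ as a sub-root-system leaves exactly three candidates: $A_2^{\oplus 2} \oplus E_6$, $D_4 \oplus E_6$ (with $A_2 \hookrightarrow D_4$), and $A_2 \oplus E_8$ (with $E_6 \hookrightarrow E_8$); the remaining rank-$10$ decompositions $5A_2$, $3A_2 \oplus D_4$, $A_2 \oplus D_4^{\oplus 2}$ contain no rank-$6$ irreducible summand and so cannot accommodate $E_6$.

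To match each candidate with the correct divisor, I would use Kondo's explicit analysis in \cite{k2}. A node on $C$ lifts to an $A_2$ singularity on $S$ (locally modelled by $w^3 = xy$), whose resolution contributes a new $A_2$ of roots; paired with $R$ this produces $M_\Lambda^\perp = E_6 \oplus A_2 \oplus A_2$ for $\calH_n$. When $C$ lies on a quadric cone the branch locus sits on a singular surface, and the extra classes arising at the cone vertex combine with the $A_2$ summand of $R$ to form a $D_4$, giving $M_\Lambda^\perp = E_6 \oplus D_4$ for $\calH_v$. The remaining type $A_2 \oplus E_8$ must therefore correspond to $\calH_h$; geometrically this reflects the hyperelliptic limit, in which the newly algebraic classes combine with the $E_6$ summand of $R$ into an $E_8$.

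The main obstacle will be the matching step: verifying in each case that the roots produced by the degeneration generate $M_\Lambda^\perp$ itself, rather than a finite-index sublattice, and that the inclusion $R \hookrightarrow M_\Lambda^\perp$ takes the prescribed form. I would handle this either by a local analysis of the triple cover $S \to Q$ at each degeneration, computing the exceptional curves and their intersection form directly, or more efficiently by exploiting the unimodularity of $\Lambda$: the discriminant forms of $M$ and $M_\Lambda^\perp$ must be opposite, which together with the classification above uniquely pins down the isomorphism type and saturated embedding in each of the three cases.
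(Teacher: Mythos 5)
Your proposal is correct and follows essentially the same route as the paper, which records the proposition with a \qed and lets the surrounding discussion (rank count via \eqref{auxeq2}, the remark that $M_\Lambda^\perp$ contains roots by Kondo's description, and Lemma \ref{lemma2}) serve as the argument. You supply the enumeration of rank-$10$ sums of $A_2,D_4,E_6,E_8$ containing $E_6\oplus A_2$ as a sub-root-system and the geometric matching via triple-cover local computations, details the paper outsources to Kondo; you also flag the genuine subtlety the paper glosses over, namely that $M_\Lambda^\perp$ must actually be \emph{generated} by roots for Lemma \ref{lemma2} to apply, which your discriminant-form fallback handles correctly given the unimodularity of $\Lambda$.
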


In particular, using this description of the hyperplane arrangements, we conclude:
\begin{corollary}\label{noint}
The hyperplanes $\calH_h$ do not intersect in the interior of $\calB_9$.
\end{corollary}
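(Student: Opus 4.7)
The plan is to reduce the geometric non-intersection to a lattice-theoretic rigidity statement via Proposition \ref{hypdescr} and the structure of $\Lambda = E_8^{\oplus 3}\oplus U^{\oplus 2}$. Two hyperplanes $\calH_{M_1}, \calH_{M_2}\in \calH_h$ meet in the interior of $\calB_9$ if and only if the associated $\rho$-invariant rank-$2$ sublattices $N_i:=M_i^{\perp_T}\subset T$ (both $\cong A_2$) satisfy $N_1+N_2\subset T$ negative definite. Since $R=T^{\perp_\Lambda}\cong E_6\oplus A_2$ is itself negative definite, this is equivalent to $M_1^{\perp_\Lambda}+M_2^{\perp_\Lambda}\subset\Lambda$ being negative definite. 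The strategy is to show that this sum being negative definite forces $M_1^{\perp_\Lambda}=M_2^{\perp_\Lambda}$, so that the two hyperplanes coincide.

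The central arithmetic input is that all $(-2)$-vectors of $\Lambda$ lie in the root sublattice $E_8^{\oplus 3}\subset\Lambda$, because the $U^{\oplus 2}$-summand has no roots. Consequently the root lattice $A_2\oplus E_8\cong M_i^{\perp_\Lambda}$ embeds as a root-sublattice of $E_8^{\oplus 3}$. By connectedness of the $E_8$ Dynkin diagram, the $E_8$-summand $E^{(i)}\subset M_i^{\perp_\Lambda}$ must coincide with one of the three $E_8$-summands of $\Lambda$; and by an analogous connectedness argument for $E_6$, together with the rank bound $\mathrm{rk}\, A_2=2<6=\mathrm{rk}\, E_6$, the $E_6$-summand of $R\subset M_i^{\perp_\Lambda}$ must sit inside $E^{(i)}$. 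Since $E_6\subset R$ itself lies in a unique $E_8$-summand $E_8^{(0)}\subset\Lambda$ (fixed by Lemma \ref{lemma1}), we conclude $E^{(1)}=E^{(2)}=E_8^{(0)}$.

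Writing $M_i^{\perp_\Lambda}=E_8^{(0)}\oplus A_2^{(i)}$ with $A_2^{(i)}\subset (E_8^{(0)})^{\perp_\Lambda}$, I would next verify that the $A_2$-summand of $R$ cannot sit inside $E_8^{(0)}$: otherwise $R\subset E_8^{(0)}$ would have index $3$, forcing $T=(E_8^{(0)})^{\perp_\Lambda}$ to be unimodular, contradicting $|\det T|=9$ from Lemma \ref{lemma1}. Thus the $A_2$-summand of $R$ lies in $(E_8^{(0)})^{\perp_\Lambda}$, and the inclusion $R\subset M_i^{\perp_\Lambda}$ forces it into $A_2^{(i)}$. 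A discriminant comparison (both $A_2^{(i)}$ and the $A_2$-summand of $R$ are rank-$2$ lattices of determinant $3$) then yields $A_2^{(i)}$ equal to the $A_2$-summand of $R$, independent of $i$. Hence $M_1^{\perp_\Lambda}=M_2^{\perp_\Lambda}$, contradicting $\calH_{M_1}\neq \calH_{M_2}$.

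The main technical obstacle is the rigidity argument identifying the $E_8$-summand of $M_i^{\perp_\Lambda}$ with the specific $E_8^{(0)}$ determined by $R$; this requires the precise embedding $R\hookrightarrow\Lambda$ provided by Lemma \ref{lemma1}, together with the observation that the $E_6$- and $A_2$-summands of $R$ necessarily lie in distinct $E_8$-summands of $\Lambda$ (verified via the discriminant-form computation noted above). Once this is established, the remaining steps are forced by connectedness of Dynkin diagrams and discriminant matching.
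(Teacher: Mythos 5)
Your proof hinges on the claim that ``all $(-2)$-vectors of $\Lambda = E_8^{\oplus 3}\oplus U^{\oplus 2}$ lie in the root sublattice $E_8^{\oplus 3}$, because the $U^{\oplus 2}$-summand has no roots.'' This is false on two counts. First, $U$ itself does contain roots: if $e,f$ are the standard isotropic generators of $U$ with $e\cdot f = 1$, then $(e-f)^2 = -2$. Second, and more fundamentally, even if the $U^{\oplus 2}$ summand were root-free, $\Lambda$ would still contain a vast supply of ``mixed'' roots $v_1+v_2$ with $v_1\in E_8^{\oplus 3}$, $v_1^2=-2$, and $v_2\in U^{\oplus 2}$ isotropic and nonzero. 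Indeed, $\Lambda\cong II_{2,26}$ has an infinite root system whose reflection group is governed by the Leech lattice; the $(-2)$-vectors are in no sense confined to the three $E_8$ blocks, and root sublattices of $\Lambda$ isomorphic to $E_8$ need not coincide with the given summands. Consequently, the chain of deductions that forces $E^{(1)}=E^{(2)}=E_8^{(0)}$ collapses, and the subsequent identification $M_1^{\perp_\Lambda}=M_2^{\perp_\Lambda}$ is not established. Relatedly, the assertion that the $E_6$-summand of $R$ ``lies in a unique $E_8$-summand fixed by Lemma \ref{lemma1}'' is not something that lemma provides: it only supplies a primitive embedding $T\hookrightarrow\Lambda$ with $T^\perp\cong E_6\oplus A_2$, without any compatibility with the direct-sum decomposition of $\Lambda$.

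The paper's argument avoids working with the indefinite lattice $\Lambda$ and its unwieldy root system altogether. It passes to the negative definite lattice $(M'')^\perp_\Lambda$ of rank $12$, where $M'' = M_1\cap M_2$ has signature $(2,14)$. Inside this rank-$12$ negative definite lattice one has two copies of $E_8\oplus A_2$ (namely $(M_i)^\perp_\Lambda$ for $i=1,2$), each containing the fixed sublattice $R\cong E_6\oplus A_2$; in particular one obtains two distinct $E_8$-extensions of the same $E_6$, which is impossible in a rank-$12$ negative definite lattice. The point is that definiteness makes the root system finite and the $E_8$'s split off as unimodular direct summands, so the contradiction is immediate — no global control over the root system of $\Lambda$ is needed. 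You should reformulate your argument in the negative definite complement $(M'')^\perp_\Lambda$ rather than attempting to locate $E_8$'s inside $\Lambda$ itself.
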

\begin{proof}
An intersection of two hyperplanes $H_i$, for $i=1,2$  from $\calH_h$ would correspond to  lattice embeddings $M''\subset M_i\subset T$, with $M_i\subset T$ giving the hyperplanes $H_i$, and with $M''$ of signature $(2,14)$.  Dually, we have $T^\perp_\Lambda\cong E_6\oplus A_2\subset (M_i)^\perp_\Lambda\cong E_8\oplus A_2\subset (M'')_\Lambda^\perp$ with $(M'')_\Lambda^\perp$ negative definite of rank $12$. Thus, $(M'')_\Lambda^\perp$ contains two different $E_8$ extensions of $E_6$, which is a clear contradiction of the fact that $(M'')_\Lambda^\perp$ is negative definite.
\end{proof}

\begin{remark}
Completely analogous results hold also for cubic threefolds. There, $\calH_0$ and $\calH_\infty$ correspond to the case $A_2\oplus E_6$ and $E_8$ respectively. In particular, the statements of the last paragraph of \S\ref{sectconst} have a clear arithmetic explanation.
\end{remark}
\subsection{The Baily--Borel compactification} Based on the discussion of the previous subsection, we can compute the Baily--Borel compactification of the ball quotient model. Similar computations in the case of $K3$ surfaces are done in \cite{scattone}.
\begin{theorem}\label{propbb}
The Baily-Borel compactification $(\calB_9/\Gamma)^*$
of Kondo's ball quotient model $\calB_9/\Gamma$ has three cusps labeled $c_{E_6^{\oplus 2}\oplus A_2^{\oplus 2}}$, $c_{E_6\oplus A_2\oplus E_8}$, and $c_{E_8^{\oplus 2}}$. The hyperelliptic divisor passes only through the cusp $c_{E_6\oplus A_2\oplus E_8}$.
\end{theorem}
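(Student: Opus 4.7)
The plan is to classify the cusps of $(\calB_9/\Gamma)^*$ as $\Gamma$-orbits of primitive isotropic $\calE$-lines in $T^\calE$, which correspond to $\rho$-invariant primitive isotropic rank-$2$ sublattices $I\subset T$.  The complete invariant distinguishing orbits is the negative-definite lattice $I^\perp_T/I$ together with its induced $\rho$-action, so the problem reduces to a lattice enumeration.  Following the strategy used throughout this section, I would pass to the overlattice $T\subset\Lambda=E_8^{\oplus 3}\oplus U^{\oplus 2}$ (Lemma \ref{lemma1}), with orthogonal complement $R=E_6\oplus A_2$.  Then $N:=I^\perp_\Lambda/I$ is an even unimodular negative-definite lattice of rank $24$---a Niemeier lattice---containing $R\oplus(I^\perp_T/I)$ as a finite-index sublattice, and the extended isometry $\rho$ descends to a fixed-point-free order-three automorphism of $N$.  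By Lemma \ref{lemma2} the root system $\Phi(N)$ decomposes as a $\rho$-stable direct sum of $A_2, D_4, E_6, E_8$ components.

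Inspecting the classification of the $24$ Niemeier lattices, the only ones with root systems of this restricted type are $N(A_2^{12})$, $N(D_4^6)$, $N(E_6^4)$, and $N(E_8^3)$; moreover the primitive embedding $R\hookrightarrow N$ requires an $E_6$ sub-root system of $\Phi(N)$, which rules out the first two.  I then enumerate orbits of primitive $R$-embeddings in the remaining two Niemeier lattices.
\begin{itemize}
\item In $N(E_6^4)$, up to symmetry there is essentially one embedding, with $E_6$ a full summand and $A_2$ a sub-root system of a second $E_6$-summand; the orthogonal complement has root system $E_6^{\oplus 2}\oplus A_2^{\oplus 2}$.
\item In $N(E_8^3)$, there are two non-equivalent embeddings: either $R\hookrightarrow E_8^{(1)}$ as the index-$3$ sublattice $E_6\oplus A_2\subset E_8$ (orthogonal complement $E_8^{\oplus 2}$), or $E_6$ and $A_2$ embedded primitively in distinct $E_8$-summands (orthogonal complement $E_6\oplus A_2\oplus E_8$).
\end{itemize}
Existence of a compatible fixed-point-free $\rho$ in each case follows from the standard $\calE$-structure on $E_8$ from Allcock's work on complex reflection groups, and distinctness of orbits is immediate from the root-system invariant.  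This yields the three cusps with the stated labels.

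For the hyperelliptic statement, I invoke Proposition \ref{hypdescr}: a hyperelliptic hyperplane comes from a primitive $M\subset T$ with $M^\perp_\Lambda\cong A_2\oplus E_8$, and the unique primitive embedding $R=E_6\oplus A_2\hookrightarrow A_2\oplus E_8$ has $E_6\hookrightarrow E_8$ primitively with $A_2$ the summand.  A hyperelliptic hyperplane passes through a cusp iff $M^\perp_\Lambda$ embeds in the corresponding Niemeier $N$ extending the given embedding of $R$, equivalently iff $N$ has a primitive $E_8$-sublattice containing the $E_6$-component of $R$ and orthogonal to the $A_2$-component of $R$.  Case by case: at $c_{E_6^{\oplus 2}\oplus A_2^{\oplus 2}}$, $N(E_6^4)$ has no $E_8$-sublattice at all, since any such would produce a connected rank-$8$ root subsystem, which is impossible across orthogonal $E_6$-summands; at $c_{E_8^{\oplus 2}}$, the $A_2$-summand of $R$ lies inside the same $E_8^{(1)}\subset N(E_8^3)$ as the $E_6$-component, so no $E_8$ orthogonal to $A_2(R)$ contains $E_6(R)$, and the alternative of gluing $E_6(R)$ with a rank-$2$ piece from another summand to an $E_8$ is precluded by the absence of inter-summand glue in $N(E_8^3)$; finally, at $c_{E_6\oplus A_2\oplus E_8}$, the sublattice $E_8^{(1)}\oplus A_2(R)\subset N(E_8^3)$ is isometric to $A_2\oplus E_8$ and realizes the hyperelliptic configuration.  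The main obstacle is the careful bookkeeping of primitivity and of the $\rho$-action through the Niemeier-lattice enumeration; once these are handled the classification of cusps and the behavior of the hyperelliptic divisor follow mechanically.
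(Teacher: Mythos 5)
Your proposal is correct and follows essentially the same route as the paper: classify cusps by the invariant $I^\perp_T/I$, pass to the overlattice $\Lambda$ so that $I^\perp_\Lambda/I$ is a Niemeier lattice, constrain the root system via Lemma \ref{lemma2}, enumerate embeddings of $R=E_6\oplus A_2$ into the surviving Niemeier lattices $N(E_6^4)$ and $N(E_8^3)$, and then use Proposition \ref{hypdescr} for the hyperelliptic statement. You are slightly more explicit than the paper in listing and eliminating the candidate Niemeier lattices and in the case-by-case hyperelliptic analysis, but the key lemmas, the decomposition, and the argument structure coincide.
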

\begin{proof}
The classification of cusps for the Baily--Borel compactification is equivalent to the classification of isotropic vectors in the lattice $T^\calE$. This is equivalent to the classification of rank $2$ isotropic sublattices $E$ of $T$ that are invariant with respect to $\rho$. A basic invariant of $E\subset T$ is the negative definite lattice $E^\perp_T/E$ (which comes endowed with fixed-point-free isometry).  In many case, the classification of   $E^\perp_T/E$ is equivalent to the classification of $E$.

The standard technique for studying $E^\perp_T/E$ (see \cite[\S5]{scattone}) is to use the embedding $T\subset \Lambda$ given by Lemma \ref{lemma1}. Namely, one shows that $E^\perp_\Lambda/E$ is unimodular, and thus it is one of the $24$ Niemeier  lattices (see \cite[\S18.4]{conway}). Then $E^\perp_T/E$ is a sublattice of $E^\perp_\Lambda/E$ with orthogonal complement $R=E_6\oplus A_2$. It is easy to classify the possible embeddings of $R$ into the unimodular lattice  $E^\perp_\Lambda/E$. Namely, in our situation, keeping track of the isometry $\rho$ and using Lemma \ref{lemma2}, we get $3$ cases for the embedding $R\subset E^\perp_\Lambda/E$:
\begin{itemize}
\item either (i) $E_6\oplus A_2\subset E_6^{\oplus 4}$ (\cite[p. 438, Case XIV]{conway});
\item or $E_6\oplus A_2\subset E_8^{\oplus 3}$ (\cite[p. 438, Case XV]{conway}) with two subcases:
\begin{itemize}
\item[(ii)] $E_6$ and $A_2$ embed in different copies of $E_8$,
\item[(iii)] $E_6$ and $A_2$ embed in the same copy of $E_8$.
\end{itemize}
\end{itemize}
We conclude that the root sublattice contained in $E^\perp_T/E$ is $E_6^{\oplus 2}\oplus A_2^{\oplus 2}$, $E_6\oplus A_2\oplus E_8$, and $E_8^{\oplus 2}$  for the cases (i), (ii), and (iii) respectively. It is easy to see that the associated root sublattices  actually determine the isometry class of $E^\perp_T/E$ (even as Eisenstein lattice). Finally, one checks that the invariant $E^\perp_T/E$ classifies the isotropic subspace $E\subset T$ (see \cite{scattone} for related examples). We conclude that there are three cusps in the Baily--Borel compactification and we label them by the root sublattice listed above.

Finally, if a hyperplane from $\calH_h$ passes through the cusp given by $E$, then we will have $E\subset M\subset T\subset \Lambda$ (with notation as in \eqref{auxeq}). Applying Prop. \ref{hypdescr}, this is clearly only possible in the case (ii) above.
\end{proof}

\begin{remark}
The same argument can be used to show that in the case of cubic threefolds, the Baily--Borel compactification $(\calB_{10}/\Gamma')^*$ has two cusps, which using the notation as above would be labeled $c'_{E_6^{\oplus 3}}$ and $c'_{E_8^{\oplus 2}\oplus A_2}$ (compare \cite[Thm. 3.10]{act} and \cite[Rem. 3.2]{ls}). By restricting to genus $4$ curves and passing to normalization (see Proposition \ref{compareballqt}), the cusp $c'_{E_8^{\oplus 2}\oplus A_2}$ separates into $c_{E_6\oplus A_2\oplus E_8}$, and $c_{E_8^{\oplus 2}}$.
\end{remark}

\subsection{The polarization of the ball quotient model $\calB_9/\Gamma$}\label{sectautomorphic}
As discussed above, in $(\calB/\Gamma)^*$ there are $3$ special  Heegner divisors $H_v=\calH_v/\Gamma$, $H_n=\calH_b/\Gamma$, and $H_h=\calH_h/\Gamma$. These are irreducible Weil divisors, but not $\bQ$-Cartier due the cusps of $(\calB/\Gamma)^*$. Here we establish that a certain linear combination of $H_v,H_n$, and $H_h$ gives a natural polarization on $(\calB/\Gamma)^*$. The technique of proving this is by now standard and it is based on Borcherds' construction of an automorphic form on the $26$ dimensional Type $IV$ domain $\calD_\Lambda$ (cf. \cite{borcherds0,bkpsb}). The closely related case of cubic threefolds is discussed in full detail in \cite[\S7.2]{cml}

 \begin{theorem}\label{thmballpol}
 The divisor $H_n+\frac{9}{2}H_v+14H_h$ is an ample $\bQ$-Cartier divisor on the ball quotient $(\calB/\Gamma)^*$.
 \end{theorem}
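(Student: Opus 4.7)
The plan is to construct, via the now-standard Borcherds quasi-pullback technique (cf.\ \cite{borcherds0,bkpsb}), a non-zero automorphic form $\Phi^{(\calB_9)}$ of positive weight on $\calB_9/\Gamma$ whose zero divisor is a positive rational multiple of $H_n+\tfrac{9}{2}H_v+14H_h$. Since any such form extends to a section of $\calL^{\otimes k}$ (for some $k>0$) on the Baily--Borel compactification $(\calB_9/\Gamma)^*$, where $\calL$ is the natural ample automorphic line bundle, this will simultaneously show that $H_n+\tfrac{9}{2}H_v+14H_h$ is $\bQ$-Cartier and $\bQ$-linearly equivalent to a positive multiple of $\calL$, hence ample.

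\textbf{Construction of the form.} Begin with Borcherds' distinguished automorphic form $\Phi_{12}$ of weight $12$ on the $26$-dimensional Type IV domain $\calD_\Lambda$ for $\Lambda=II_{2,26}$, whose divisor is $\sum_{\pm r,\, r^2=-2} r^\perp$ with multiplicity one along each Heegner hyperplane. Using the primitive embedding $T\subset\Lambda$ of Lemma \ref{lemma1} with orthogonal complement $R=E_6\oplus A_2$ (which contains $78$ roots, i.e.\ $39$ pairs $\pm r$), the form vanishes identically on $\calD_T$ to order $39$; dividing by the product of the $39$ corresponding linear forms yields the quasi-pullback $\Phi^{(T)}$, a non-zero automorphic form of weight $12+39=51$ on $\calD_T$. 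Since the fixed-point-free order-three isometry $\rho$ of Lemma \ref{lemma1} preserves both $T$ and $R$, the form $\Phi^{(T)}$ restricts non-trivially to the totally geodesic subdomain $\calB_9\subset\calD_T$, producing the desired $\Gamma$-invariant automorphic form $\Phi^{(\calB_9)}$ on the ball, of weight $51$.

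\textbf{Divisor computation.} By construction, the zeros of $\Phi^{(\calB_9)}$ come only from Heegner hyperplanes on $\calD_\Lambda$ associated to $(-2)$-vectors $r\in\Lambda\setminus R$; by Proposition \ref{hypdescr} the resulting ball hyperplanes are precisely those in $\calH_v\cup\calH_n\cup\calH_h$, so $\divisor(\Phi^{(\calB_9)})=aH_n+bH_v+cH_h$ for some $a,b,c\in\bQ_{>0}$. To determine these coefficients, one counts, for each of the three isometry types $M^\perp_\Lambda\cong D_4\oplus E_6$, $A_2\oplus A_2\oplus E_6$, and $A_2\oplus E_8$ from Proposition \ref{hypdescr}, the $(-2)$-vectors in $M^\perp_\Lambda$ lying outside $R$, organizes them into $\rho$-orbits of size three, and separates the ``vertical'' roots (those in $M^\perp_T\subset T$, spanning the Eisenstein line cut out by the ball hyperplane) from the ``horizontal'' ones. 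This is the precise analogue of the cubic threefold computation carried out in \cite[\S7.2]{cml}, and yields the ratio $b:a:c=\tfrac{9}{2}:1:14$.

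\textbf{Main obstacle.} The technical heart of the proof is this final bookkeeping. Two sources of subtlety must be handled simultaneously: first, the fixed-point-free order-three action of $\rho$ on $\Lambda$ groups $(-2)$-vectors into orbits of size three, and a short calculation shows that $v^\perp$, $(\rho v)^\perp$, and $(\rho^2 v)^\perp$ all cut out the same hyperplane on $\calB_9\subset\calD_T$ (since $\calB_9$ is identified with a $\rho$-eigenspace of $T_\bC$), so the ball multiplicity of each ball hyperplane is three times the multiplicity of any single representative Heegner divisor on $\calD_T$; second, because a ball hyperplane corresponds to a rank-one Eisenstein sublattice of $T^\calE$ (equivalently, a rank-two $\rho$-invariant sublattice $M^\perp_T\subset T$), the enumeration of the relevant $(-2)$-vectors in $M^\perp_\Lambda$ requires a case-by-case analysis distinguishing vertical contributions (roots in $M^\perp_T$ itself) from horizontal ones. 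Once the coefficients are verified to occur in the ratio $9/2:1:14$, the ampleness of $\calL$ on $(\calB_9/\Gamma)^*$ transfers to $H_n+\tfrac{9}{2}H_v+14H_h$, completing the proof.
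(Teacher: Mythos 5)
Your proposal takes the same route as the paper: quasi-pullback of Borcherds' weight-$12$ form $\Phi_{12}$ on $\calD_\Lambda$ along the embedding $T\subset\Lambda$ of Lemma \ref{lemma1}, restriction of the resulting form on $\calD_T$ to the ball $\calB_9$, and a root count organized via Proposition \ref{hypdescr}. The setup and the citations (\cite{borcherds0}, \cite{bkpsb}, \cite{cml}) match what the paper does. The one thing you leave genuinely unresolved is the bookkeeping that you flag as the ``main obstacle,'' and here the paper is cleaner and more explicit than your sketch suggests.

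Specifically, the paper computes the vanishing order of the quasi-pullback $\overline\psi$ on $\calD_T$ along each Heegner locus by the single formula $\tfrac{1}{2}\bigl(\#\text{roots}(M_\Lambda^\perp)-\#\text{roots}(R)\bigr)$, which already sums over the entire $\rho$-orbit of roots cutting out a given ball hyperplane; this yields $3$, $9$, and $84$ for $H_n$, $H_v$, $H_h$. The passage to $(\calB_9/\Gamma)^*$ then divides by the \emph{non-uniform} ramification orders $3$, $2$, $6$ of the quotient map along the three Heegner divisors, which the paper imports from \cite[Lem.~1.4, Lem.~3.3]{act}, giving $1$, $\tfrac{9}{2}$, $14$. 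Your ``uniform factor of $3$ from $\rho$-orbits of size three'' is therefore either redundant (it is already built into the root-count formula) or, if intended as a separate correction on top of the paper's $\calD_T$-level orders, incompatible with the ramification orders $3,2,6$, which are not all equal to $3$. The ``vertical vs.\ horizontal roots'' distinction you invoke is a reasonable way to derive those ramification orders from scratch, but you neither state them nor verify that the counts close up; the paper instead cites \cite{act} for them directly. In short: same method, but you should state the orders $3,9,84$ and the ramification factors $3,2,6$ explicitly and check they give $1:\tfrac{9}{2}:14$, rather than appealing to the analogy with \cite{cml} at the crucial step.
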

 \begin{proof}
 We consider Borcherds' automorphic form $\widetilde \psi$ of weight $12$ on $\calD_\Lambda$ that vanishes with order $1$ on all hyperplanes orthogonal to $(-2)$-classes in $\Lambda$. As explained in \cite{bkpsb},  the automorphic form $\widetilde{\psi}$ induces an automorphic form $\overline{\psi}$ on  $\calD_T$. Then $\overline{\psi}$ induces an automorphic form $\psi$ on $\calB_{T^\calE}$, whose associated divisor is the divisor mentioned in the theorem.  The vanishing order of $\overline{\psi}$ along the loci $H_n,H_v,H_h$ at the level of $\calD_T$ is equal to half the number of roots in $M_\Lambda^\perp$ minus the number of roots in $R$ (see the notation from \eqref{auxeq2}, Prop. \ref{hypdescr}). Thus, we obtain orders $\frac{1}{2}\#\textrm{roots}(A_2)=2$,  $\frac{1}{2}\left(\#\textrm{roots}(D_4)-\#\textrm{roots}(A_2)\right)=9$, and $\frac{1}{2}\left(\#\textrm{roots}(E_8)-\#\textrm{roots}(E_6)\right)=84$ for $\overline{\psi}$ on $H_n,H_v,H_h$ respectively.
Finally, to compute $\divisor(\psi)$ at the level of $(\calB/\Gamma)^*$ one needs to take into account the action of the monodromy group $\Gamma$, and divide by the corresponding ramification orders: $3$, $2$, $6$ respectively (compare \cite[Lem 1.4, Lem. 3.3]{act}).
 \end{proof}

\section{The comparison of the GIT and ball quotient models}\label{sectcompare}
In this section, we show that the GIT quotient $\overline{M}_4^{GIT}$ considered in section \ref{sectgit} is closely related to Kondo's ball quotient model $B_9/\Gamma$. The precise relationship is given by Thm. \ref{mainthm}, which follows from the theory of Looijenga \cite{looijengacompact1} (reviewed in \S\ref{sectl} below), the properties of the Kondo's period map (induced by those of the period map for $K3$s), and the computation of polarizations for $\overline{M}_4^{GIT}$ and $B_9/\Gamma$ (see Proposition \ref{GITIdentification} and Theorem  \ref{thmballpol} respectively). We close with a discussion of some of consequences of Thm. \ref{mainthm}.

\subsection{A review of Looijenga's theory}\label{sectl}
Looijenga \cite{looijengacompact1,looijengacompact} developed a framework to compare GIT quotients with arithmetic quotients of Type $I_{1,n}$ (complex balls) and Type $IV$ domains. This framework  was successfully applied in several situations arising from geometry (e.g. cubic threefolds \cite{act,ls}, and genus $3$ curves \cite{l2}). Here we briefly recall the basic ingredients:
\begin{itemize}
\item[(1)] Assume we are given $M$, an open subset of a projective variety $\overline{M}$, and  $L$  an ample $\bQ$-Cartier divisor on $\overline{M}$. Typically, $\overline{M}$ will be a GIT quotient, and $M$ the open subset corresponding to the stable points.
\item[(2)] Let $\calD$ be a complex ball or a Type $IV$ domain, $\Gamma$ an arithmetic group acting on $\calD$, and  $\calH$ a $\Gamma$-invariant arithmetic arrangement of hyperplanes in $\calD$ (see \cite[Ex. 1.8, 1.9]{looijengacompact1}). Assume we are given a morphism
$$\Phi: M\to \calD/\Gamma$$
such that
\begin{itemize}
\item[i)] $\Phi:M\to \calD/\Gamma$ is injective;
\item[ii)] $\im(\Phi)=(\calD\setminus \calH)/\Gamma$ (the complement of an arithmetic hyperplane arrangement).
\end{itemize}
In practice, $\Phi$ is a period map and the two conditions  above follow from a Torelli type theorem and a properness result for the period map (see also \cite{ls2} for a related discussion). We note that while the period map is a priori defined in the analytic category, in our situation, the algebraicity follows from Baily--Borel theorem (i.e. $\calD/\Gamma$ is quasi-projective, with projective compactification $(\calD/\Gamma)^*$) and Borel's extension theorem (i.e. $\Phi$ is an algebraic morphism).
\item[(3)]  $(\calD/\Gamma)^*$ has a natural polarization $L'$ (given by the Baily--Borel construction) and $\overline{M}$ carries a polarization $L$ by assumption. One requires
\begin{equation}\label{linagree}
(L)_{\mid M}=(L')_{(\calD\setminus \calH)/\Gamma};
\end{equation}
i.e. the two polarizations agree on the common open subsets.
\item[(4)] Finally, assume that $\overline{M}\setminus M$ has codimension at least $2$ in $\overline{M}$ and that any intersection of  hyperplanes from $\calH$ has dimension at least $1$. This is a mild condition in practice.
\end{itemize}
If these assumptions are satisfied, Looijenga has constructed an explicit birational modification $\overline{\calD/\Gamma}_\calH$ of $(\calD/\Gamma)^*$, which leaves   $(\calD\setminus \calH)/\Gamma$ unchanged, such that
\begin{equation}\label{lthm}
\overline{M}\cong \overline{\calD/\Gamma}_\calH \end{equation}
(see \cite[Thm. 7.1]{looijengacompact1}).

The space $\overline{\calD/\Gamma}_\calH$ can be described, in analogy with the Baily--Borel compactification, as the $\Proj$ of the ring of meromorphic forms with poles along $\calH/\Gamma$. More explicitly, $\overline{\calD/\Gamma}_\calH$ is constructed in the following three steps:
\begin{itemize}
\item[(A)] First, construct a small blow-up $\widehat{\calD/\Gamma}_\calH$ of the boundary of $(\calD/\Gamma)^*$ such that the Weil divisor $\calH/\Gamma$ becomes $\bQ$-Cartier (see \cite[\S4.3, and Lem. 5.2]{looijengacompact1}).
\item[(B)] Then, consider a blow-up $\widetilde{\calD/\Gamma}_\calH$  of $\widehat{\calD/\Gamma}_\calH$  such that the hyperplane arrangement becomes normal crossings (in an orbifold sense; see \cite[\S2.2]{looijengacompact}).
\item[(C)] Finally, $\overline{\calD/\Gamma}_\calH$ is obtained by contracting the hyperplane arrangement resulting from (B) in the opposite direction (see \cite[\S3, Thm. 3.9, and Thm. 5.7]{looijengacompact1})
\end{itemize}
The net effect of the birational modification $\overline{\calD/\Gamma}_\calH$ of $(\calD/\Gamma)^*$ is to replace intersection strata from $\calH$ with strata of complementary dimension.  In particular, if the hyperplanes from $\calH$ do not intersect in $\calD$, $\overline{\calD/\Gamma}_\calH$ is essentially the contraction of the divisor $\calH/\Gamma$. More precisely, after a small modification $\widehat{\calD/\Gamma}_\calH$ of the Baily--Borel boundary (cf. step (A)),  the divisor $\calH/\Gamma$ becomes $\bQ$-Cartier and can be contracted to a point.

\begin{remark} The arrangement $\calH$ can be empty, in which case   $\overline{\calD/\Gamma}_\calH\cong (\calD/\Gamma)^*$. A geometric situation when this happens is the case of cubic surfaces: the GIT quotient for cubic surfaces is isomorphic to the Baily--Borel compactification of a ball quotient (see \cite{act0}).
\end{remark}

\subsection{The main result for genus $4$ curves} At this point, we can state our main result that compares Kondo's ball quotient model for $\overline{M}_4$ to the GIT quotient $\overline{M}_4^{GIT}$. As already mentioned, this result has close analogues in the case of the moduli space of cubic threefolds (see \cite{ls} and \cite{act}) and genus $3$ curves (see \cite{l2}, \cite{artebani}).

\begin{theorem}\label{mainthm}
The GIT quotient $\overline{M}_{4}^{GIT}$  for canonical genus $4$ curves is isomorphic to Looijenga's modification of Kondo's ball quotient model $\calB_9/\Gamma$ associated to the hyperelliptic hyperplane arrangement $\calH_h$:
\begin{equation}\label{mainisom}
\overline{M}_{4}^{GIT}\cong \overline{\calB_9/\Gamma}_{\calH_h}.
\end{equation}
More explicitly, there exists a diagram
\begin{equation}
 \xymatrix{
&&\widehat{M}_{4} \ar@{->}[ld]\ar@{->}[rd]&&\\
&\overline{M}_{4}^{GIT}\ar@{-->}[rr]&&(\calB_{9}/\Gamma)^* }
\end{equation}
such that
\begin{itemize}
\item[i)]  $\widehat{M}_{4}$ is a small blow-up of $(\calB_{9}/\Gamma)^*$, which replaces the cusp $c_{E_6\oplus A_2\oplus E_8}$ in the Baily-Borel compactification $(\calB_{9}/\Gamma)^*$ by a rational curve;
\item[ii)] $\widehat{M}_{4}\to\overline{M}_{4}^{GIT} $ contracts the strict transform of the hyperelliptic divisor $\calH_h/\Gamma$ to the point corresponding to  the double twisted cubic (see Thm. \ref{thmcubic}, Rem. \ref{remcubic}).
\end{itemize}
\end{theorem}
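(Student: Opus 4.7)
The plan is to apply Looijenga's comparison framework from \S\ref{sectl}, with the identifications $\overline M = \overline M_4^{GIT}$, $\calD = \calB_9$, $\Gamma$ = Kondo's monodromy group, and $\calH = \calH_h$. The four conditions of \S\ref{sectl} require: a period map $\Phi\colon M \to \calB_9/\Gamma$ that is injective with image $(\calB_9\setminus \calH_h)/\Gamma$; agreement of the two polarizations on this common open subset; that $\overline M \setminus M$ has codimension $\ge 2$; and compatible intersection behavior for the arrangement. Condition (4) simplifies drastically here: by Corollary \ref{noint} the hyperplanes of $\calH_h$ do not meet in the interior of $\calB_9$, so the Looijenga modification reduces to a small modification at those cusps through which $\calH_h$ passes followed by a single contraction of the (by then $\bQ$-Cartier) Heegner divisor $\calH_h/\Gamma$.

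To construct $\Phi$ on the GIT-stable locus $M \subset \overline M_4^{GIT}$, I would start from Kondo's isomorphism on $M_4^{ns}$ (Theorem \ref{thmkondo}) and extend over the loci $V$ and the generic point of $\Sigma$ using the extensions provided there. By Theorem \ref{thmcubic}, every stable point of $\overline M_4^{GIT}$ is a $(2,3)$-complete intersection with at worst $A_4$ singularities lying on a quadric of rank $\ge 3$, so the associated triple cover $K3$ surface has at worst ADE singularities. Properness of the $K3$ period map together with the $K3$ Torelli theorem will then extend $\Phi$ across $M$ as an injective morphism. The image is $(\calB_9\setminus\calH_h)/\Gamma$ because $\calH_h/\Gamma$ parametrizes hyperelliptic curves (and their degenerations), which are not GIT-stable in $\overline M_4^{GIT}$: they correspond to ribbons, which are strictly semistable. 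Codimension $\ge 2$ of $\overline M_4^{GIT}\setminus M$ is immediate from Theorem \ref{thmcubic}, which identifies the boundary as two isolated points and the rational curve $Z$.

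The polarization comparison combines Propositions \ref{GITIdentification} and \ref{computeclasses} with Theorem \ref{thmballpol}. On the GIT side the polarization is $\calO(3,2) = 3\eta+2h$, and one reads off from Proposition \ref{computeclasses} the identity $\Sigma + \frac{9}{2}V = 33\eta+34h+18\eta = 17(3\eta+2h)$ in $\Pic(\bP E)_{\bQ}$. Under $\Phi$ the divisor $\Sigma$ pulls back from $H_n$ and $V$ from $H_v$ (up to positive rational multiples reflecting the monodromy ramification along these Heegner divisors). Theorem \ref{thmballpol} asserts that the Baily--Borel polarization is a positive multiple of $H_n + \frac{9}{2}H_v + 14 H_h$, which restricts on $(\calB_9\setminus\calH_h)/\Gamma$ to a positive multiple of $H_n + \frac{9}{2}H_v$, matching the GIT polarization up to a positive rational scalar. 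This establishes \eqref{linagree}.

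With all four conditions verified, \cite[Thm.~7.1]{looijengacompact1} yields the isomorphism \eqref{mainisom}. For the explicit diagram, note that Step (B) of Looijenga's construction is trivial in the interior by Corollary \ref{noint}, so only Step (A) (making $\calH_h$ Cartier at the cusps) and Step (C) (contracting it) are substantive. By Theorem \ref{propbb}, $H_h$ passes only through the cusp $c_{E_6\oplus A_2\oplus E_8}$; Step (A) therefore produces $\widehat M_4 \to (\calB_9/\Gamma)^*$ as a small blow-up replacing this single cusp with a $\bP^1$ parametrizing the local directions of $H_h$ into the cusp, which proves (i). Step (C) then contracts the proper transform of $H_h/\Gamma$ to a single point; this point must lie on the image of the exceptional $\bP^1$, which by the polarization computation maps onto the rational curve $Z$ of semistable orbits $C_{A,B}$ in $\overline M_4^{GIT}$. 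Since ribbons on a rational normal curve are the canonical limits of families of smooth curves degenerating to hyperelliptic curves, this contracted point is necessarily the double twisted cubic orbit (the $4A/B^2=1$ point of $Z$; see Remark \ref{remcubic}), proving (ii). The main obstacle I expect is the precise bookkeeping in the polarization comparison---namely, tracking the monodromy ramification scalars relating $\Sigma$ to $H_n$ and $V$ to $H_v$ so that the coefficient ratio $9{:}2$ comes out on the nose, rather than merely up to an overall rational factor.
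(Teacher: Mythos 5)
Your proposal follows the same route as the paper: identify the GIT-stable locus $M$ via Theorem \ref{thmcubic}, observe that the associated triple covers have only $ADE$ singularities so Kondo's period map extends over $M$, use Torelli plus surjectivity of the $K3$ period map to get $M\cong(\calB_9\setminus\calH_h)/\Gamma$, match the polarizations via the $9\lambda-\delta$ identity, and then invoke \cite[Thm.~7.1]{looijengacompact1} together with Corollary \ref{noint} and Theorem \ref{propbb} to get the explicit diagram. Your direct check in $\Pic(\bP E)$ that $\Sigma+\frac{9}{2}V=17(3\eta+2h)$ is correct and is just a reshuffling of the paper's statement that both sides are proportional to $9\lambda-\delta$. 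One small point: the worry at the end about ``monodromy ramification scalars'' relating $\Sigma$ to $H_n$ and $V$ to $H_v$ is not a real obstacle --- the ramification has already been absorbed into the statement of Theorem \ref{thmballpol}, which expresses the Baily--Borel polarization directly in terms of the Heegner divisors $H_n,H_v,H_h$ on the quotient; since $\Phi$ is an isomorphism onto its image at the generic points of $\Sigma$ and $V$ (by Theorem \ref{thmkondo}), the pullbacks $\Phi^*H_n=\Sigma$ and $\Phi^*H_v=V$ hold on the nose, with no extra scalar. Likewise, the claim that the image of $\Phi$ misses $\calH_h$ is most cleanly justified, as in the paper, by noting that $\calH_h$ parametrizes unigonal $K3$s, which cannot arise as triple covers of a quadric surface branched along a $(2,3)$-curve --- rather than by the somewhat indirect appeal to ribbons being strictly semistable.
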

\begin{proof}
According to Thm. \ref{chowcubic}, a description of the GIT quotient is given by Thm. \ref{thmcubic}. We define  $M\subset \overline{M}_{4}^{GIT}$ to be the subset corresponding to  stable genus $4$ curves. From Thm. \ref{thmcubic}, $M$ parameterizes curves $C$ which are $(2,3)$-complete intersections with the following two properties:
\begin{itemize}
\item the unique quadric $Q$ containing $C$ is either smooth or a quadric cone;
\item the possible singularities of $C$ are of type $A_1,\dots, A_4$ at a smooth point of $Q$ or $A_1, A_2$ at the vertex of $Q$ (in the cone case).
\end{itemize}
The key observation now is that the surface obtained as the triple cover $X$ of $Q$ branched along $C$ has at worst $ADE$ singularities. For instance, $p\in C$ is a smooth point of $Q$, and a singular point of type $A_1,\dots,A_4$ for $C$, the the associated singularity of $X$ is of type $A_2, D_4, E_6,E_8$ respectively; explicitly, a local equation of type $A_4: x^2+y^5$, becomes a local equation of type $E_8:x^2+y^5+z^3$ after a triple cover. It follows that Kondo's construction can be extended over all of $M$. Thus, there is a period map
$$\Phi:M\to \calB_9/\Gamma.$$
An easy application of the Torelli theorem for $K3$ surfaces, gives that $\Phi$ is an embedding. Using basic facts about linear systems on $K3$s, it is immediate to see (by the same argument as in \cite[p. 393-394]{k2}) that $\im(\Phi)$ misses the hyperplane arrangement $\calH_h$, corresponding to the so called unigonal surfaces. Finally, using the
 surjectivity of the period map for $K3$ surfaces, we conclude
\begin{equation}\label{temp}
M\cong (\calB_9\setminus \calH_h)/\Gamma
\end{equation}
 (using the same arguments as in \cite[Proof of Thm. 1]{k2}; for a very similar situation see also \cite[Prop. 4.2]{l2}).

We are now in the situation described in \S\ref{sectl}. To conclude, we need in addition to the isomorphism \eqref{temp}, an identification of the polarizations. Via the birational map $\overline{M}_4^{GIT}\dashrightarrow (\calB_9/\Gamma)^*$ it is clear that the vanishing theta divisor $V$ and the discriminant divisor $\Sigma$ in $\overline{M}_4^{GIT}$ correspond to the Heegner divisors $H_v$ and $H_n$ respectively (the period map being an isomorphism at the generic points of those divisors, compare Thm. \ref{thmkondo}). From  Thm. \ref{thmballpol},  we obtain that the natural polarization $L'$ on $(B_9/\Gamma)^*$ satisfies
$$(L')_{(\calB_9/\Gamma)^*}=\Sigma+\frac{9}{2} V\sim 9\lambda-\delta \mod H_h$$
On the other hand, from Prop. \ref{computeclasses}, the linearization on $\overline{M}_4^{GIT}$ is again  proportional to $9\lambda-\delta$. It follows that on the open subset $(\calB_9\setminus \calH_h)/\Gamma$ the two linearizations agree.
Also, in our situation, the codimension conditions (cf. \S\ref{sectl}(4)) are trivially satisfied. Thus, the theorem follows from \cite[Thm. 7.1]{looijengacompact1} (see also \eqref{lthm} above).

Finally, the explicit description is a consequence of the general construction of $\overline{\calB_9/\Gamma}_{\calH_h}$ and the second part of Theorem \ref{propbb}. \end{proof}

We close by briefly discussing the geometric meaning of Theorem \ref{mainthm}.  The theorem says that ball quotient model and the GIT quotient of the Chow variety agree on the locus of stable points, i.e. $(2,3)$-complete intersections with mild singularities (up to $A_4$, see Thm. \ref{thmcubic}). At the boundary, the Baily--Borel compactification $(\calB_9/\Gamma)^*$  almost agrees with the GIT compactification $\overline{M}_4^{GIT}$. Specifically, the boundary of $(\calB_9/\Gamma)^*$  consists of three points, the cusps $c_{E_6^{\oplus 2}\oplus A_2^{\oplus 2}}$, $c_{E_6\oplus A_2\oplus E_8}$, and $c_{E_8^{\oplus 2}}$ (cf. Thm. \ref{propbb}). On the other hand, the GIT boundary consists of two points corresponding to the orbits of the curves $C_{D}$ and $C_{2A_5}$ and a $1$-dimensional boundary component corresponding to the orbits of curves $C_{A,B}$ (cf. Thm. \ref{thmcubic}). A standard computation with limit Hodge structures (see \cite{act}, \cite[Rem. 5.6]{friedman} for related computations), based on the fact that $D_4$ and $A_5$ give, via the triple
 cover, the boundary singularities $\widetilde{E}_6$ and $\widetilde{E}_8$ respectively,  allows us to match the boundary components as follows:
\begin{eqnarray*}
\overline{M}_4^{GIT}&\dashrightarrow& (\calB_9/\Gamma)^*\\
C_{D}&\to&c_{E_6^{\oplus 2}\oplus A_2^{\oplus 2}}\\
C_{2A_5}&\to&c_{E_8^{\oplus 2}}\\
C_{A,B}&\to&c_{E_6^{\oplus 2}\oplus A_2^{\oplus 2}}.
\end{eqnarray*}
The full strength of Theorem \ref{mainthm} says that in fact the period map extends to an isomorphism at the boundary points corresponding to $C_{D}$ and $C_{2A_5}$. Furthermore, the period map extends along the boundary curve $Z\subset \overline{M}_4^{GIT}$ parameterizing the curves $C_{A,B}$, except at the point $o\in Z$ corresponding to the orbit of the ribbon curve. The point $o$ is blown-up to introduce the hyperelliptic divisor $H_h=\calH_h/\Gamma$. Finally, the strict transform of the curve $Z$ is contracted to the cusp $c_{E_6^{\oplus 2}\oplus A_2^{\oplus 2}}$.

\bibliography{g4bib}
\end{document}